\title[Blowing up the power of a singular cardinal with collapses]{Blowing up the power of a singular cardinal of uncountable cofinality with collapses}
\author{Sittinon Jirattikansakul}
\date{\today}
\DeclareMathOperator{\dom}{dom}
\DeclareMathOperator{\rge}{rge}
\DeclareMathOperator{\cf}{cf}
\DeclareMathOperator{\OB}{OB}
\DeclareMathOperator{\ot}{ot}
\DeclareMathOperator{\mc}{mc}
\DeclareMathOperator{\ran}{ran}
\DeclareMathOperator{\supp}{supp}
\DeclareMathOperator{\Col}{Col}
\DeclareMathOperator{\Ult}{Ult}
\newtheorem{lemma}{Lemma}
\newtheorem{thm}{Theorem}
\newtheorem{claim}{Claim}[thm]
\theoremstyle{definition}
\newtheorem{defn}[thm]{Definition}
\newtheorem{rmk}[thm]{Remark}
\newenvironment{claimproof}[1]{\par\noindent\underline{Proof:}\space#1}{\hfill $\blacksquare$}
\begin{document}
\begin{abstract}

The {\em Singular Cardinal Hypothesis} (SCH) is one of the most classical combinatorial principles in set theory. It says that if $\kappa$ is singular strong limit,
  then $2^{\kappa}=\kappa^+$. We prove that given a singular cardinal $\kappa$ of {\em cofinality}  $\eta$ in the ground model,
  where $\kappa$ is a limit of suitable large cardinals, then there is a forcing extension which preserves cardinals and cofinalities up to and including $\eta$, such that if $\eta=\aleph_\gamma$, then $\kappa$ becomes $\aleph_{\gamma+\eta}$, and SCH fails at $\kappa$. 
  Furthermore, if $\eta$ is not an $\aleph$-fixed point, then in our model, SCH fails at $\aleph_{\eta}$.
  Our large cardinal assumption is the existence of a sequence of strong cardinals of length $\eta$. In our model we also obtain a very good scale.

  %Recently, Gitik \cite{Gitikblow} presented a forcing construction to blow up the power set of a singular cardinal
  %with any cofinality, assuming that the singular cardinal is a limit of suitable large cardinals.
  %With a modification to the forcing, partly inspired by Gitik's recent preprint \cite{Gitikcol},
  %we obtain failures of the SCH at small singular cardinals of uncountable cofinality which are singular in the core model.

\end{abstract}
\maketitle

\section{introduction}

 The {\em Singular Cardinal Hypothesis (SCH)} says, roughly speaking, that if $\theta$ is a singular cardinal then
 $2^\theta$ has the smallest value consistent with the restrictions provable in ZFC: 
in particular, SCH implies that if $\theta$ is a singular strong limit cardinal, then $2^\theta=\theta^+$.
Failure of SCH is known to require some large cardinal hypotheses.

The first models for the failure of SCH were built by starting with a large cardinal $\kappa$,
  forcing that GCH fails at $\kappa$ while preserving some large cardinal property of $\kappa$, and then
  making $\kappa$ singular via Prikry-type forcing. Gitik and Magidor \cite{SCHrevisited} gave a construction
  which starts with a large cardinal and simultaneously adds many cofinal $\omega$-sequences in $\kappa$, while
  keeping it strong limit.

In subsequent work, Gitik and Magidor \cite{EBF} gave a construction which starts with  
a cardinal $\kappa$ that is the limit of $\omega$ many strong cardinals, and adds many cofinal subsets
of $\kappa$ while keeping $\kappa$ strong limit. Recently, Gitik \cite{Gitikblow} showed how to achieve a similar result
when $\kappa$ is a limit of large cardinals and has uncountable cofinality. In the resulting extension $\kappa$
is quite large, for example, it is a cardinal fixed point. 

   We prove the following theorem:

\begin{thm}\label{main}
 Assume GCH holds, and there is an increasing sequence of cardinals
    $\langle \kappa_\alpha : \alpha < \eta \rangle$ and an integer $n \geq 2$ such that,
 letting  $\rho=\sup_{\alpha<\eta}\kappa_\alpha$ and  $\lambda=\rho^{+n}$: 
\begin{enumerate}
\item $0<\eta<\kappa_0$ and $\eta$ is a regular cardinal. 
\item There is  a sequence of $(\kappa_\alpha, \lambda)$-extenders $\langle E_\alpha : \alpha< \eta \rangle$
  such that: 
\begin{enumerate}
\item If $M_\alpha= \Ult(V, E_\alpha)$, then  $M_\alpha$ computes cardinals correctly up to and including $\lambda$,
  and ${}^{\kappa_\alpha} M_\alpha \subseteq M_\alpha$.
\item If $j_{E_\alpha} : V \rightarrow M_\alpha$ is the ultrapower map, then 
 there is $h_\alpha^\beta:\kappa_\alpha \to V_{\kappa_\alpha}$ such that $j_{E_\alpha}(h_\alpha^\beta)(\kappa_\alpha)=E_\beta$ for $\beta<\alpha$.
 \item There is a function $s_\alpha: \kappa_\alpha \to \kappa_\alpha$ with $j_{E_\alpha}(s_\alpha)(\kappa_\alpha)=\sup_{\alpha<\eta}\kappa_\alpha$.
\end{enumerate}
\end{enumerate}
Let $\eta^+=\aleph_{\gamma}$. Then there is a $\rho^{++}$-c.c.~forcing poset which preserves cardinals and cofinalities up to and including $\eta$, such that in the generic extension, $\sup\limits_{\alpha<\eta}\kappa_\alpha=\aleph_{\gamma+\eta}$, $2^{\aleph_{\gamma+\beta}} \geq \aleph_{\gamma+\beta}^{+n}$ for each limit ordinal $\beta < \eta$, and $2^{\aleph_{\gamma+\eta}}=\aleph_{\gamma+\eta}^{+n}$.  
\end{thm}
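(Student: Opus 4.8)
The plan is to force with a diagonal, extender-based Prikry-type poset for the regular uncountable cofinality $\eta$, into which Levy collapses are interleaved so as to compress the cardinals lying between the successive $\kappa_\alpha$ and thereby pin down the exact $\aleph$-position of $\rho$. Note that $\rho$ is already singular of cofinality $\eta$ in $V$, so the role of the forcing is not to singularize $\rho$ but to blow up $2^\rho$ while reindexing the cardinals below it. A condition will carry, at each coordinate $\alpha$ in its support, either a \emph{decided} stem recording generators of $E_\alpha$ already chosen for the generic together with the collapse data below the relevant cardinal, or a \emph{pure} part consisting of a large ($E_\alpha$-one) set from which future points are to be drawn. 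The coherence hypothesis $j_{E_\beta}(E_\alpha)\restriction\lambda=E_\alpha$ for $\alpha<\beta$ is exactly what lets the extenders be lined up along the $\eta$-sequence as a coherent system, so that the chosen generators track through the ultrapowers and, using the functions $s_\alpha$ with $j_{E_\alpha}(s_\alpha)(\kappa_\alpha)=\rho$, the generic sequence remains cofinal in $\rho$ of order type $\eta$. Because $\eta<\kappa_0$ is regular, the closure of the pure order and the chosen support regime are set up precisely to preserve $\eta$ and everything below it.

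First I would isolate the building blocks: for each $\alpha$, the extender-based component attached to $E_\alpha$, whose $\lambda$ generators add many new subsets of $\kappa_\alpha$, paired with a Levy collapse designed to turn the block of cardinals associated with $\kappa_\alpha$ into a short run of consecutive alephs and to collapse the gap up to $\kappa_{\alpha+1}$. These are then assembled into a single poset, ordered by the Prikry (end-extension-of-stems plus shrinking-of-large-sets) order refining the direct order. Next I would establish the Prikry property: every statement of the forcing language is decided by a direct extension. This is the technical heart, and the presence of the collapses is what makes it delicate, since a collapse has no Prikry property in isolation; the remedy is a strong Prikry / fusion argument in which the collapse below the least active coordinate is first absorbed into the stem --- it is small relative to the next extender, being below its critical point --- after which the usual extender-based diagonal-intersection argument is run on the pure parts. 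The closure ${}^{\kappa_\alpha}M_\alpha\subseteq M_\alpha$ of the ultrapowers supplies exactly the reflection needed to locate the requisite $E_\alpha$-one sets inside the models and to carry out the fusion.

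For the cardinal arithmetic I would proceed as follows. A $\Delta$-system argument on the stems, using GCH in $V$, yields the $\rho^{++}$-c.c., hence preservation of all cardinals $\geq\rho^{++}$; in particular the successors $\rho^{+2},\dots,\rho^{+n}$ survive. Together with the Prikry property and the support regime, this gives preservation of cardinals and cofinalities up to and including $\eta$, while the Levy collapses identify the surviving cardinals strictly between $\eta^+=\aleph_\gamma$ and $\rho$ as a run of length $\eta$, forcing $\rho=\aleph_{\gamma+\eta}$. The $\lambda$ generators of the extenders then produce $\lambda=\rho^{+n}$ distinct subsets of $\rho$, so $2^\rho\geq\rho^{+n}$, while a nice-name count under the $\rho^{++}$-c.c.\ (using that the poset has size $\lambda$ and that $(\rho^{+n})^\rho=\rho^{+n}$ by GCH) gives the matching upper bound, whence $2^{\aleph_{\gamma+\eta}}=\aleph_{\gamma+\eta}^{+n}$. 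For a limit $\beta<\eta$, the initial segment of the construction below $\beta$ is itself an extender-based forcing blowing up the power set of the generic point $\aleph_{\gamma+\beta}$; since $\aleph_{\gamma+\beta}^{+n}\leq\lambda$, enough generators are available to yield $2^{\aleph_{\gamma+\beta}}\geq\aleph_{\gamma+\beta}^{+n}$.

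The main obstacle will be the Prikry property in the presence of the interleaved collapses at uncountable cofinality: one must show that the collapse coordinates can always be handled by direct extensions without disturbing the diagonal structure of the extender part, and the argument's success rests on the collapses being small relative to the next extender together with the coherence and closure hypotheses in clause (2). A secondary, bookkeeping difficulty is to arrange the collapses so that $\rho$ lands precisely at $\aleph_{\gamma+\eta}$, rather than merely at some aleph of cofinality $\eta$, which is where the exact value $\eta^+=\aleph_\gamma$ enters.
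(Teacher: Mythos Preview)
Your outline is broadly correct and follows the paper's approach: a Gitik-style extender-based forcing with interleaved Levy collapses, Prikry property as the technical heart, $\Delta$-system for the chain condition, and generators/scales for the cardinal arithmetic. Two points deserve sharper formulation, however, since they are precisely where the paper's work lies and your description is either too optimistic or slightly misdirected.

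For the Prikry property, ``absorb the collapse into the stem and then run the usual diagonal-intersection'' undersells the difficulty. The paper proves the Prikry property by induction on $\eta$, but the inductive hypothesis must be \emph{strengthened}: one shows that $\mathbb{P}\times\mathbb{A}$ has the Prikry property (for the evident $\leq^*$) whenever $\mathbb{A}$ is an arbitrary $\bar{\kappa}_\eta^+$-closed poset. The reason is that when one peels off the top level $\alpha$ and squishes the lower part by an $\alpha$-object $\mu$, the collapses $\Col(\bar{\kappa}_\alpha^+,<\mu(\kappa_\alpha))\times\Col(\mu(\kappa_\alpha),\rho_\alpha(\mu)^+)$ attached at level $\alpha$ are only $\bar{\kappa}_\alpha^+$-closed---not closed enough to be absorbed into the $\kappa_\alpha^+$-closed Cohen part---so they must be carried along as the auxiliary factor $\mathbb{A}$ in the inductive call on the shorter forcing. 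An \emph{integration lemma} (computing $j_{E_\alpha}(\tau\mapsto t(\tau))(\mc_\alpha)$ and using coherence) then reassembles the family of decisions indexed by $\mu$ back into a single direct extension of the original condition. Your description does not isolate either of these devices.

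For the chain condition, the $\Delta$-system handles the Cohen parts, but the pure collapse functions $H_\alpha^l$ have as domain a full measure-one set and are therefore \emph{not} small objects amenable to a $\Delta$-system. The paper's argument is that each $H_\alpha^l$ represents, via $j_{E_\alpha}(H_\alpha^l)(\mc_\alpha(d_\alpha))$ (or $i_\alpha(H_\alpha^2)(\kappa_\alpha)$ for $l=2$), a single condition in a Levy collapse of size at most $\rho^+$, so one can pigeonhole on these representatives. This ultrapower-representation step is what makes the $\lambda$-c.c.\ go through and is not a routine stem argument.
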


%\textcolor{blue}{It's better to use angle brackets for sequences. Once you started with ``there is a sequence''
%  it is clearer to state all the properties of the sequence at one go. You should say explicitly what the coherence
%  property (which implies Mitchell increasing) says.} 

We make a few remarks on how to change the inequality in Theorem \ref{main}, and how to remove the noise ``$\gamma$" in some cases.  Notice that in Theorem \ref{main}, if $\eta$ is not an $\aleph$-fixed point, then $\gamma<\eta \leq \gamma+\eta$, so that in our model, SCH fails at $\aleph_\eta$. The condition ``$j_{E_\beta}(E_\alpha) \restriction \lambda = E_\alpha$ for $\alpha < \beta < \eta$'' implies that the sequence $\langle E_\alpha: \alpha<\eta \rangle$ is {\em Mitchell increasing}
  in the sense that $E_\alpha \in \Ult(V, E_\beta)$ for $\alpha < \beta < \eta$.
Gitik has conjectured that the existence of an uncountable Mitchell increasing sequence as above (with $n=2$) is optimal
to obtain failure of SCH at a singular cardinal of uncountable cofinality, which is singular in the core model.

Our forcing construction is a combination of Gitik's construction \cite{Gitikblow} which blows up the powerset of a singular cardinal with any cofinality,
            and his recent work on collpasing generators \cite{Gitikcol}.

We assume the reader is familiar with large cardinals and forcing.
Our presentation of extender-based forcing follows the same lines
as some work of  Merimovich \cite{Merimovichpri}.

Our notations are mostly standard. For each sequence of ordinal length
  $X = \langle x_i: i<\alpha \rangle$ and $\beta<\alpha$,
  we write $X \restriction \beta =\langle  x_i : i<\beta \rangle$,
  and $X \setminus \beta = \langle x_i : \beta \leq i <\alpha \rangle$.  
 
The organization of this paper is as follows:
  
 \begin{itemize} 
  \item In Section \ref{prelim}, we describe the assumptions we use to build the forcing. We then introduce the notions of domains and objects, explain the connection between objects and extenders, and do some extender analysis.

 \item In Section \ref{forcing}, we decsribe the forcing construction and explain how the extension works. After that, we describe the chain condition and closure of the forcing,
            and prove an {\em integration lemma}, which plays a vital role in proving the Prikry property.

  \item In Section \ref{prikry}, we prove the Prikry property and the strong Prikry property.

  \item In Section \ref{cardinalpreserve}, we describe the cardinals which are preserved.

  \item In Section \ref{scaleanalysis}, we analyze some scales. We use scales to show cardinal arithmetic, and also show that one of the scales derived from the forcing is very good.
  
  \end{itemize}
\section{preliminaries}\label{prelim}

%\textcolor{blue}{Helpful to start by saying why our assumptions are more general. One-sentence paragraphs are bad style.
%  Phrases like ``For each $0<\alpha\leq \eta$'' are best avoided because it can be unclear which ordinal we are quantifying over.
%  Much clearer to fix $\rho$ first and then define $\lambda$ as $\rho^{++}$. You don't need to keep saying ``in the model''
%  all the time, we know we are talking about $V$. Your assumptions about the extenders are mostly about the whole sequence,
%  so you should not start listing them with ``for all $\alpha < \eta$''. Don;t refer to elements in lists with hardcoded
%  labels, always use label and ref commands. You defind the extenders to have length $\lambda$ so no need to say it again.
%  To represent an element of an extender ultrapower needs a ``support'' as well as a function.
%}
  
We start with a ground model $V$ in which GCH holds.
  Our assumptions are slightly more general than those of Theorem \ref{main}, because we will establish some properties
  of the main forcing (notably the Prikry property) by induction on $\eta$. In our assumptions
  $\lambda$ is still of the form $\rho^{+n}$, but in our induction argument for the Prikry property, the proof requires extenders of longer lengths, hence we permit $\rho$ to be larger than $(\sup_{\alpha < \eta} \kappa_\alpha)^{+n}$. Throughout the paper, we treat the case $n=2$. One can modify our analysis to generalize for any larger $n$.

Fix an ordinal $\eta>0$ ($\eta$ can be finite), and 
a sequence of cardinals $\langle \kappa_\alpha : \alpha<\eta \rangle$ with  $\eta<\kappa_0$.
For each $\alpha$ with  $0< \alpha \leq \eta$,
let $\overline{\kappa}_\alpha = \sup_{\beta<\alpha} \kappa_\beta$,
and let $\overline{\kappa}_0 \in [\max\{\omega,\eta\},\kappa_0)$ be an ordinal.
Let $\rho \geq \sup_{\alpha<\eta} (\kappa_\alpha^{++})$, where $\rho$ is either an inaccessible cardinal or a limit of inaccessible cardinals,
and let $\lambda=\rho^{++}$. Note that the double successor operations appearing in a lower bound for $\rho$ is meaningful since $\eta$ can be successor.

Assume that\label{extenderassumption} :
\begin{enumerate}
    \item \label{closure,correctness}For each $\alpha$, $\kappa_\alpha$ carries a $(\kappa_\alpha,\lambda)$-extender $E_\alpha$. 
    \item \label{exincrease}Let  $j_{E_\alpha}:V \to M_\alpha$ be derived from the extender $E_\alpha$.
      Then $M_\alpha$ is closed under $\kappa_\alpha$-sequences, and $M_\alpha$ computes
      cardinals correctly up to and including $\lambda$.
    \item  \label{laverdia} There is a function $s_\alpha: \kappa_\alpha \to \kappa_\alpha$
      with $j_{E_\alpha}(s_\alpha)(\kappa_\alpha)=\rho$, and $s_\alpha(\nu) > \max\{ \nu,\overline{\kappa}_\alpha \}$ for all $\nu$.
 
    \item  \label{mitchell} For $\beta<\alpha$, there is a function $h_\alpha^\beta:\kappa_\alpha \to V_{\kappa_\alpha}$ such that $j_{E_\alpha}(h_\alpha^\beta)(\kappa_\alpha)=E_\beta$.
 \end{enumerate}   
As we noted already, the condition \ref{mitchell} implies the {\em Mitchell increasing property}, namely,
for $\alpha<\beta<\eta$, $E_\alpha \in M_\beta$.

Observe that the sequence $\langle \kappa_\alpha : \alpha<\eta \rangle$ is \textbf{not} continuous.
In general, for $\alpha$ with $\alpha+1\leq \eta$,
$\overline{\kappa}_{\alpha+1}=\kappa_\alpha$. In addition, for each $\alpha<\eta$, $\overline{\kappa}_\alpha<\kappa_\alpha$, hence,
requirement \ref{laverdia} is sensible.

%\textcolor{blue}{Use mbox to put text inside a formula. Punctuation goes outside dollar signs.}

The following representation of the extenders is due to Merimovich \cite{Merimovichpri}.
For each $\alpha<\eta$, we let an $\alpha$-{\em domain} be a set $d\in[\lambda]^{\kappa_\alpha}$
such that $\kappa_\alpha+1 \subseteq d$. In the original Merimovich context $\kappa_\alpha=\min(d)$, but it our case, it is more convenient to have $d \supseteq \kappa_\alpha+1$. Define $\mc_\alpha(d)$ as $(j_{E_\alpha} \restriction d)^{-1}$.
We represent $E_\alpha= \langle E_\alpha(d) : \mbox{$d$ is an $\alpha$-domain} \rangle$, where
$X \in E_\alpha(d)$ iff $\mc_\alpha(d) \in j_{E_\alpha}(X)$.
From this point, if there is no ambiguity about the value of $\alpha$,
we may drop $\alpha$ from the notation (refer to an $\alpha$-domain as a domain,
$E_\alpha$ as $E$, $\mc_\alpha(d)$ as $\mc(d)$ and so on).  We define a set on which $E_\alpha(d)$ concentrates.

\begin{defn}
Let $d$ be an $\alpha$-domain. $\OB_\alpha(d)$ is the collection of $d$-$\alpha$-{\em objects}, which are the functions $\mu$ such that 
\begin{enumerate}
    \item $\dom(\mu) \subseteq d$, $\ran(\mu) \subseteq \kappa_\alpha$, and $\kappa_\alpha \in \dom(\mu)$.
    \item $|\dom(\mu)| = \mu(\kappa_\alpha)$ (which is below $\kappa_\alpha)$, and $\mu(\kappa_\alpha)$ is inaccessible.
    \item $\dom(\mu)\cap \kappa_\alpha=\mu(\kappa_\alpha)$.
    \item $\mu$ is order-preserving.
    \item for $\beta \in \dom(\mu)\cap \kappa_\alpha$, $\mu(\beta)=\beta$.

\end{enumerate}

\end{defn}

As mentioned earlier, it is straightforward to check that $E_\alpha(d)$ concentrates on $d$-$\alpha$-objects.
It is easy to see that for each domain $d$, there is a unique $\alpha<\eta$ such that $d$ is an $\alpha$-domain.
Hence, we use the term ``$d$-object'' to refer to a $d$-$\alpha$-object for the unique $\alpha$
   such that $d$ is an $\alpha$-domain.
The collection $\langle E_\alpha(d) : \textrm{$d$ is an $\alpha$-domain} \rangle$ comes with natural projections.
For each pair of $\alpha$-domains $d \subseteq d^\prime$ define $\pi_{d^\prime,d}: \OB_\alpha(d^\prime) \rightarrow \OB_\alpha(d)$
by $\pi_{d^\prime,d}(\mu)=\mu \restriction d$: it is routine to check that $\pi_{d^\prime, d}$ projects $E_\alpha(d')$ to $E_\alpha(d)$. We do some analysis of extenders in Lemma \ref{rowbottom} and Lemma \ref{sparseuf}.

\begin{lemma}\label{rowbottom}
Suppose $0 \leq \alpha_0 < \dots < \alpha_{k-1}< \eta$, for each $j< k$, $d_i$ is an $\alpha_i$-domain, $A_i \in E_{\alpha_i}(d_i)$, and $F:\prod_{i<k}A_i \to 2$. Then there are $B_i \subseteq A_i$, $B_i \in E_{\alpha_i}(d_i)$ such that $F \restriction (\prod_{i<k} B_i)$ is constant.
\end{lemma}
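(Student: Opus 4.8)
The plan is to prove this by induction on $k$, viewing each $E_{\alpha_i}(d_i)$ as a $\kappa_{\alpha_i}$-complete ultrafilter on $\OB_{\alpha_i}(d_i)$ and homogenizing one coordinate at a time, starting from the coordinate with the \emph{smallest} critical point. First I would record the two facts that drive the induction. The completeness: since $X \in E_{\alpha_i}(d_i)$ iff $\mc_{\alpha_i}(d_i) \in j_{E_{\alpha_i}}(X)$ and $\crit(j_{E_{\alpha_i}})=\kappa_{\alpha_i}$, the map $j_{E_{\alpha_i}}$ commutes with intersections of fewer than $\kappa_{\alpha_i}$ sets, so $E_{\alpha_i}(d_i)$ is $\kappa_{\alpha_i}$-complete. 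The cardinality bound: a $d_i$-object has domain a subset of $d_i$ of size $<\kappa_{\alpha_i}$ and range contained in $\kappa_{\alpha_i}$, so using $|d_i|=\kappa_{\alpha_i}$, GCH, and the inaccessibility of $\kappa_{\alpha_i}$ we get $|\OB_{\alpha_i}(d_i)| = \kappa_{\alpha_i}^{<\kappa_{\alpha_i}} = \kappa_{\alpha_i}$; in particular $|A_i| \le \kappa_{\alpha_i}$.

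The base case $k=1$ is immediate: since $E_{\alpha_0}(d_0)$ is an ultrafilter, one of $F^{-1}(0),F^{-1}(1)$ lies in it, and I take $B_0$ to be that set. For the inductive step I would assume the lemma for $k-1$. Given $F:\prod_{i<k}A_i \to 2$, for each $\mu_0 \in A_0$ I apply the induction hypothesis to the section $F_{\mu_0} : \prod_{1\le i<k} A_i \to 2$ defined by $F_{\mu_0}(\vec\mu)=F(\mu_0,\vec\mu)$, obtaining sets $B_i^{\mu_0} \subseteq A_i$ with $B_i^{\mu_0} \in E_{\alpha_i}(d_i)$ for $1 \le i <k$ on which $F_{\mu_0}$ is constant, say with value $c(\mu_0) \in 2$.

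Next I would intersect over the first coordinate. Because $\alpha_0 < \alpha_i$ gives $\kappa_{\alpha_0} < \kappa_{\alpha_i}$ while $|A_0| \le \kappa_{\alpha_0}$, the set $B_i := \bigcap_{\mu_0 \in A_0} B_i^{\mu_0}$ is an intersection of at most $\kappa_{\alpha_0}<\kappa_{\alpha_i}$ members of the $\kappa_{\alpha_i}$-complete ultrafilter $E_{\alpha_i}(d_i)$, hence $B_i \subseteq A_i$ and $B_i \in E_{\alpha_i}(d_i)$ for each $1 \le i < k$. By construction, for every $\mu_0 \in A_0$ and every $\vec\mu \in \prod_{1\le i<k} B_i$ we have $F(\mu_0,\vec\mu)=c(\mu_0)$, so on $A_0 \times \prod_{1\le i<k} B_i$ the coloring $F$ depends only on the first coordinate, through $c$. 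Finally, applying the ultrafilter property of $E_{\alpha_0}(d_0)$ to $c:A_0 \to 2$ yields $B_0 \subseteq A_0$, $B_0 \in E_{\alpha_0}(d_0)$, on which $c$ is constant; then $F$ is constant on $\prod_{i<k} B_i$, as required.

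The only real content is the intersection step: it is precisely there that the hypothesis that the critical points are strictly increasing is used, together with the estimate $|A_0| \le \kappa_{\alpha_0}$, to guarantee that shrinking all higher-index coordinates simultaneously over every $\mu_0$ keeps each $B_i$ in its ultrafilter. This is also why one must homogenize the smallest-critical-point coordinate first and intersect the more complete ultrafilters afterward; reversing the order would break the completeness count. Everything else is routine bookkeeping.
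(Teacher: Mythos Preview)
Your proof is correct. It differs from the paper's in the direction of the induction: the paper peels off the \emph{top} coordinate $A_{k-1}$ first, observing that because $\lvert\prod_{i<k-1} A_i\rvert \le \kappa_{\alpha_{k-2}}$ there are fewer than $\kappa_{\alpha_{k-1}}$ possible section functions $F_\mu(\vec x)=F(\vec x{}^\frown\langle\mu\rangle)$ (by inaccessibility of $\kappa_{\alpha_{k-1}}$), so by $\kappa_{\alpha_{k-1}}$-completeness there is $B_{k-1} \in E_{\alpha_{k-1}}(d_{k-1})$ on which $\mu \mapsto F_\mu$ is constantly some fixed $F'$, and then applies the induction hypothesis once to $F'$. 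You instead peel off the \emph{bottom} coordinate $A_0$, apply the induction hypothesis separately for each $\mu_0 \in A_0$, and then intersect the resulting $\kappa_{\alpha_0}$-many measure-one sets inside the more complete ultrafilters $E_{\alpha_i}(d_i)$ for $i\ge 1$. Both are standard Rowbottom-style reductions and rest on the same two inputs, $\lvert A_i\rvert \le \kappa_{\alpha_i}$ and $\kappa_{\alpha_i}$-completeness; the paper's route invokes the induction hypothesis only once and skips the intersection step, while yours makes the role of the strictly increasing critical points more explicit.
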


\begin{proof}

  Induct on $k$. The case $k=1$ is trivial. 
  Suppose $k>1$. For each $\mu \in A_{\alpha_{k-1}}$, let $F_\mu: \prod_{i<{k-1}}A_i \to 2$ be defined by $F_\mu(\vec{x})=F(\vec{x}^\frown  \langle \mu \rangle)$. 
  Since $\kappa_{\alpha_{k-1}}$ is inaccessible, by completeness of $E_{\alpha_{k-1}}(d_{k-1})$, there exist $B_{k-1} \subseteq A_{k-1}$ and is measure one for the appropriate measure, and $F'$ such that $F_\mu = F'$ for all $\mu \in B_{k-1}$.
   Now apply the induction hypothesis to find $B_i \subseteq A_i$, $B_i \in E_{\alpha_i}(d_i)$ for $i<k-1$ with $F^\prime \restriction (\prod_{i<k-1}B_i)$ constant. 
   It is easy to see that $F \restriction (\prod_{i<k}B_i)$ is constant.
  
\end{proof}

Lemma \ref{sparseuf} below is a pointwise reflection of the size of the extender.
\begin{lemma}\label{sparseuf}
  For each $\alpha<\eta$ and each $\alpha$-domain $d$, there is a set $A_\alpha(d)$ such that
  $A_\alpha(d) \in E_\alpha(d)$, and for each $\nu<\kappa_\alpha$,
  the size of $\{\mu \in A_\alpha(d) : \mu(\kappa_\alpha)=\nu\}$ is at most $s_\alpha(\nu)^{++}$.
\end{lemma}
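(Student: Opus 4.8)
The plan is to fix the $\alpha$-domain $d$ and work with the ultrapower $j=j_{E_\alpha}\colon V\to M_\alpha$, recalling that $X\in E_\alpha(d)$ iff $\mc_\alpha(d)\in j(X)$, where $\mc_\alpha(d)=(j\restriction d)^{-1}$. Thus it suffices to produce a set $A_\alpha(d)\subseteq\OB_\alpha(d)$ satisfying the stated fiber bound together with $\mc_\alpha(d)\in j(A_\alpha(d))$. Writing $\nu=\mu(\kappa_\alpha)$, every $d$-object $\mu$ is the union of the identity on $\nu$, the pair $(\kappa_\alpha,\nu)$, and an order-preserving injection of its upper part $\dom(\mu)\setminus(\kappa_\alpha+1)\subseteq d$ into $(\nu,\kappa_\alpha)$; so $\mu$ is determined by this upper part together with the corresponding values. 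The generator $\mc_\alpha(d)$ itself satisfies $\mc_\alpha(d)(j(\kappa_\alpha))=\kappa_\alpha$, has range $d\subseteq\lambda=\rho^{++}=j(s_\alpha)(\kappa_\alpha)^{++}$ by assumption \ref{laverdia}, and has upper part $\dom(\mc_\alpha(d))\setminus(j(\kappa_\alpha)+1)=j''(d\setminus(\kappa_\alpha+1))$.

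I will impose two restrictions, one on ranges and one on domains. The range restriction $\ran(\mu)\subseteq s_\alpha(\nu)^{++}$ transfers under $j$ to a condition satisfied by $\mc_\alpha(d)$, since $\ran(\mc_\alpha(d))=d\subseteq j(s_\alpha)(\kappa_\alpha)^{++}$. The main obstacle is that this restriction alone is useless for the fiber bound: for a fixed $\nu$ there remain $\kappa_\alpha$-many admissible upper domains $e\subseteq d$, so the fibers stay of size $\kappa_\alpha$. The key idea is therefore to also cut the domains down by a sequence whose $j$-image at the seed $\kappa_\alpha$ recaptures exactly the upper part of the generator. Concretely, fix an increasing enumeration $\langle \xi_i : i<\theta\rangle$ of $d\setminus(\kappa_\alpha+1)$ (so $\theta\le\kappa_\alpha$) and set $D_\nu=\{\xi_i : i<\nu\}$ for $\nu<\kappa_\alpha$, so that $|D_\nu|\le\nu$. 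Since $j$ fixes every $i<\kappa_\alpha$, a direct computation gives $j(\langle D_\nu\rangle)(\kappa_\alpha)\supseteq\{j(\xi_i):i<\theta\}=j''(d\setminus(\kappa_\alpha+1))$.

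I then define
\[
A_\alpha(d)=\{\mu\in\OB_\alpha(d): \dom(\mu)\setminus(\kappa_\alpha+1)\subseteq D_{\mu(\kappa_\alpha)}\ \text{and}\ \ran(\mu)\subseteq s_\alpha(\mu(\kappa_\alpha))^{++}\}.
\]
To verify $A_\alpha(d)\in E_\alpha(d)$, I apply $j$ and check that $\mc_\alpha(d)$ meets both clauses of $j(A_\alpha(d))$ at the value $\mu(j(\kappa_\alpha))=\kappa_\alpha$: the range clause holds as noted, and the domain clause holds because $\dom(\mc_\alpha(d))\setminus(j(\kappa_\alpha)+1)=j''(d\setminus(\kappa_\alpha+1))\subseteq j(\langle D_\nu\rangle)(\kappa_\alpha)$. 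For the fiber bound, fix $\nu<\kappa_\alpha$ (necessarily inaccessible): each $\mu$ in the $\nu$-fiber is determined by an upper part $e\subseteq D_\nu$ together with an order-preserving injection of $e$ into $(\nu,s_\alpha(\nu)^{++})$. By GCH the number of choices of $e$ is at most $2^{|D_\nu|}\le 2^{\nu}=\nu^+$, and for each $e$ the number of order-preserving injections into $(\nu,s_\alpha(\nu)^{++})$ is at most $(s_\alpha(\nu)^{++})^{\nu}=s_\alpha(\nu)^{++}$, using that $s_\alpha(\nu)^{++}<\kappa_\alpha$ is a regular cardinal and $\nu<s_\alpha(\nu)$. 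Hence the fiber has size at most $\nu^+\cdot s_\alpha(\nu)^{++}=s_\alpha(\nu)^{++}$, as required. The one genuinely delicate point — and the crux of the whole argument — is the choice of $\langle D_\nu\rangle$: it must keep each $D_\nu$ small enough (size $\le\nu$) for the fiber count while having its value at the seed $\kappa_\alpha$ cover the full upper part $j''(d\setminus(\kappa_\alpha+1))$ of the generator, which is precisely what forces the choice $D_\nu=\{\xi_i:i<\nu\}$ along an increasing enumeration.
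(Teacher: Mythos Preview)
Your proof is correct, and it is somewhat simpler than the paper's. Both arguments begin with the same domain restriction: enumerating $d\setminus(\kappa_\alpha+1)$ as $\langle\xi_i:i<\theta\rangle$ and cutting the upper domain down to an initial segment $\{\xi_i:i<\mu(\kappa_\alpha)\}$. From there the routes diverge. The paper fixes a special enumeration $\vec t$ of $[\kappa_\alpha]^{<\kappa_\alpha}$ and the canonical functions $f_i$ on $\kappa_\alpha^+$, and then further shrinks to those $\mu$ for which there exists a single ordinal $\xi<s_\alpha(\nu)^{++}$ with $\mu(\beta)=\pi_\beta(\xi)$ for every $\beta$ in the upper domain; this yields a literal parametrization of the fiber by the pair $(\nu,\xi)$. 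You instead impose only the coarse range bound $\ran(\mu)\subseteq s_\alpha(\nu)^{++}$ and then count directly: at most $2^{|D_\nu|}\le\nu^+$ choices for the upper domain and at most $(s_\alpha(\nu)^{++})^{\nu}=s_\alpha(\nu)^{++}$ order-preserving maps for each, invoking GCH. Your approach is more elementary and entirely adequate for the stated lemma; the paper's sharper encoding (one ordinal $\xi$ per $\mu$) is not used elsewhere in the paper, so nothing is lost by your simplification.
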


\begin{proof}

  We drop the subscript $\alpha$ for this proof and fix a domain $d$.
  Let $d^*=d \setminus \kappa$. For each $\mu \in \OB(d)$, set $\mu^*=\mu \restriction (\dom(\mu) \setminus \kappa)$,
  and let  $\mc^*(d)=\mc(d) \restriction (\dom(\mc(d)) \setminus \kappa)$.
  Enumerate $d^*$ as $\langle \delta_i : i<\kappa \rangle$.
  Let $B_d$ be the set of $\mu \in \OB(d)$ such that $\dom(\mu^*)=\{\delta_i : i<\mu(\kappa)\}$. We claim that $B_d \in E(d)$.
  The point is that $\dom(\mc^*(d))=j[d^*]=\{j(\delta)_i : i<\kappa\}$, $\mc^*(d)(j(\kappa))=\kappa$,  and
  $j( \langle \delta_i : i<\kappa \rangle) \restriction \kappa= \langle j(\delta_i) : i<\kappa \rangle$.

Let $\vec{t}= \langle t_\delta  : \delta<\kappa \rangle$ be an enumeration of $[\kappa]^{<\kappa}$ such that whenever $\delta<\kappa$ is a closure point of the map $\gamma \mapsto s(\gamma)^{++}$, $\langle t_\beta : \beta \in [\delta,s(\delta)^{++}) \rangle$ enumerates the set of  $t \in [s(\delta)^{++}]^{\leq \delta}$ with $\min(t)=\delta$. 
Let $j(\vec{t}) = \vec{T}$. Since $\kappa$ is a closure point for $j(\gamma \mapsto s(\gamma)^{++})$, and $j(s)(\kappa)^{++}=\lambda$, we have that 
  $\langle T_\delta \sqcup \kappa : \kappa \leq \delta < \lambda \rangle$ enumerates all domains.
 Choose $\delta$ so that $T_\delta = d^*$. For each $\beta \in d^*$, let $i = \ot(d^* \cap \beta) < \kappa^+$,
  and let $\pi_{\beta}$ be the function which takes $\nu < \kappa$ to the $f_i(\min(t_\nu))^{\rm th}$ element
  of $t_\nu$ where $f_i$ is the $i^{\rm th}$ canonical function. Then $j(\pi_{\beta})(\delta)$ is the
  $j(f_i)(\kappa)^{\rm th}$ element of $T_\delta$, which is $\beta$. 

  Define
  \[
  A_d = \{ \mu \in B_d : \exists \xi < s(\mu(\kappa))^{++} \; \forall \beta \in \dom(\mu^*) ,
  \mu^*(\beta) = \pi_\beta(\xi) \}
  \]

  We claim that $A_d \in E(d)$. Note that $\delta<\lambda=(j(s)(\mc(d)(j(\kappa))))^{++}$ and for each $\beta \in d^*$, $\mc^*(d)(j(\beta))=\beta=j(\pi_\beta)(\delta)$.

  The conclusion of the proof follows from the fact that given
  $\tau \in A_d$ with $\tau(\kappa)=\nu$, $\dom(\tau)= \nu \cup \{ \delta_i: i<\nu\}$, $\tau(\beta)=\beta$ for all $\beta < \nu$,
  and there is $\xi<s(\nu)^{++}$ such that for $\beta \in \dom(\mu) \setminus \kappa$ , $\mu(\beta)=\pi_\beta(\xi)$.

  Hence, $\nu$ and $\xi$ completely determine $\tau$.
 For every $\nu$ there are only $s(\nu)^{++}$ possible values for $\xi$. Hence, $A_d$ works as required.
\end{proof}

In the sequel we define $A_\alpha(d)$ to be the measure one set from the conclusion of Lemma \ref{sparseuf}.
If $\beta<\alpha$, then there is a notion of domain $d$ with respect to $h_\alpha^\beta(\gamma)$ for appropiate $\gamma$.
We will use the same notation $A_\alpha(d)$ in Lemma \ref{sparseuf}, as the particular measure-one set in $h_\alpha^\beta(\gamma)(d)$.

\section{The  forcing}\label{forcing}

The forcing is constructed using ideas from Gitik's recent preprints \cite{Gitikblow} and  \cite{Gitikcol}. To apply the induction hypothesis, in this section, we assume that $\overline{\kappa}_0$ is regular and $\max\{\omega,\eta\} \leq \overline{\kappa}_0< \kappa_0$.
Recall that $\rho \geq \sup_{\alpha<\eta}(\kappa_\alpha)^{++}$ and $\lambda=\rho^{++}$. We introduce some notation here. For each pair of cardinals $\kappa$ and $\theta$,
  let $\mathcal{A}(\kappa, \theta)$ be the set of partial functions from $\theta$ to $\kappa$
  with domains which have size $\kappa$ and contain $\kappa+1$,
  ordered by extension. 
  Also, define $E_\alpha(\kappa_\alpha)$ by $X \in E_\alpha(\kappa_\alpha)$
  iff $\kappa_\alpha \in j_{E_\alpha}(X)$. This is just a normal measure on $\kappa_\alpha$.
  If $d$ is an $\alpha$-domain and $A \in E_\alpha(d)$ then
    we define $A(\kappa_\alpha) = \{ \mu(\kappa_\alpha) : \mu \in A \}$,
    and note that easily $A(\kappa_\alpha) \in E_\alpha(\kappa_\alpha)$. Assume all the hypotheses from the beginning of Section \ref{prelim}.

\begin{defn}\label{poset}
  The forcing $\mathbb{P}_{\langle E_\alpha : \alpha<\eta\rangle}$ consists of sequences
  $p= \langle p_\alpha : \alpha<\eta \rangle$ such that for  a finite subset $\supp(p)$ of $\eta$ (the {\em support of $p$}):
\begin{align*}
p_\alpha=
\begin{cases}
\langle f_\alpha,\lambda_\alpha,h^0_\alpha,h^1_\alpha,h^2_\alpha \rangle & \text{if }  \alpha \in \supp(p), \\
\langle f_\alpha,A_\alpha,H^0_\alpha,H^1_\alpha,H^2_\alpha \rangle & \text{otherwise},
\end{cases} 
\end{align*}
where:

  \begin{enumerate}

    \item \label{cards} $\overline{\kappa}_\alpha < \lambda_\alpha < \kappa_\alpha$ for all $\alpha \in\supp(p)$.

    \item\label{alphastar}  $f_\alpha \in \mathcal{A}(\kappa_\alpha,\lambda_{\alpha^*})$ where $\alpha^*=\min(\supp(p) \setminus (\alpha+1))$ if it exists, otherwise $\lambda_{\alpha^*}=\lambda$.
   We denote $\dom(f_\alpha)$ by $d_\alpha$, and note that $d_\alpha$ is an $\alpha$-domain.
  
\item \label{domin} $\{d_\alpha : \alpha < \eta \}$ is $\subseteq$-increasing.

\item \label{reflect}  If $\alpha \in \supp(p)$ then:
  \begin{itemize}
  \item $f_\alpha(\kappa_\alpha)$ is inaccessible and $f_\alpha(\kappa_\alpha)>\overline{\kappa}_\alpha$. 
  \item  $\lambda_\alpha=\rho_\alpha^{++}$ where $\rho_\alpha = s_\alpha(f_\alpha(\kappa_\alpha))$, so that
    in particular $\overline{\kappa}_\alpha < \overline{\kappa}_\alpha^+ <  f_\alpha(\kappa_\alpha) < \rho_\alpha < \rho_\alpha^+ <\lambda_\alpha<
      \lambda_\alpha^+ < \kappa_\alpha$.

  \end{itemize}

\item \label{measureone} If $\alpha \not \in \supp(p)$, then for $\alpha>\max(\supp(p))$, $A_\alpha \in E_\alpha(d_\alpha)$, and for $\alpha<\max(\supp(p))$, $A_\alpha \in h_{\alpha^*}^\alpha(f_{\alpha^*}(\kappa_{\alpha^*}))$, where $\alpha^*$ is defined as in \ref{alphastar}. Furthermore, $A_\alpha \subseteq A_\alpha(d_\alpha)$.

\item \label{impurecoll} If $\alpha \in \supp(p)$, then $h_\alpha^0 \in \Col(\overline{\kappa}_\alpha^+,<f_\alpha(\kappa_\alpha)), h_\alpha^1 \in \Col(f_\alpha(\kappa_\alpha),\rho_\alpha^+)$, and $h_\alpha^2 \in \Col(\lambda_\alpha^+,<\kappa_\alpha)$.

\item \label{purecoll} If $\alpha \not \in \supp(p)$, then 
\begin{enumerate}

\item\label{H0H1dom}
  $\dom(H_\alpha^0)=\dom(H_\alpha^1) = A_\alpha$.

\item\label{H0H1coll}
For $\tau \in \dom(H_\alpha^0)$ let $\nu=\tau(\kappa_\alpha)$, then $H_\alpha^0(\tau)
   \in \Col(\overline{\kappa}_\alpha^+,<\nu)$, and $H_\alpha^1(\tau) \in \Col(\nu,s_\alpha(\nu)^+)$.

\item \label{H2dom}
 $\dom(H_\alpha^2) = A_\alpha(\kappa_\alpha)$. 

\item \label{H2coll} For each ordinal $\nu \in \dom(H_\alpha^2)$, $H_\alpha^2(\nu) \in \Col(s_\alpha(\nu)^{+3},<\kappa_\alpha)$.

\end{enumerate}

\end{enumerate}

  \end{defn}

Readers of Gitik's preprint \cite{Gitikcol} will notice a few differences in the collapsing parts, in particular $H_\alpha^0$ and $H_\alpha^1$ are defined with domains in $E_\alpha(d_\alpha)$ rather than a projected measure. This is not necessary here because the pattern of collapses is a bit different,
  the projection in \cite{Gitikcol} is needed for the chain condition, but this is not necessary here and hence, permits
  our conditions to be simpler.

There are two main reasons why our situation is simpler than that in \cite{Gitikcol}. The first one is that as in
    \cite{Gitikblow} the ``impure'' part of a condition is rather small. The other is that we are defining the forcing
    using Merimovich's machinery of objects, which simplifies the ultrapower analysis. Both of these factors play a role
    in the chain condition analysis in  Theorem \ref{chaincond} below.

To clarify that all the requirements are reasonable, fix an $\alpha<\eta$.
 If $\alpha^*$ exists as in \ref{alphastar}, then $\kappa_\alpha \leq \overline{\kappa}_{\alpha^*}<\lambda_{\alpha^*}<\kappa_{\alpha^*}$.
  The forcing $\mathcal{A}(\kappa_\alpha,\lambda_{\alpha^*})$ is equivalent to the forcing adding $\lambda_{\alpha^*}$-subsets of $\kappa_\alpha^+$.
 
  The fact that for $\alpha \in \supp(p)$,
  $f_\alpha(\kappa_\alpha)$ is inaccessible will eventually guarantee that the forcing preserves finitely many cardinals at each coordinate,
  see Lemma \ref{preserve}.

When we discuss multiple conditions, we usually put a superscript on every component
with the name of the condition. Also, a condition is officially a sequence of sequences, but
we sometimes drop the angle brackets if it is clear from the context.
For example, when $\eta=1$, we may write $p=\langle f_0^p, A_0^p,(H_0^0)^p,(H_0^1)^p,(H_0^2)^p \rangle$
instead of having two pairs of brackets.
As one might expect, objects in $A_\alpha$ are used to extend a condition in a meaningful way. We will restrict our attention to the valid objects (which we call {\em addable}) in Definition \ref{addability}. 

We give some conventions here. The {\em pure part} of $p$ is the $p_\alpha$'s for $\alpha \not \in \supp(p)$. The rest is called the {\em impure part} of $p$. We refer the first coordinate $f_\alpha$ of
each $p_\alpha$ as the {\em Cohen part}, and the last three coordinates of
each $p_\alpha$ as the {\em collapse parts}. A condition $p$ is {\em pure} if $\supp(p)=\emptyset$. For each $\alpha$-object $\mu$, let $\rho_\alpha(\mu)=s_\alpha(\mu(\kappa_\alpha))$, $\lambda_\alpha(\mu)=\rho_\alpha(\mu)^{++}$.
For each $\nu<\kappa_\alpha$, let $\rho_\alpha(\nu)=s_\alpha(\nu)$, and $\lambda_\alpha(\nu)=\rho_\alpha(\nu)^{++}$.

%\textcolor{blue}{When FOO is a piece of notation and BAR is the mathematical object which it stands for, you
%  can write ``let FOO be BAR'', ``define FOO to be BAR'', ``denote BAR by FOO'', ``refer to BAR as FOO''.}

As usual, we drop the subscript $\alpha$ if it is clear from the context. Similarly, we sometimes drop the subscript $\langle E_\alpha : \alpha <\eta \rangle$ from $\mathbb{P}_{\langle E_\alpha : \alpha <\eta \rangle}$. We define
$\mathbb{P} \restriction \alpha$ to be $\{p \restriction \alpha : p \in \mathbb{P}\}$,
and $\mathbb{P} \setminus \alpha$ to be $\{p \setminus \alpha : p \in \mathbb{P}\}$.
The notions in Definition \ref{poset} refer to the corresponding components. For instance, $f_0^p$ is the Cohen part of $p_0$, the first entry of $p$, and $d_0^p$ is just $\dom(f_0^p)$.
  
We now define the concept of {\em direct extension}.

\begin{defn}
Let $p,q \in \mathbb{P}$. $p$ is a \textbf{direct extension} of $q$, denote $p \leq^* q$, if and only if:

\begin{enumerate}
    \item $\supp(p)=\supp(q)$.
    \item For each $\alpha$, $f_\alpha^p \leq f_\alpha^q$.
    \item If $\alpha \in \supp(p)$, then $\lambda_\alpha^p=\lambda_\alpha^q$.
    \item If $\alpha \in \supp(p)$, then $(h_\alpha^l)^p \leq (h_\alpha^l)^q$ for $l < 3$.
    \item If $\alpha \not \in \supp(p)$, then 
    \begin{enumerate}
    \item \label{projects} $\pi_{d_\alpha^p,d_\alpha^q}[A_\alpha^p] \subseteq A_\alpha^q$ (recall that $\pi_{d_\alpha^p,d_\alpha^q}(\tau)=\tau \restriction d_\alpha^q$) . As a consequence
      $\pi_{d_\alpha^p,d_\alpha^q}[\dom(H_\alpha^l)^p]  \subseteq \dom(H_\alpha^l)^q$ for $l < 2$ and $\dom(H_\alpha^2)^p \subseteq \dom(H_\alpha^2)^q $.
      
        \item For $l<2$, $(H_\alpha^l)^p(\tau) \leq (H_\alpha^l)^q(\pi_{d_\alpha^p,d_\alpha^q}(\tau))$ for all relevant $\tau$.

        \item For all relevant $\nu$, $(H_\alpha^2)^p(\nu) \leq (H_\alpha^2)^q(\nu)$.
          
    \end{enumerate}
\end{enumerate}
\end{defn}

We often express property \ref{projects} as ``$A_\alpha^p$ projects down to a subset of $A_\alpha^q$". To see what
the extensions of a given condition look like in general, we first restrict the kind of
  objects which are allowed to be used to extend a condition.

%\textcolor{blue}{You should be consistent about the punctuation of your itemised/enumerated lists, so
%  the commas here should be full stops and each clause should be a sentence. Also you must be consistent
%  about your notation for pointwise images.}

  The following definition and other definitions later on involves composition of functions. We note here that compositions are done partially, meaning for functions $f$ and $g$, $f \circ g$ has the domain $\{x \in \dom(g) : g(x) \in \dom(f)\}$ and $f \circ g(x)=f(g(x))$.

\begin{defn}\label{addability}
  Let $p \in \mathbb{P}$, $\alpha \not \in \supp(p)$, and  $\mu \in A_\alpha^p$. $\mu$ is \textbf{addable} to $p$
    if:
\begin{enumerate}
    \item $\overline{\kappa}_\alpha<\mu(\kappa_\alpha)$, and $\mu(\kappa_\alpha)$ is inaccessible.
    
    \item $\bigcup\limits_{\beta<\alpha} d_\beta \subseteq \dom(\mu)$ and $\mu \restriction (\overline{\kappa}_\alpha+1)$ is an identity function.

  \item For each  $\beta \in (\max(\supp(p)\cap \alpha),\alpha)$,
    
    \begin{enumerate}
    
    \item $\mu[d_\beta] \subseteq \lambda_\alpha(\mu)$ (recall that $\lambda_\alpha(\mu)=s_\alpha(\mu(\kappa))^{++})$.
    
    \item \label{composewithmuinverse} $\{\psi \circ \mu^{-1} : \psi \in A_\beta\} \in h_\alpha^\beta(\mu(\kappa_\alpha))(\mu[d_\beta])$.
    
    \item $\mu(\kappa_\beta)=\kappa_\beta$.
      %\textcolor{red}{I think it's more transparent to write $\mu[d_i]$ here, in particular
        %since $d_i \subseteq \dom(\mu)$ it is clearer what is going on. I changed a related remark below
        %because actually $\psi \in A_i$ implies $\dom(\psi) \subseteq d_i \subseteq \dom(\mu)$.}
        
        \item $h_\alpha^\beta(\mu(\kappa_\alpha))$ is a $(\kappa_\beta,s_\alpha(\mu(\kappa_\alpha)))$-extender.

\end{enumerate}

\item for $\gamma<\beta<\alpha$, $j_{h_\alpha^\beta(\mu(\kappa_\alpha))}(h_\beta^\gamma)(\kappa_\beta)=h_\alpha^\gamma(\mu(\kappa_\alpha))$.

\end{enumerate}
\end{defn}

We denote the set in condition \ref{composewithmuinverse} as $A_\beta \circ \mu^{-1}$.
It is important that almost every $\mu \in A_\alpha$ is addable, and the proof illustrates the role of coherence, so we sketch it.
We write $\mc_\alpha$ for $\mc_\alpha(d_\alpha)$.  
\begin{itemize}
\item  $j_{E_\alpha}(\overline{\kappa}_\alpha) = {\overline \kappa}_\alpha < \kappa_\alpha = \mc_\alpha(j_{E_\alpha}(\kappa_\alpha))$, and
  $\kappa_\alpha$ is inaccessible in $M_\alpha$.
\item If $D = \bigcup_{\beta < \alpha} d_\beta$ then $\vert D \vert \le \overline{\kappa}_\alpha$ and $D \subseteq d_\alpha$, so
  $j_{E_\alpha}(D) = j_{E_\alpha}[D] \subseteq \dom(\mc_\alpha)$.
\item  By the previous item, for $\beta<\alpha$, $j_{E_\alpha}(d_\beta) \subseteq \dom(\mc_\alpha)$, and
  easily $\rge(\mc_\alpha) \subseteq \lambda$. By the choice of $s_\alpha$,
   $j_{E_\alpha}(s_\alpha)(\mc_\alpha(j_{E_\alpha}(\kappa_\alpha))) = \rho$. Now $\lambda = \rho^{++}$
   and so easily $j_{E_\alpha}(\lambda_\alpha)(\mc_\alpha(j_{E_\alpha}(\kappa_\alpha))) = \lambda$.
\item Note that for each $\psi \in A_\beta$, $\dom(\psi) \subseteq d_\beta \subseteq \dom(\mu)$, so that
  $\dom(\psi \circ \mu^{-1}) = \mu[\dom(\psi)]$. By a routine calculation,
  $\mc_{\alpha}[j_{E_\alpha}(\dom(\psi))] = \dom(\psi)$, and $j_{E_\alpha}(\psi) \circ \mc_\alpha^{-1}$ maps
  $\gamma \in \dom(\psi)$ to $\psi(\gamma)$, that is $j_{E_\alpha}(\psi) \circ \mc_\alpha^{-1} = \psi$.
  Since $\vert A_\beta \vert = 2^{\kappa_\beta} < \kappa_\alpha$, $j_{E_\alpha}(A_\beta) = j_{E_\alpha}[A_\beta]$ and so
  $\{ \Psi \circ \mc_\alpha^{-1} : \Psi \in j_{E_\alpha}(A_\beta) \} = A_\beta$.
  Now $A_\beta \in E_\beta(d_\beta) = j_{E_\alpha}(h_\alpha^\beta)(\kappa_\alpha)(d_\beta)$ where $E_\beta$ is a $(\kappa_\beta,j_{E_\alpha}(s_\alpha)(\kappa_\alpha))$-extender.
  \item If $\gamma<\beta<\alpha$, then $E_\gamma=j_{E_\beta}(h_\beta^\gamma)(\kappa_\beta)=j_{E_\alpha}(\xi \mapsto j_{h_\alpha^\beta(\xi)}(h_\beta^\gamma)(\kappa_\beta))(\kappa_\alpha)$ and $E_\gamma=h_\alpha^\gamma(\mu(\kappa_\alpha))$.
\end{itemize}

It will become clearer why we restrict ourselves to the addable objects, once we define the notion of the one-step extension
in Definition \ref{onestep}.
To define a non-direct extension, we introduce more notation.
    If $f \in \mathcal{A}(\kappa_\alpha,\lambda)$ with domain $d$,
    and $\mu$ is an $\alpha$-$d$-object, we define $f \oplus \mu$ to be the
    function $g \in \mathcal{A}(\kappa_\alpha,\lambda)$ such that $\dom(g)=\dom(f)$, and
 \begin{align*}
    g (\gamma)=
\begin{cases}
\mu(\gamma) & \text{if }  \gamma \in \text{dom}(\mu), \\
f(\gamma) & \mbox{otherwise}. 
\end{cases}
\end{align*}  

Note that we obtain the function $g$ by simply overwrite previous values by $\mu$. For each condition $p$, $\beta<\alpha$, and $\mu \in E_\alpha(d_\alpha^p)$, we define $(f_\beta^p)_\mu$ as $f_\beta^p \circ \mu^{-1}$. For $l<2$ we define $(H_\beta^l)^p_\mu$ with domain $A_\beta^p \circ \mu^{-1}$, and $(H_\beta^l)^p_\mu(\xi^\prime)=(H_\beta^l)^p(\xi^\prime \circ \mu)$.

 \begin{defn}\label{onestep}(one-step extension) 
 Fix a condition $p \in \mathbb{P}$, $\alpha \not \in \supp(p)$, and an addable object $\mu \in A_\alpha^p$. The \textbf{one-step extension} of $p$ by $\mu$, denoted by $p+\mu$ is the condition $q$, where

\begin{enumerate}
    \item $\supp(q)=\supp(p) \cup \{\alpha\} $.
    \item $q \restriction [0,\max(\supp(p)\cap \alpha))=p \restriction [0,\max(\supp(p)\cap \alpha))$, and $q\setminus \alpha=p\setminus \alpha$.
    \item At the $\alpha$-th coordinate, we have
    \begin{enumerate}
        \item $f^q_\alpha=f^p_\alpha \oplus \mu$.
    \item  $\lambda_\alpha^q=\lambda_\alpha(\mu)$.
    \item for $l<2$, $(h_\alpha^l)^q=(H_\alpha^l)^p(\mu)$.
    \item $(h_\alpha^2)^q=(H_\alpha^2)^p(\mu(\kappa_\alpha))$.
    \end{enumerate}
    \item \label{squishblock} fix $\beta \in [\max(\supp(p)\cap \alpha),\alpha)$. Then
    \begin{enumerate}
        \item $f_\beta^q=(f_\beta^p)_\mu$.
        \item $A_\beta^q=A_\beta^p \circ \mu^{-1}$.
        \item For $l<2$, $(H_\beta^l)^q=(H_\beta^l)^p_\mu$.
        \item $(H_\beta^2)^q=(H_\beta^2)^p$.
    \end{enumerate}
\end{enumerate}
\end{defn}

Let's warm up and get familiar with our notations by showing that if $p$ is pure and $\mu \in A_\alpha^p$ is addable, then $q:=p+\mu$ is indeed a condition, as in Definition \ref{poset}.
 
%\textcolor{blue}{Things like Lemmas and Definitions don't require an article.}

\begin{itemize}
\item (\ref{cards}) $\supp(q)=\{\alpha\}$. $\lambda_\alpha^q=\lambda_\alpha(\mu)=s_\alpha(\mu(\kappa_\alpha))^{++}<\kappa_\alpha$. By our assumption of $s_\alpha$, $s_\alpha(\mu(\kappa_\alpha))^{++}>s_\alpha(\mu(\kappa_\alpha))>\overline{\kappa}_\alpha$.
\item (\ref{alphastar}),(\ref{domin}) It is easy to see that for $\beta \geq \alpha$, $\lambda_{\beta^*}=\lambda$. The part above $\alpha$ is not affected, and $\dom(f_\alpha^p \oplus \mu)=\dom(f_\alpha^p)$. Hence, $\dom(f_\beta^q)=\dom(f_\beta^p)$. For $\beta<\alpha$, $\beta^*=\alpha$, and $\dom(f_\beta^q)=\dom(f_\beta^p \circ \mu^{-1})=\mu[d_\beta^p]\subseteq \lambda_\alpha(\mu)=\lambda_\alpha^q$. Since $\mu$ fixes ordinals below $\kappa_\beta+1$, $d_\beta^q$ is a $\beta$-domain.  Also, $\lambda_\alpha(\mu)<\kappa_\alpha+1 \subseteq \dom (f_\alpha^q)$. Hence, the domains in $q$ are $\subseteq$-increasing.
\item (\ref{reflect}) $f_\alpha^q(\kappa_\alpha)=\mu(\kappa_\alpha)>\overline{\kappa}_\alpha$, and $\mu(\kappa_\alpha)$ is inaccessible. The rest follows by our definitions.
\end{itemize}

The rest of the proof is trivial for $\beta>\alpha$, and we assume $\beta<\alpha$.

\begin{itemize}
\item (\ref{measureone}) Follows from addability of $\mu$.
\item (\ref{impurecoll}) Follows from (\ref{purecoll}) in Definition \ref{poset}.
\item (\ref{purecoll}) For $l=0,1$, $\dom(H_\beta^l)^q=\dom(H_\beta^l)^p_\mu=A_\beta^p \circ \mu^{-1}=A_\beta^q$. Since $d_\beta^p \subseteq \dom(\mu)$, for $\psi \in A_\beta ^p$, and $l=0,1$, $\dom(\psi) \subseteq d_\beta^p \subseteq \dom(\mu)$, so  $(H_\beta^l)^q(\psi \circ \mu^{-1})=(H_\beta^l)^p(\psi \circ \mu^{-1} \circ \mu)=(H_\beta^l)^p(\psi)$. $\dom(H_\beta^2)^p=A_\beta^p(\kappa_\beta)=A_\beta^p \circ \mu^{-1}(\kappa_\beta)$ since $\mu$ fixes $\kappa_\beta$. Also,  all the collapses fall into the right types, since $\mu$ fixes $\kappa_\beta$ as well.
\end{itemize}

   %\textcolor{orange}
   %{
   %To check $(4)(c)$ is well-defined, for $l=0,1$, let $\xi_l^\prime=\xi_l \circ \mu^{-1}$ for some $\xi_l \in A_i^p$. Suppose $\overline{\xi^\prime}_0=\overline{\xi^\prime}_1$, i.e. $\xi_0^\prime \restriction \rho_\alpha(\mu)^+=\xi_1^\prime \restriction \rho_\alpha(\mu)^+$. Since $\mu$ is injective, it is enough to show that $\dom((\xi_0^\prime \circ \mu) \restriction \rho_{\alpha^*}^+)=\dom((\xi_1^\prime \circ \mu) \restriction \rho_{\alpha^*}^+)$. The point is for $l=0,1$ $ \dom(\xi_0^\prime \circ \mu) \cap \rho_{\alpha^*}^+=\dom(\xi_l) \cap \rho_{\alpha^*}^+$ and $\gamma \in \dom(\xi_l) \cap \rho_{\alpha^*}^+$ iff $\mu(\gamma) \in \mu[\dom(\xi_l)] \cap \rho_\alpha(\mu)^+=\dom((\xi_l^\prime) \restriction \rho_\alpha(\mu)^+)$.
   %}
   
 From Definition \ref{onestep}, for each $\beta<\alpha$, we sometimes write $q_\beta$ as $(p_\beta)_\mu$. When we perform a one-step extension of $p$ by $\mu$ from the $\alpha$-th coordinate, we call the part of the resulting condition before $\alpha$ ``$p \restriction \alpha$ {\em squished by} $\mu$". Occasionally, it is possible that we squish $p \restriction \alpha $ by some $\tau \in E_\alpha(d^\prime)$ for some $d^\prime \supseteq d$, where $d=\dom(f_\alpha^p)$. In this case $(p_\beta)_\tau=(p_\beta)_{\tau \restriction d}$.  For each sequence of objects $\vec{\mu}= \langle \mu_0,\mu_1,\dots,\mu_{k-1} \rangle$, we define $p+ \vec{\mu} $ recursively as
 $(p + \mu_0) + \langle \mu_1,\dots,\mu_{k-1} \rangle$ when $\mu_0$ is addable to $p$ and for every $0<i<k$,
 $\mu_i$ is addable to $p+\langle \mu_0, \dots, \mu_{i-1} \rangle$.

We define $p \leq q$ if $p \leq^* q+ \vec{\mu}$ for some sequence of objects $\vec{\mu}$. We refer the readers to Lemma 2.4 in \cite{ben2019} for some straightforward
technical lemmas on transitivity and commutativity of the choices of the orders of the objects we add to a condition. Briefly speaking, we ensure that the ordering $\leq$ is transitive.

%\textcolor{blue}{Either ``Let $p$ blah. We can see'' or (better) ``For each $p$ blah, we can see''. }

For each condition $p \in \mathbb{P}_{\langle E_\alpha : \alpha<\eta \rangle}$ with $\alpha \in \supp(p)$, we can see that $\mathbb{P} / p \cong (\mathbb{P}/p) \restriction \alpha \times (\mathbb{P} / p) \setminus \alpha$, where the first factor can be regarded as
$(\mathbb{P} /p)_{\langle h_\alpha^\beta(f_\alpha^p(\kappa_\alpha)) : \beta<\alpha \rangle}$. The second factor can be regarded as
  $(\mathbb{P} /p \setminus \alpha)_{\langle E_\beta : \alpha \leq \beta<\eta \rangle}$.
  These two factors are independent of each other, despite the connection between $d_\alpha$'s on different levels
    in clause \ref{domin} of Definition \ref{poset}.
    The point is that $d_\alpha^p \supseteq \kappa_\alpha+1$ and $\lambda_\alpha<\kappa_\alpha$.
    Hence, any kinds of extensions in the first factor
    do not harm  property \ref{domin} in Definition \ref{poset}.

We begin our analysis of the poset by computing
    its chain condition.
    Note that even though we change the value $\lambda=\rho^{+n}$ for some $n>2$ as stated in Theorem \ref{main}, the forcing is still be $\rho^{++}$-c.c.

%\textcolor{blue}{Either ``has the $\lambda$-chain condition'' or ``is $\lambda$-cc''.}

\begin{lemma}\label{chaincond}
If $\eta$ is a limit ordinal, then $\mathbb{P}_{\langle E_\alpha : \alpha<\eta \rangle}$ has the $\rho^{++}$-chain condition.
\end{lemma}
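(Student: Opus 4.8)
The plan is to show that $\mathbb{P}:=\mathbb{P}_{\langle E_\alpha:\alpha<\eta\rangle}$ has no antichain of size $\lambda=\rho^{++}$; equivalently, given any family $\langle p^\xi:\xi<\lambda\rangle$ of conditions I will thin it to a subfamily of size $\lambda$, any two members of which have a common direct extension. Since $\lambda$ is regular and, by GCH together with $|\eta|\le\rho$, we have $(\rho^+)^{|\eta|}=\rho^+<\lambda$, it suffices to attach to each condition a piece of ``relevant data'' lying in a set of size $<\lambda$ and to arrange that matching relevant data forces compatibility. First I apply the $\Delta$-system lemma to the sets $D^\xi=\bigcup_{\alpha<\eta}d_\alpha^{p^\xi}$; since $\eta$ is a limit, $|D^\xi|\le\sup_{\alpha<\eta}\kappa_\alpha\le\rho$, and the lemma applies because $(\rho^+)^{\rho}=\rho^+<\lambda$. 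This yields, on a subfamily of size $\lambda$, a root $R^*$ with $|R^*|\le\rho$ and $D^\xi\cap D^{\xi'}=R^*$ for distinct $\xi,\xi'$. Thinning further I fix the support $R$, the restricted Cohen parts $\langle f_\alpha\restriction R^*:\alpha<\eta\rangle$, and the (entire) impure collapse conditions $\langle (h_\alpha^l):\alpha\in R,\ l<3\rangle$; each ranges over fewer than $\lambda$ possibilities, the Cohen restrictions over at most $(\rho^+)^{|\eta|}=\rho^+$. After this step any two surviving conditions $p,q$ satisfy $d_\alpha^p\cap d_\alpha^q\subseteq R^*$ at every coordinate, so their Cohen parts union to functions, and their impure collapse parts and the values $\lambda_\alpha$ agree.

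The essential obstacle is the pure collapse parts. To build a common direct extension $r$ at a pure coordinate $\alpha\notin R$ I must take $d_\alpha^r=d_\alpha^p\cup d_\alpha^q$, pull $A_\alpha^p,A_\alpha^q$ back to $\OB_\alpha(d_\alpha^r)$ and intersect, and define $(H_\alpha^l)^r(\tau)$ as a common lower bound of $(H_\alpha^l)^p(\tau\restriction d_\alpha^p)$ and $(H_\alpha^l)^q(\tau\restriction d_\alpha^q)$. Because $H_\alpha^0,H_\alpha^1$ are defined on full objects rather than on a projected measure, these two collapse conditions need not be compatible, and compatibility on a set in $E_\alpha(d_\alpha^r)$ is exactly what is required. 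The key point is that this measure-one statement is decided in the ultrapower: using $\pi_{d_\alpha^r,d_\alpha^p}(\mc_\alpha(d_\alpha^r))=\mc_\alpha(d_\alpha^p)$, the set $\{\tau:(H_\alpha^l)^p(\tau\restriction d_\alpha^p)$ is compatible with $(H_\alpha^l)^q(\tau\restriction d_\alpha^q)\}$ lies in $E_\alpha(d_\alpha^r)$ if and only if the single derived conditions $b_\alpha^p:=j_{E_\alpha}((H_\alpha^0)^p)(\mc_\alpha(d_\alpha^p))$ and $b_\alpha^q$ are compatible in $\Col(\overline{\kappa}_\alpha^+,<\kappa_\alpha)^{M_\alpha}$, and likewise $c_\alpha^p:=j_{E_\alpha}((H_\alpha^1)^p)(\mc_\alpha(d_\alpha^p))$ and $c_\alpha^q$ are compatible in $\Col(\kappa_\alpha,\rho^+)^{M_\alpha}$ (here $\mc_\alpha(d_\alpha^p)(j_{E_\alpha}(\kappa_\alpha))=\kappa_\alpha$ and $j_{E_\alpha}(s_\alpha)(\kappa_\alpha)=\rho$). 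For $H_\alpha^2$, whose domain is already the projected set $A_\alpha(\kappa_\alpha)$, the analogous reduction uses the normal measure $E_\alpha(\kappa_\alpha)$ and asks that $a_\alpha^p:=j_{E_\alpha}((H_\alpha^2)^p)(\kappa_\alpha)$ and $a_\alpha^q$ be compatible in $\Col(\rho^{+3},<j_{E_\alpha}(\kappa_\alpha))^{M_\alpha}$.

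The resolution, and the reason the non-projected definition is harmless, is a counting argument. Each $b_\alpha^p$ has size $<\kappa_\alpha$ in $\Col(\overline{\kappa}_\alpha^+,<\kappa_\alpha)$ and so takes at most $\kappa_\alpha^+<\lambda$ values; each $c_\alpha^p$ is a condition of size $<\kappa_\alpha$ in $\Col(\kappa_\alpha,\rho^+)$, of which by GCH there are only $(\rho^+)^{<\kappa_\alpha}=\rho^+<\lambda$; and $a_\alpha^p$ is determined by the function $(H_\alpha^2)^p$, of which there are at most $\kappa_\alpha^+<\lambda$. Hence the full sequence $\langle(b_\alpha,c_\alpha,a_\alpha):\alpha<\eta\rangle$ attached to a condition ranges over at most $(\rho^+)^{|\eta|}=\rho^+<\lambda$ possibilities, so I may thin the surviving family once more to make this entire sequence constant. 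For any two conditions $p,q$ in the final subfamily the derived conditions then coincide, so the required sets are in the respective measures, and the pointwise common lower bounds defining $(H_\alpha^l)^r$ can be chosen (say, least in a fixed well-ordering). Assembling $r$ coordinate by coordinate---unions of Cohen parts on $R$, and off $R$ the pulled-back intersected measure-one sets together with pointwise greatest lower bounds of the collapse functions, finally intersected with the canonical sets $A_\alpha(d_\alpha^r)$---gives $r\leq^* p$ and $r\leq^* q$, and one checks that $r$ satisfies every clause of Definition \ref{poset}; the increasing-domain clause \ref{domin} survives because unions of increasing chains remain increasing. Thus $p$ and $q$ are compatible, the family contains no antichain of size $\lambda$, and $\mathbb{P}$ has the $\lambda$-chain condition. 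The main difficulty is precisely the pure-collapse compatibility of the second paragraph: once it is phrased in the ultrapower and the derived collapse conditions are seen to be drawn from a set of size $<\lambda$, the chain condition reduces to the routine $\Delta$-system-and-pigeonhole bookkeeping above.
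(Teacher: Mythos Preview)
Your proof is correct and follows essentially the same approach as the paper: reduce Cohen compatibility via a $\Delta$-system, and reduce pure-collapse compatibility by passing to the ultrapower representatives $j_{E_\alpha}((H_\alpha^l)^p)(\mc_\alpha(d_\alpha^p))$ (respectively $i_\alpha((H_\alpha^2)^p)(\kappa_\alpha)$), observing that these live in posets of size at most $\rho^+$ (respectively $2^{\kappa_\alpha}$), and then pigeonholing on the full $\eta$-sequence of representatives. The only difference is organizational: the paper first strips off the impure part by noting that the lower factor $\mathbb{P}_{\langle E_\beta\restriction\lambda_\alpha:\beta<\alpha\rangle}$ has size $<\lambda$ and so reduces to the pure case, whereas you handle the impure coordinates directly inside the global $\Delta$-system and pigeonhole on the small $h_\alpha^l$'s; both routes are equally valid.
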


\begin{proof}
  Suppose the conditions $\langle p^i : i<\lambda \rangle$ are given. The first step is to show that without loss of generality, we can assume $p^i$ is pure for all $i$. Since $|[\eta]^{<\omega}|<\lambda$, we assume every condition has the same support $s$.
  Suppose $s \neq \emptyset$, and let $\alpha=\max(s)$.
 
  Each condition breaks into two pieces, with the first piece in $\mathbb{P}_{\langle h_\alpha^\beta(f_\alpha^{p^i}(\kappa_\alpha)): \beta<\alpha \rangle}$ Since the number of the conditions in the first factor is at most $\kappa_\alpha<\lambda$, we assume that every condition's first factor is the same, also, every condition has the same $\lambda_\alpha$. Now, for each $i<\lambda$, $\dom(f_\alpha^{p^i}) \in [\lambda]^{\kappa_\alpha}$. We thin out the collection $\{p^i:i<\lambda\}$ so that $\{\dom(f_\alpha^{p^i}) : i<\lambda\}$ form a $\Delta$-system. Then thin out the collection again so that the $\alpha$-th Cohen parts are all compatible. Since all the $\alpha$-th collapse parts are small, by the Pigeonhole Principle  we can also thin out the collection so that every condition has the same collapses at the $\alpha$-th coordinate. Hence, the initial segments up to and including $\alpha$ are compatible.

  Since we can shrink the collection $\{p^i:i<\lambda\}$ so that the impure parts are compatible, we now may assume all
  the conditions  $p^i$ are pure. Since $\vert \bigcup_{i < \eta} \dom(f_\alpha^{p^i}) \vert \le \overline{\kappa}_\eta \le \rho$,
    by a similar $\Delta$-system argument, we can thin out so that the
    Cohen conditions $f_\alpha^{p^i}$ are pairwise compatible in ${\mathcal A}(\kappa_\alpha, \lambda)$
    for each $\alpha<\eta$. Measure one sets are always compatible. It remains to find a subcollection of $\{p^i:i<\lambda\}$
 so that all the collapse parts $(H_\alpha^l)^{p^i}$ are compatible. Note that the collapses are no longer small.

For $l=0$ and $l=1$,  $(H_\alpha^l)^{p^i}$ represents $j_{E_\alpha}((H_\alpha^l)^{p^i})(\mc_\alpha(d_\alpha))$,
   which is a condition in $\Col(\overline{\kappa}_\alpha^+,<\kappa_\alpha)$ when $l=0$, and a condition in $\Col(\kappa_\alpha,\rho^+)$ for $l=1$.
   Recall that $\lambda=\rho^{++}$.
   These two collapse forcings both have size $\rho^+$ in $V$. The situation is slightly different for $(H_\alpha^2)^{p^i}$,
     a function whose domain is in $E_\alpha(\kappa_\alpha)$: if we let $i_\alpha: V \rightarrow N_\alpha = Ult(V, E_\alpha(\kappa_\alpha))$,
     then we can view this function as representing  $i_\alpha((H_\alpha^2)^{p^i})(\kappa_\alpha)$, a condition
     in $\Col(i_\alpha(s_\alpha)(\kappa_\alpha)^{+++}, < i_\alpha(\kappa_\alpha))^{N_\alpha}$. This poset has size $2^{\kappa_\alpha}$ in $V$.

 Recalling that $\eta < \kappa_0$, it follows that there are at most $\rho^+$ possibilities for the sequence
     of conditions
     $\langle j_{E_\alpha}((H_\alpha^0)^{p^i})(\mc_\alpha(d_\alpha)), j_{E_\alpha}((H_\alpha^1)^{p^i})(\mc_\alpha(d_\alpha)), i_\alpha((H_\alpha^2)^{p^i})(\kappa_\alpha):
     \alpha < \eta \rangle$.
     Hence, we can thin out the collection of the conditions so that for all $\alpha<\eta$, $i<\lambda$, and $l<3$,
     the functions $(H_\alpha^l)^{p^i}$ represent the same function in $M_\alpha$ or $N_\alpha$.
     It is now easy to show that the resulting collection of conditions is pairwise compatible.
\end{proof}

The following lemma is straightforward:

\begin{lemma}
$((\mathbb{P}/p) \setminus \alpha, \leq^*)$ is $\overline{\kappa}_\alpha^+$-closed.
\end{lemma}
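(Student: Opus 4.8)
The plan is to verify the closure directly. Fix a $\leq^*$-decreasing sequence $\langle q^i : i < \theta \rangle$ in $(\mathbb{P}/p)\setminus\alpha$ with $\theta < \overline{\kappa}_\alpha^+$, i.e.\ $\theta \le \overline{\kappa}_\alpha$; I will produce a greatest lower bound $q$. Since $\leq^*$ preserves supports, all the $q^i$ share one finite support $s \subseteq [\alpha,\eta)$, and for $\beta \in s$ the values $\lambda_\beta^{q^i}$ and $f_\beta^{q^i}(\kappa_\beta)$ do not depend on $i$ (the latter because a direct extension only enlarges the domain of the Cohen part, and $\kappa_\beta$ already lies in every such domain). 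I will build $q$ coordinate by coordinate, taking unions of the Cohen and collapse parts, intersections of the measure-one sets, and pointwise lower bounds of the collapse functions, and then check that $q$ is a condition with $q \leq^* q^i$ for all $i$. The uniform quantitative input is that every Cohen and collapse poset occurring at a coordinate $\beta \ge \alpha$ is at least $\overline{\kappa}_\alpha^+$-closed, so that $\leq^*$-decreasing $\theta$-sequences in each of them have greatest lower bounds.

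At a coordinate $\beta \in s$ I put $f_\beta^q = \bigcup_i f_\beta^{q^i}$, keep $\lambda_\beta^q = \lambda_\beta^{q^i}$, and set $(h_\beta^l)^q = \bigcup_i (h_\beta^l)^{q^i}$ for $l < 3$. The Cohen union is a legitimate element of $\mathcal{A}(\kappa_\beta,\lambda_{\beta^*})$: it is a union of $\theta \le \overline{\kappa}_\alpha < \kappa_\beta$ compatible partial functions, each with a domain of size $\kappa_\beta$ containing $\kappa_\beta + 1$, so the union again has domain of size $\kappa_\beta$ and contains $\kappa_\beta+1$; and $\mathcal{A}(\kappa_\beta,\lambda_{\beta^*})$ is $\kappa_\beta^+$-closed. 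For the collapses, $\Col(\overline{\kappa}_\beta^+,<f_\beta(\kappa_\beta))$ is $\overline{\kappa}_\beta^+$-closed, $\Col(f_\beta(\kappa_\beta),\rho_\beta^+)$ is $f_\beta(\kappa_\beta)$-closed with $f_\beta(\kappa_\beta)$ inaccessible above $\overline{\kappa}_\beta$, and $\Col(\lambda_\beta^+,<\kappa_\beta)$ is $\lambda_\beta^+$-closed; since $\theta \le \overline{\kappa}_\alpha \le \overline{\kappa}_\beta < f_\beta(\kappa_\beta) < \lambda_\beta$, each union is a lower bound in the relevant poset.

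At a coordinate $\beta \in [\alpha,\eta)\setminus s$ I set $d_\beta^q = \bigcup_i d_\beta^{q^i}$ and $f_\beta^q = \bigcup_i f_\beta^{q^i}$ as before; the domains stay $\subseteq$-increasing in $\beta$ because each $q^i$ has this property and unions preserve inclusions. For the measure-one set I take
\[
A_\beta^q = A_\beta(d_\beta^q) \cap \{\mu \in \OB_\beta(d_\beta^q) : \mu(\kappa_\beta) > \overline{\kappa}_\alpha\} \cap \bigcap_{i<\theta} \pi_{d_\beta^q,d_\beta^{q^i}}^{-1}\big[A_\beta^{q^i}\big].
\]
Each preimage lies in $E_\beta(d_\beta^q)$ because $\pi_{d_\beta^q,d_\beta^{q^i}}$ projects $E_\beta(d_\beta^q)$ onto $E_\beta(d_\beta^{q^i})$, and the tail $\{\mu : \mu(\kappa_\beta) > \overline{\kappa}_\alpha\}$ is measure one; since this is an intersection of fewer than $\kappa_\beta$ sets, $\kappa_\beta$-completeness of $E_\beta(d_\beta^q)$ gives $A_\beta^q \in E_\beta(d_\beta^q)$, and by construction $A_\beta^q$ projects into each $A_\beta^{q^i}$. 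I then define $(H_\beta^l)^q$ on $A_\beta^q$ for $l < 2$ by $(H_\beta^l)^q(\mu) = \bigcup_i (H_\beta^l)^{q^i}(\mu\restriction d_\beta^{q^i})$, and $(H_\beta^2)^q$ on $A_\beta^q(\kappa_\beta)$ by $(H_\beta^2)^q(\nu) = \bigcup_i (H_\beta^2)^{q^i}(\nu)$; the definition of $\leq^*$ makes each of these pointwise families $\leq^*$-decreasing inside a single collapse poset. For fixed $\mu$ with $\nu = \mu(\kappa_\beta)$ the $H_\beta^0$-values lie in the $\overline{\kappa}_\beta^+$-closed poset $\Col(\overline{\kappa}_\beta^+,<\nu)$ and the $H_\beta^2$-values lie in the $s_\beta(\nu)^{+3}$-closed poset $\Col(s_\beta(\nu)^{+3},<\kappa_\beta)$, where $s_\beta(\nu) > \overline{\kappa}_\beta$, so unions exist. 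The one genuine subtlety is $H_\beta^1$, whose values lie in $\Col(\nu,s_\beta(\nu)^+)$, which is only $\nu$-closed: here I need $\theta < \nu$, and this is exactly why $A_\beta^q$ was thinned to $\mu(\kappa_\beta) > \overline{\kappa}_\alpha \ge \theta$. This is the main obstacle, and it is resolved by that single shrinking of the measure-one set.

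It remains to confirm that $q = \langle q_\beta : \alpha \le \beta < \eta \rangle$ is a condition and a lower bound, which is routine: the clauses of Definition \ref{poset} hold coordinatewise by construction (the unions preserve inaccessibility of $f_\beta(\kappa_\beta)$ and the collapse types, and the domains are $\subseteq$-increasing), while $q \leq^* q^i$ follows since supports agree, the Cohen and collapse parts extend those of $q^i$, the $\lambda_\beta$ agree on $s$, and each $A_\beta^q$ projects into $A_\beta^{q^i}$. Apart from the treatment of $H_\beta^1$, the whole argument is a bookkeeping of union/intersection closure.
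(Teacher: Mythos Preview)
Your argument is correct and is exactly the routine verification the paper has in mind; the paper states the lemma as ``straightforward'' and gives no proof. You have also correctly isolated the one genuinely nontrivial point---that the $H_\beta^1$-values live in $\Col(\nu,s_\beta(\nu)^+)$, which is only $\nu$-closed, forcing you to shrink $A_\beta^q$ to objects with $\mu(\kappa_\beta)>\overline{\kappa}_\alpha$---which is precisely the detail a reader filling in ``straightforward'' would need to notice.
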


The proof of the Prikry property here requires some ``integration" of the conditions.
The following lemmas (Lemma \ref{compute} and Lemma \ref{integrate}) show that we can assemble the conditions properly. We state the lemmas for a pure condition. In general, a condition may not be pure, and each impure coordinate divides the condition into blocks. The lemmas can be stated in each separate block individually.

\begin{lemma}\label{compute}
Let $p=\langle p_\alpha : \alpha<\eta \rangle$ be a pure condition and $\alpha \not \in \supp(p)$. Let $d=\dom(f_\alpha^p)$. Set $q=j_{E_\alpha}(p)+\mc_\alpha(d)$. Then $q \restriction \alpha= p \restriction \alpha$.
\end{lemma}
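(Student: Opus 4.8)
The plan is to unwind both the application of the embedding $j := j_{E_\alpha}$ and the subsequent squishing by $\mc := \mc_\alpha(d) = (j \restriction d)^{-1}$, and to observe that the two operations cancel on every coordinate $\beta < \alpha$. First I would record the ambient smallness facts: $\crit(j) = \kappa_\alpha$, and for $\beta < \alpha$ we have $\kappa_\beta < \kappa_\alpha$, so by GCH every object, Cohen part, and collapse condition appearing in $p_\beta$ is hereditarily of size $< \crit(j)$ and is therefore fixed pointwise by $j$. Since $\beta < \eta < \crit(j)$, we get $j(p)_\beta = j(p_\beta)$; and since $p$ is pure so is $j(p)$, whence $\supp(j(p)) = \emptyset$ and $\supp(q) = \{\alpha\}$, so every coordinate $\beta < \alpha$ of $q = j(p) + \mc$ is obtained by squishing $j(p)_\beta$ by $\mc$ via clause \ref{squishblock} of Definition \ref{onestep}. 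I would also note $\mc \in j(A_\alpha^p)$ (because $A_\alpha^p \in E_\alpha(d)$) and that $\mc$ is addable to $j(p)$, which is precisely the content of the addability computation carried out after Definition \ref{addability}, so the one-step extension is legitimate and $q \restriction \alpha$ is a well-defined sequence of pure (five-component) coordinates.

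Next I would verify the Cohen and measure-one components. Fixing $\beta < \alpha$, the squishing clause gives $f_\beta^q = j(f_\beta^p) \circ \mc^{-1}$; since $\dom(f_\beta^p) = d_\beta^p \subseteq d_\alpha^p = d$ by the $\subseteq$-increasing clause \ref{domin} and $\rge(f_\beta^p) \subseteq \kappa_\beta < \crit(j)$, the identity $j(\psi) \circ \mc^{-1} = \psi$ from the addability bullets yields $f_\beta^q = f_\beta^p$. Likewise $A_\beta^q = j(A_\beta^p) \circ \mc^{-1}$; using $|A_\beta^p| \le 2^{\kappa_\beta} < \crit(j)$ we have $j(A_\beta^p) = j[A_\beta^p]$, and applying the same identity to each $\psi \in A_\beta^p$ recovers $A_\beta^q = \{ j(\psi) \circ \mc^{-1} : \psi \in A_\beta^p \} = A_\beta^p$, which is exactly the final addability bullet.

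Finally I would treat the collapse components. The key computation is that for an object $\psi$ with $\dom(\psi) \subseteq d$ and $\rge(\psi) \subseteq \crit(j)$ one has $\psi \circ \mc = j(\psi)$ (partial composition over $\dom(\mc) = j[d]$). Thus for $l < 2$ and $\psi \in A_\beta^q = A_\beta^p$, the squishing clause gives $(H_\beta^l)^q(\psi) = j((H_\beta^l)^p)(\psi \circ \mc) = j((H_\beta^l)^p)(j(\psi)) = j\bigl((H_\beta^l)^p(\psi)\bigr)$ by elementarity; since each value $(H_\beta^l)^p(\psi)$ is a collapse condition of hereditary size $< \kappa_\beta < \crit(j)$ (the types are as in clause \ref{H0H1coll} of Definition \ref{poset}), $j$ fixes it, so $(H_\beta^l)^q = (H_\beta^l)^p$. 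For $H^2$, the clause gives $(H_\beta^2)^q = j((H_\beta^2)^p)$, whose domain $A_\beta^p(\kappa_\beta) \subseteq \kappa_\beta$ and whose values are collapse conditions hereditarily below $\crit(j)$, so $j$ acts as the identity and $(H_\beta^2)^q = (H_\beta^2)^p$. Assembling the five components gives $q_\beta = p_\beta$ for every $\beta < \alpha$, i.e. $q \restriction \alpha = p \restriction \alpha$. I expect the only delicate points to be the bookkeeping of the partial compositions (keeping domains straight when applying $\mc^{-1} = j \restriction d$) and the measure-one scaling $j(A_\beta^p) = j[A_\beta^p]$; both are already isolated in the addability analysis, so the lemma reduces to reading off Definition \ref{onestep} against those identities.
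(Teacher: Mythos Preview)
Your approach is the same as the paper's---unwind the squishing of $j(p)_\beta$ by $\mc$ and observe it cancels $j$ on each component---and your detailed computations for the five components are correct and in fact more thorough than the paper, which only writes out the Cohen part and defers the rest to ``similar calculation.''

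However, your opening sentence contains a false assertion that you should delete: the Cohen part $f_\beta^p$ and the objects $\psi \in A_\beta^p$ are \emph{not} hereditarily of size below $\crit(j)$, since their domains are subsets of $\lambda$ that may contain ordinals $\geq \kappa_\alpha$, and hence $j$ does not fix them. Fortunately you never actually use this claim; your later computations correctly use only that $j(d_\beta^p) = j[d_\beta^p]$ and $j(A_\beta^p) = j[A_\beta^p]$ (pointwise images), together with the identity $j(\psi) \circ \mc^{-1} = \psi$ for $\psi$ with domain in $d$ and range below $\crit(j)$. The collapse \emph{values} $(H_\beta^l)^p(\psi)$ and the function $(H_\beta^2)^p$ are genuinely hereditarily small, which is what you need and use. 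Just strike the misleading sentence.
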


\begin{proof}

Fix $\beta<\alpha$. Since $|d_\beta^p|<\kappa_\alpha$, $j_{E_\alpha}(d_\beta^p)=j_{E_\alpha}[d_\beta^p]$. Hence, $d_\beta^q=\mc_\alpha(d)[j_{E_\alpha}(d_\beta^p)]=d_\beta^p$. Furthermore, for $\gamma \in d_\beta^q, f_\beta^q(\gamma)=j_{E_\alpha}(f_\beta^p) \circ (\mc_\beta(d))^{-1}(\gamma)=j_{E_\alpha}(f_\beta^p)(j_{E_\alpha}(\gamma))=j_{E_\alpha}(f_\beta^p(\gamma))=f_\beta^p(\gamma)$. This shows that $f_\beta^q=f_\beta^p$. A similar calculation shows that other components in $q_\beta$ are the same as the corresponding components in $p_\beta$.

\end{proof}

Further calculations show

\begin{lemma}\label{integrate}(The integration lemma)
Assume $\langle p_\alpha : \alpha<\eta \rangle$ is pure. Fix $\alpha<\eta$. $d= \dom(f_\alpha^p)$. Let $f \leq f_\alpha ^p$ with $\dom(f)=d^\prime$.  Let $A \in E_\alpha(d^\prime)$, and $A$ projects down to a subset of $A_\alpha^p$. For $\tau \in A$, denote $(p \restriction \alpha)_\tau=(p \restriction \alpha)_{\tau \restriction d}=(p +(\tau \restriction d ))\restriction \alpha$. Suppose for each $\tau \in A$, there is a condition $t(\tau) \leq^* (p \restriction \alpha)_{\tau}$, $h^0(\tau) \leq (H_\alpha^0)^p(\tau \restriction d)$, and $h^1(\tau) \leq (H_\alpha^1)^p (\tau \restriction d)$. Then there is a condition $q \leq^*p$ such that if $\psi \in A_\alpha^q$, $\tau=\psi \restriction d^\prime$, and $\nu=\tau(\kappa_\alpha)=\psi(\kappa_\alpha)$, we have:
\begin{enumerate}
        \item \label{in1} If $\beta<\alpha$ and $g$ is the Cohen part of $(q+\psi)_\beta$, then $g=f_\beta^{t(\tau)}$.
    \item \label{in2} $(q+\psi) \restriction \alpha \leq^*t(\tau)$.
    \item \label{in3}For $l=0,1$, $(H_\alpha^l)^q(\psi)=h^l(\tau)$.
    \item \label{in4}$(H_\alpha^2)^q(\nu)=(H_\alpha^2)^p(\nu)$ .
	\item \label{in5} $f_\alpha^q \leq f$.
\end{enumerate}
\end{lemma}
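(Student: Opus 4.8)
The plan is to integrate the family $\langle t(\tau), h^0(\tau), h^1(\tau) : \tau \in A \rangle$ through the ultrapower $j := j_{E_\alpha} : V \to M_\alpha$, using $\mc_\alpha(d')$ as the ``generic'' object on $d'$. Since $A \in E_\alpha(d')$ we have $\mc_\alpha(d') \in j(A)$, so in $M_\alpha$ we may form $T := j(t)(\mc_\alpha(d'))$ together with $j(h^0)(\mc_\alpha(d'))$ and $j(h^1)(\mc_\alpha(d'))$. First I would identify the objects these sit below. Because $d \subseteq d'$, elementarity gives $j[d'] \cap j(d) = j[d]$, whence $\mc_\alpha(d') \restriction j(d) = \mc_\alpha(d)$; applying $j$ to the map $\tau \mapsto (p \restriction \alpha)_{\tau \restriction d}$ and evaluating at $\mc_\alpha(d')$ therefore yields $(j(p) \restriction \alpha)_{\mc_\alpha(d)}$, which equals $p \restriction \alpha$ by Lemma \ref{compute}. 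By {\L}o\'s's theorem we conclude $T \leq^* p \restriction \alpha$, and likewise $j(h^l)(\mc_\alpha(d')) \leq j((H_\alpha^l)^p)(\mc_\alpha(d))$, the collapse condition represented by $(H_\alpha^l)^p$. Note that $p \restriction \alpha$, $T$, and these collapse conditions all lie in $M_\alpha \subseteq V$, using that $M_\alpha$ is closed under $\kappa_\alpha$-sequences and that the relevant objects have size below $\kappa_\alpha$.

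The crux is then to realise $T$ as a single condition of $V$ below coordinate $\alpha$ of $p$, in such a way that squishing by a generic $\psi$ reproduces it exactly. The point is that although the Cohen domains $d_\beta^T := \dom(f_\beta^T)$ need not be contained in $\rge(\mc_\alpha(d')) = d'$, each has size $\kappa_\beta < \kappa_\alpha$ and there are fewer than $\kappa_\alpha$ of them (as $\alpha < \eta < \kappa_0 \leq \kappa_\alpha$ and $\kappa_\alpha$ is inaccessible), so $\bigcup_{\beta<\alpha} d_\beta^T$ has size below $\kappa_\alpha$. I would therefore fix an $\alpha$-domain $d_\alpha^q \supseteq d' \cup \bigcup_{\beta<\alpha} d_\beta^T$ and define $q$ as the pure condition with: $f_\alpha^q$ any extension of $f$ to domain $d_\alpha^q$ (securing \ref{in5}); $q_\beta := T_\beta$ verbatim for $\beta < \alpha$ (so $f_\beta^q = f_\beta^T$, $A_\beta^q = A_\beta^T$, and the lower collapses inherited from $T$); $(H_\alpha^l)^q(\psi) := h^l(\psi \restriction d')$ for $l < 2$ (securing \ref{in3}); and $(H_\alpha^2)^q := (H_\alpha^2)^p \restriction A_\alpha^q(\kappa_\alpha)$ (securing \ref{in4}). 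Because $d_\beta^T \subseteq d_\alpha^q$, a computation with the generator $\mc_\alpha(d_\alpha^q)$, whose restriction to $j(d')$ is again $\mc_\alpha(d')$ so that $T$ is unchanged, gives $j(f_\beta^q) \circ \mc_\alpha(d_\alpha^q)^{-1} = f_\beta^T$, and similarly for the measure-one sets and collapses of the lower coordinates.

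Finally I would let $A_\alpha^q$ be the set of those $\psi$ lying in the measure-one set of Lemma \ref{sparseuf}, addable to $q$ in the sense of Definition \ref{addability}, projecting into $A$, and for which squishing by $\psi$ satisfies, with $\tau = \psi \restriction d'$, that $(f_\beta^q)_\psi = f_\beta^{t(\tau)}$ for all $\beta < \alpha$ and $(q+\psi) \restriction \alpha \leq^* t(\tau)$. Every one of these requirements holds of $\mc_\alpha(d')$ in $M_\alpha$ by the previous paragraph, so by {\L}o\'s's theorem $A_\alpha^q \in E_\alpha(d_\alpha^q)$; this yields \ref{in1} and \ref{in2} simultaneously, and by construction $A_\alpha^q$ projects to $A_\alpha^p$. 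It then remains to verify $q \leq^* p$: the supports agree (both empty), $f_\beta^q = f_\beta^T \leq f_\beta^p$ and $f_\alpha^q \leq f \leq f_\alpha^p$ extend the Cohen parts of $p$, the measure-one sets project down since $T \leq^* p \restriction \alpha$ and $A_\alpha^q \subseteq A$, $(H_\alpha^2)^q$ agrees with $(H_\alpha^2)^p$, and $(H_\alpha^l)^q(\psi) = h^l(\tau) \leq (H_\alpha^l)^p(\psi \restriction d)$ for $l < 2$.

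The main obstacle is the one isolated in the second paragraph: producing a \emph{single} $q$ whose lower coordinates reflect $T$ exactly, so that the Cohen parts match on the nose as \ref{in1} demands rather than merely up to direct extension. This forces the careful choice of the enlarged Cohen domain $d_\alpha^q$ absorbing the reflected domains $d_\beta^T$, and is precisely why the lemma is phrased with $A \in E_\alpha(d')$ on a domain $d'$ possibly larger than $d$ and with $f$ of domain $d'$. The remaining verifications---that each $A_\beta^T \in E_\beta(d_\beta^q)$ as computed in $V$ (using coherence, so that $E_\beta \in M_\alpha$ and is computed correctly below $\lambda$ by assumption \ref{exincrease}), and that the squishing identities transfer from $\mc_\alpha(d')$ to almost every $\psi$---are routine applications of {\L}o\'s's theorem together with the addability analysis following Definition \ref{addability}.
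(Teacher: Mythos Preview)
Your approach is correct and is essentially the same as the paper's: both integrate the family $\langle t(\tau)\rangle$ by forming $T=j_{E_\alpha}(t)(\mc_\alpha(d'))$, setting $q\restriction\alpha=T$, enlarging the $\alpha$-domain to absorb $\bigcup_{\beta<\alpha}d_\beta^T$, and then arranging a measure-one set of $\psi$ on which the squish recovers $t(\psi\restriction d')$. The only notable difference is in execution: the paper defines $A_\alpha^q$ explicitly via an enumeration of $d_\beta^T$ and must perform a ``cosmetic'' shrinking $t\to t^*$ so that the level-$\beta$ collapses agree pointwise, whereas you simply \emph{define} $A_\alpha^q$ as the set of $\psi$ for which \ref{in1} and \ref{in2} hold and invoke {\L}o\'s (the check at $\mc_\alpha(d_\alpha^q)$ reduces, by the computation in Lemma~\ref{compute}, to $T\leq^*T$); this is slicker and sidesteps the modification, but you should remark that the defining conditions for $A_\alpha^q$ refer only to $q\restriction\alpha$ (already fixed) so no circularity arises, and you should spell out the natural extension of $q_\beta$ for $\beta>\alpha$ as the paper does in (A4).
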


\begin{proof}
  Set $r = j_{E_\alpha}(t)(\mc_\alpha(d^\prime))$, where $t$ is considered as a function $\tau \mapsto t(\tau)$. Note that $r$ is a condition in the forcing up to $\alpha$ by a routine calculation. We recall that $r = \langle r_\beta: \beta<\alpha \rangle$, where for each $\beta<\alpha$,
\begin{center}
$r_\beta=\langle  f_\beta^r,A_\beta^r,(H_\beta^0)^r,(H_\beta^1)^r,(H_\beta^2)^r\rangle$.
\end{center}

 Fix $\beta<\alpha$. Here are some properties  of $r_\beta$:
\begin{enumerate}[label=(A\arabic*)]
\item \label{A1}
\begin{enumerate}
    \item \label{in1.1} Let $x_\beta=d_\beta^r \cap \kappa_\alpha$. Then $x_\beta$ is a bounded subset of $\kappa_\alpha$.
    \item \label{in1.2} Let $\gamma_\beta=\ot(d_\beta^r \setminus \kappa_\alpha)$. Then $\gamma_\beta<\kappa_\beta^+<\kappa_\alpha$.
    \item \label{in1.3} $A_\beta^r \in E_\beta(d_\beta^r)$.
\end{enumerate}
 \ref{in1.1} and \ref{in1.2} follow from the fact that $\dom(f_\beta^{t(\tau)})$ has size $\kappa_\beta$ for all $\tau$. 
 \ref{in1.3} follows from the fact that $j_{E_\alpha}(\mu \mapsto h_\alpha^\beta(\mu(\kappa_\alpha)))(\mc_\alpha(d^\prime))=E_\beta$.
 Fix an increasing enumeration of $d_\beta^r \setminus \kappa_\alpha$ as $\{\xi_{\beta,i}^r:i<\gamma_\beta\}$.  
For each $\tau \in A_\alpha^p$, let 
\begin{center}
    $t(\tau)=\langle\langle f_\beta^\tau,A_\beta^\tau,(H_\beta^0)^\tau,(H_\beta^1)^\tau,(H_\beta^2)^\tau \rangle : \beta<\alpha \rangle $
\end{center}
and let $d_\beta^\tau=\dom(f_\beta^\tau)$.

We record a few equations for each $\beta<\alpha$:

\begin{enumerate}[label=(\roman*)]
\item If $l=0,1$, we have 
  \label{in2.1}\begin{align*}
   j_{E_\beta}((H_\beta^l)^r)(\mc_\beta(d_\beta^r)) &= j_{E_\beta}(j_{E_\alpha}(\tau \mapsto (H_\beta^l)^\tau)(\mc_\alpha(d^\prime)))(j_{E_\alpha}(\tau \mapsto \mc_\beta(d_\beta^\tau)(\mc_\alpha(d^\prime)))) \\
&=j_{E_\alpha}(\tau \mapsto j_{h_\alpha^\beta(\tau(\kappa_\alpha))}(H_\beta^l)^\tau(\mc_\beta(d_\beta^\tau)))(\mc_\alpha(d^\prime)).
\end{align*}
\item \label{in2.2}\begin{align*}
    j_{E_\beta}((H_\beta^2)^r)(\kappa_\beta) &=j_{E_\beta}(j_{E_\alpha}(\tau \mapsto(H_\beta^2)^\tau)(\mc_\alpha(d^\prime)))(j_{E_\alpha}(\tau \mapsto \kappa_\beta)(\mc_\alpha(d^\prime))) \\
    &=j_{E_\alpha}(\tau \mapsto j_{h_\alpha^\beta(\tau(\kappa_\alpha))}(H_\beta^2)^\tau(\kappa_\beta))(\mc_\alpha(d^\prime)).
\end{align*}
\end{enumerate}

Note that by Lemma \ref{compute}, $A_\beta^r=j_{E_\alpha}(A_\beta^r)_{\mc_\alpha(d^\prime)}$ . We shrink $A$ to $A^*$ so that for every $\tau \in A^*$, and every $\beta<\alpha$, the following statements hold:
\item \label{A2}
\begin{enumerate}
    \item \label{in3.1}$d_\beta^\tau \cap \kappa_\alpha=x_\beta$.
    \item \label{in3.2}$\ot(d_\beta^\tau \setminus \kappa_\alpha)=\gamma_\beta$.
    \item \label{in3.3}For $\xi\in d_\beta^\tau \cap \kappa_\alpha$, $f_\beta^\tau(\xi)=f_\beta^r(\xi)$.
    \item  \label{in3.4} Let $\{\xi_{\beta,i}^\tau: i<\gamma_\beta\}$ be the increasing enumeration of $d_\beta^\tau \setminus \kappa_\alpha$, then for all $i<\gamma_\beta$, $f_\beta^\tau(\xi_{\beta,i}^\tau)=f_\beta^r(\xi_{\beta,i}^r)$.
    \item  \label{in3.5} $A_\beta^\tau=A_\beta^r \circ \tau^{-1}$.
    \item  \label{in3.6}For $l=0,1$,
 $j_{h_\alpha^\beta(\tau(\kappa_\alpha))}((H_\beta^l)^\tau)(\mc_\beta(d_\beta^\tau))=j_{E_\beta}((H_\beta^l)^r)(\mc_\beta(d_\beta^r))$.
    \item  \label{in3.7} $j_{h_\alpha^\beta(\tau(\kappa_\alpha))}((H_\beta^2)^\tau)(\kappa_\beta)=j_{E_\beta}((H_\beta^2)^r)(\kappa_\beta)$.
    \end{enumerate}

It is tempting to think we will set $q_\beta$ to be $r_\beta$ for all $\beta < \alpha$,
  but we need to make a tiny cosmetic modification. The property of $\tau$ in \ref{in3.6} and \ref{in3.7} makes sure that
  for each $\tau \in A^*$, $\beta<\alpha$, and $l=0,1$,
\begin{align*}
C_{\beta,l}:=\{\psi \in h_\alpha^\beta(\tau(\kappa_\alpha))(d_\beta^\tau): (H_\beta^l)^\tau(\psi)=(H_\beta^l)^r(\psi \circ \tau)\} \in h_\alpha^\beta(\tau(\kappa_\alpha))(d_\beta^\tau),
\end{align*}

and

\begin{align*}
C_{\beta,2}:=\{\psi \in h_\alpha^\beta(\tau(\kappa_\alpha))(d_\beta^\tau): (H_\beta^2)^\tau(\psi(\kappa_\beta))=(H_\beta^2)^r(\psi (\kappa_\beta)) \} \in h_\alpha^\beta(\tau(\kappa_\alpha))(d_\beta^\tau).    
\end{align*}

Shrink all measure one sets appearing in $t(\tau)$ so that every $\beta$-object belongs to $C_{\beta,0} \cap C_{\beta,1} \cap C_{\beta,2}$. Restrict the collapsing functions in the natural way, and call the result $t^*(\tau)$. Instead of integrating the function $t$ to obtain $r$, we will integrate the function $t^*$ in the same manner to obtain $\langle q_\beta : \beta<\alpha \rangle$.

Since we just shrank the measure one sets and restricted the collapses, all the properties
in the \ref{A2}-list hold, except that the property \ref{in3.5} is weakened to: $(A_\beta^\tau)^* \subseteq A_\beta^r \circ \tau^{-1}$, where $(A_\beta^\tau)^*$ is the measure one set of the $\beta$th-coordinate of $t^*(\mu)$

Now for $\beta<\alpha$, let $q_\beta=j_{E_\alpha}(t^*)(\mc_\alpha(d^\prime))$. Note that $q_\beta$ is almost identical to $r_\beta$, except the measure one sets: $A_\beta^q \subseteq A_\beta^r$. 
We now define $q_\alpha$. 
 Set $q_\alpha= \langle f_\alpha^q,A_\alpha^q,(H_\alpha^0)^q,(H_\alpha^1)^q,(H_\alpha^2)^q \rangle$ as follows:

%\textcolor{orange}
%{
%First, use Lemma \ref{changedom} on the functions $\mu \mapsto h^0(\mu)$ and $\mu \mapsto h^1(\mu)$ to obtain an $\alpha$-domain $d^\prime \subseteq \rho^+$ and functions $H^0$ and $H^1$ so that the set 
%\begin{center}
 %   $B_l:=\{\tau \in \OB_\alpha(d \cup d^\prime): h^l(\tau \restriction d)=H^l(\tau \restriction d^\prime)\}$
%\end{center}
%belongs to $E_\alpha(d \cup d^\prime)$, for $l=0,1$.
%}

\item \label{A3}
\begin{enumerate}
\item \label{in4.1} $f_\alpha^q$ has domain $d^\prime \cup  \bigcup\limits_{\beta<\alpha}(x_\beta \cup \{\xi_{\beta,i}^r: i<\gamma_\beta\})$. This is just $d^\prime \cup \bigcup_{\beta<\alpha}d_\beta^r$. 
\item \label{in4.2} For each $\xi \in \dom(f_\alpha^q)$, if $\xi \in d^\prime$, $f_\alpha^q(\xi)=f(\xi)$, otherwise $f_\alpha^q(\xi)=0$.

\item \label{in4.3} $A_\alpha^q \in E_\alpha(\dom(f_\alpha^q))$, where $\psi \in A_\alpha^q$ iff
\begin{enumerate}
    \item  \label{in4.3a}$\tau=\psi \restriction d^\prime \in A^*$,
    \item  \label{in4.3b}$\bigcup\limits_{\beta<\alpha}(x_\beta \cup \{\xi_{\beta,i}^r: i<\gamma_\beta\}) \subseteq \dom(\psi)$,
    \item  \label{in4.3c}for each $\beta<\alpha$ and  each $\xi \in x_\beta$, $\psi(\xi)=\xi$,
    \item  \label{in4.3d}for each $\beta$ and  $i$, $\psi(\xi_{\beta,i}^r)=\xi_{\beta,i}^\tau$, where
      $\tau$ is defined as in \ref{in4.3a}.
\end{enumerate}
\item \label{in4.4} For $l=0,1$, set $(H_\alpha^l)^q(\psi)=h^l(\tau)$.

\item \label{in4.5} $(H_\alpha^2)^q=(H_\alpha^2)^p \restriction A_\alpha^q(\kappa_\alpha)$.
\end{enumerate}
For $\beta>\alpha$, extend $p_\beta$ to $q_\beta= \langle f^q_\beta,A^q_\beta,(H^0)^q_\beta,(H^1)^q_\beta,(H^2)^q_\beta\rangle$ in the obvious way, meaning:
\item \label{A4}
\begin{enumerate}
    \item \label{in5.1} Extend the $f_\beta^p$ to $f_\beta^q$ with domain $d_\beta^q:=\dom(f_\alpha^q) \cup d^\prime$, $f_\beta^q(\gamma)=f_\beta^p(\gamma)$ if $\gamma \in d_\beta^p$, otherwise $f_\beta^q(\gamma)=0$.
    \item \label{in5.2}The measure one set in $q_\beta$ projects down to the measure one subsets appearing in $p_\beta$, i.e. $A^{q_\beta} \restriction d_\beta^p \subseteq A_\beta^p$. Also, $A^q_\beta \subseteq A_\beta(d_\beta^q)$.
    \item \label{in5.3} Restrict the collapses based on the measure one set we just defined, i.e. for $l=0,1$, $(H^l)_\beta^q(\tau)=(H^0)_\beta^p(\tau \restriction d_\beta^p)$, and $(H^2)_\beta^q=(H^2)_\beta^p \restriction A_\beta^q (\kappa_\beta)$.
\end{enumerate}

Let $q=\langle q_\beta : \beta< \eta\rangle$.
We claim $q$ satisfies the conclusion of Lemma \ref{integrate}.  It is easy to see that $q \setminus \alpha$ is $\leq^*$-below $p \setminus \alpha$. For $\tau \in A^*$, $t^*(\tau) \leq^* t(\tau) \leq^*(p+\tau \restriction d) \restriction \alpha $, and so $q\restriction \alpha \leq^* (j_{E_\alpha}(p)+\mc_\alpha(d))\restriction \alpha$. By Lemma \ref{compute},
the last term belongs to
$j_{E_\alpha}(\mathbb{P}_{\langle E_\beta: \beta<\alpha\rangle}) \restriction \lambda= \mathbb{P}_{\langle E_\alpha : \beta <\alpha \rangle}$,
and is equal to $p \restriction \alpha$. Hence, $q \leq^* p$.

Now we check that $q$ satisfies all the properties listed in Lemma \ref{integrate}.
Fix $\psi \in A_\alpha^q$, $\tau=\psi \restriction d^\prime$, $\mu=\psi \restriction d$, and $\nu=\mu(\kappa_\alpha)=\tau(\kappa_\alpha)=\psi(\kappa_\alpha)$. 

Requirement \ref{in1}: Fix $\beta<\alpha$. The Cohen part of $(q+\tau)_\beta$ is $f^q_\beta \circ \psi^{-1}=f^r_\beta \circ \psi^{-1}$. By  \ref{in4.3b} and \ref{in4.3c}, and \ref{in4.3d},
$\dom(f^r_\beta \circ \psi^{-1})=\psi[d^r_\beta]=\psi[x_\beta \cup \{\xi_{\beta,i}^r:i<\gamma_\beta\}]=x_\beta \cup \{\xi_{\beta,i}^\tau:i<\gamma_\beta\}$.
The last term is equal to $\dom(f_\beta^\tau)$ by \ref{in3.1} and \ref{in3.2}. From  \ref{in4.3d} and \ref{in3.3}, if $\xi \in x_\beta$, $f_\beta^r \circ \psi^{-1}(\xi)=f^r_\beta(\xi)=f_\beta^\tau(\xi)$. From \ref{in3.4}, for $i<\gamma_\beta$, $f_\beta^r \circ\ \psi^{-1}(\xi_{\beta,i}^\tau)=f_\beta^r(\xi_{\beta,i}^r)=f_\beta^\tau(\xi_{\beta,i}^\mu)$. The proof for requirement \ref{in1} is done.

Requirement \ref{in2}: Fix $\beta<\alpha$. From \ref{in3.5}, we have $A_\beta^q \circ \psi^{-1} \subseteq A_\beta^r \circ \psi^{-1}=A_\beta^r \circ \tau^{-1}=A_\beta^\tau$. By \ref{in4.3a}, $\tau \in A^*$. For $\sigma \in A_\beta^q \circ \psi^{-1}$, $\sigma \in (A_\beta^\tau)^*$, so $l=0,1$, $(H_\beta^l)^{q+\psi}(\sigma)=(H_\beta^l)^q(\sigma \circ \psi)=(H_\beta^l)^r(\sigma \circ \tau)=(H_\beta^l)^\tau(\sigma)$. Similarly, $(H_\beta^l)^{q+\psi}(\sigma(\kappa_\beta))=(H_\beta^2)^\tau(\sigma(\kappa_\beta))$.

Requirement \ref{in3}: From \ref{in4.4}, for $l=0,1$,
$(H_\alpha^l)^q(\psi)=h^l(\tau)$.

Requirement \ref{in4}: Straightforward from \ref{in4.5}.

Requirement \ref{in5}: Follows from \ref{in4.2}.

This completes the proof.

\end{enumerate}
\end{proof}

\section{prikry property}\label{prikry}

\begin{thm}
  $(\mathbb{P},\leq,\leq^*)$ has the Prikry property, that is to say for any boolean value $b$ and any
    condition $p \in \mathbb{P}$, there is a condition $p^\prime \leq^* p$ such that $p^\prime$ decides $b$.
\end{thm}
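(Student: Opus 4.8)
The plan is to prove the Prikry property by induction on the length $\eta$ of the extender sequence, after first reducing to \emph{pure} conditions. Given $p$ with $\supp(p)=s\neq\emptyset$, the factorization $\mathbb{P}/p\cong(\mathbb{P}/p)\restriction\alpha\times(\mathbb{P}/p)\setminus\alpha$ at each $\alpha\in s$ splits $\mathbb{P}/p$ into a finite product of pure blocks, each a copy of a forcing $\mathbb{P}_{\langle E_\beta\restriction\lambda_\alpha:\beta<\alpha\rangle}$ of strictly shorter length, interleaved with the impure coordinates. On an impure coordinate there are no objects to add, so $\leq$ and $\leq^*$ coincide and deciding is unobstructed; thus it suffices to decide $\phi$ for a pure $p$. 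For the base case $\eta=1$ there is a single coordinate and no lower or upper block, so I would run the one-coordinate argument below directly: integrate at coordinate $0$ and then homogenize via the partition Lemma \ref{rowbottom}.

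For the inductive step, the core engine is a per-coordinate operation. Fixing $\alpha<\eta$ and an object $\tau\in A_\alpha^p$, the squished lower part $(p\restriction\alpha)_\tau$ is a pure condition in the length-$\alpha$ forcing $\mathbb{P}_{\langle E_\beta\restriction\lambda_\alpha(\tau):\beta<\alpha\rangle}$, to which the induction hypothesis applies. I would use it to produce, for each $\tau$, a direct extension $t(\tau)\leq^*(p\restriction\alpha)_\tau$ together with collapse reductions $h^0(\tau)\leq(H_\alpha^0)^p(\tau)$ and $h^1(\tau)\leq(H_\alpha^1)^p(\tau)$ that settle the relevant local form of $\phi$. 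The Integration Lemma \ref{integrate} then folds the family $\langle t(\tau),h^0(\tau),h^1(\tau):\tau\rangle$ into a single direct extension $q\leq^* p$ for which, by clauses \ref{in1}--\ref{in5}, adding any $\psi\in A_\alpha^q$ faithfully reproduces $t(\psi\restriction d')$ below $\alpha$ and the prescribed collapse values at $\alpha$. Assigning to each $\tau$ the colour recording the way $t(\tau)$ decides, and applying Lemma \ref{rowbottom} over $E_\alpha(d_\alpha^q)$, I shrink $A_\alpha^q$ to a measure-one set on which the decision is constant. The sparseness from Lemma \ref{sparseuf} keeps the relevant object sets and colourings set-sized, so both the partition lemma and the closure below apply.

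I would then build a $\leq^*$-decreasing fusion $\langle p^{(\alpha)}:\alpha\leq\eta\rangle$, performing the above operation at successive coordinates and taking $\leq^*$-lower bounds at limits; this is legitimate because $(\mathbb{P}/p,\leq^*)$ is $\overline{\kappa}_0^+$-closed and $\overline{\kappa}_0\geq\eta$, so $\overline{\kappa}_0^+>\eta$ and the fusion has lower bounds. Setting $p^*=p^{(\eta)}$, any $r\leq p^*$ satisfies $r\leq^* p^*+\vec\mu$ for some finite addable tuple $\vec\mu$. A secondary induction on $\vec\mu$, organized by the maximal coordinate $\alpha$ appearing in $\vec\mu$, then shows that $p^*+\vec\mu$ decides $\phi$ in a direction independent of $\vec\mu$: the homogenization secured at stage $\alpha$ forces adding the top object to land below a condition deciding $\phi$ uniformly, while the shorter part of the tuple is absorbed by the sub-induction. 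Consequently $p^*$ itself decides $\phi$, which is what is required.

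The main obstacle is the coordination across coordinates: I must ensure that the homogenization obtained at a coordinate $\alpha$ is not destroyed when later stages act on higher coordinates, and that the decision survives the addition of \emph{arbitrary} finite tuples of objects. This is controlled by two points that need careful verification. First, the decision at stage $\alpha$ depends on the tail only through its lying $\leq^*$-below a fixed condition, and every later stage refines the tail monotonically, so the decision persists; this is also what lets me handle the case of a limit $\eta$, where the tail above a given coordinate never shortens. Second, the faithful reproduction guaranteed by Lemma \ref{integrate} (via Lemma \ref{compute}) means that passing from $q$ to $q+\psi$ recovers a genuinely deciding condition rather than a mere candidate, so the secondary induction on $\vec\mu$ closes. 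Getting the exact inductive statement strong enough to run this secondary induction, while remaining provable at each coordinate, is where the real work lies.
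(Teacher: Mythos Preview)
Your overall architecture matches the paper's: induct on $\eta$, reduce to pure conditions, at each coordinate squish and apply the hypothesis to the lower block, integrate via Lemma~\ref{integrate}, homogenize, and fuse. But there is a genuine gap in the inductive hypothesis you carry.

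You propose to apply the induction hypothesis to $(p\restriction\alpha)_\tau$ and obtain not only a deciding $t(\tau)\leq^*(p\restriction\alpha)_\tau$ but also collapse reductions $h^0(\tau),h^1(\tau)$. The Prikry property for the length-$\alpha$ forcing alone does not give you this: the statement you need to decide genuinely depends on $h^0,h^1$, and those live in $\Col(\overline{\kappa}_\alpha^+,<\nu)\times\Col(\nu,\rho_\alpha(\nu)^+)$, which is only $\overline{\kappa}_\alpha^+$-closed, not closed enough to diagonalize inside the lower block. The paper handles this by strengthening the inductive statement to: for every $\overline{\kappa}_\eta^+$-closed poset $\mathbb{A}$, the product $(\mathbb{P}\times\mathbb{A},\leq,\leq^*)$ has the Prikry property. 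At stage $\alpha$ one then takes $\mathbb{A}$ to be the product of the two lower collapses at $\alpha$, which is $\overline{\kappa}_\alpha^+$-closed, and the strengthened hypothesis yields $t(\tau),h^0(\tau),h^1(\tau)$ simultaneously. Your final paragraph even flags ``getting the exact inductive statement strong enough'' as the crux; this auxiliary-$\mathbb{A}$ strengthening is precisely the missing piece.

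There is a second omission. You assert that the stage-$\alpha$ decision ``depends on the tail only through its lying $\leq^*$-below a fixed condition'', but you give no mechanism for this; in particular you never address the third collapse $H_\alpha^2$, the Cohen part $f_\alpha$, or the upper tail. The paper secures this via an elementary-submodel genericity argument: one builds $(f_\alpha',\vec r_\alpha)$ generic over a suitable $N_\alpha$ for the $\kappa_\alpha^+$-closed poset $\mathbb{Q}_\alpha$ (Cohen part times upper tail under $\leq^*$), and for each $\nu$ shows that a dense set $D_\nu\in N_\alpha$ is met, producing a witness $(H_\alpha^2)'(\nu)$ and the key property~$(\star)$. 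This $(\star)$ is exactly what makes the decision independent of further $\leq^*$-extension of the Cohen part, the third collapse, and the tail, and without it your secondary induction on $\vec\mu$ does not close.
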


Our proof of the Prikry property follows the same lines as the proof in \cite{Gitikblow},
  but the collapse parts introduce additional challenges, which we briefly explain.  For each component $p_\alpha$,
  the collapse parts involve three collapsing forcings, where the chain condition of the first two collapse forcings
  is very close to the closure of the last collapse forcing. For $\alpha>0$, the collapse parts at $\alpha$  are only $\overline{\kappa}_\alpha^+$-closed,
  while the Cohen part is $\kappa_\alpha^+$-closed. This all makes it natural to group together components of a forcing condition
  which live on different levels, which explains why our inductive hypothesis concerns a product $\mathbb{P} \times \mathbb{A}$:
  the intuition is that the factor $\mathbb A$ anticipates some collapsing at the top level when we add another level to $\mathbb{P}$.

\begin{proof}
We consider 3 cases: 
\begin{enumerate}
    \item $\eta=1$.
    \item $\eta>1$ is a successor ordinal.
    \item $\eta$ is limit.
\end{enumerate}Throughout these 3 cases, we assume for simplicity that the condition $p$ is pure.
We prove a stronger statement by induction on $\eta$: Suppose $\mathbb{P}$ has length $\eta$, and $(\mathbb{A}, \leq )$ is a $\overline{\kappa}_\eta^+$-closed forcing poset. Define $(p,a) \leq^*(p^\prime,a^\prime)$ in $\mathbb{P} \times \mathbb{A}$ iff $p \leq^*p^\prime$ and $a \leq a^\prime$. Then $(\mathbb{P}\times \mathbb{A},\leq,\leq^*)$ also has
the Prikry property.

\smallskip

\noindent \underline{\textbf{CASE 1}: $\mathbb{\eta}=1$}

\smallskip  

We drop the subscript $0$ for simplicity, that is  $\kappa=\kappa_0$, $E=E_0$, $ \mc=\mc_0$, $s=s_0$ and so on.
Note here that $\mathbb{A}$ is $\kappa^+$-closed. Let $b$ be a boolean value. Let $p= \langle f,A,H^0,H^1,H^2 \rangle \in \mathbb{P}$, and $a\in \mathbb{A}$. Let $\theta$ be a sufficiently large regular cardinal, $N \prec H_\theta$ such that ${}^{< \kappa}N \subseteq N$, $|N|=\kappa$ and $p,a,\mathbb{P},\mathbb{A},b \in N$. Enumerate the dense open subsets of $(\mathcal{A}(\kappa,\lambda) \times \mathbb{A}) \cap N$ as $\{D_i : i<\kappa\}$. Build a decreasing sequence $\{(f^\prime_i,a_i) : i<\kappa\}$ in
$(\mathcal{A}(\kappa,\lambda) \times \mathbb{A}) \cap N$, where $f^\prime_0=f$, $a_0=a$  and $(f^\prime_{i+1},a_{i+1}) \in D_i$ for all $i$. Let $f^\prime=\bigcup\limits_{i<\kappa}f^\prime_i$, and $a^\prime$ be a lower bound of $\{a_i:i<\kappa\}$. Set $d^\prime=\dom(f^\prime)$. By a simple density argument, we have $d^\prime = N\cap \lambda$. We see that $[d^\prime]^{<\kappa}\subseteq N$. Let $A^\prime \in E(d^\prime)$ be such that $A^\prime \subseteq A(d^\prime)$ and $A^\prime$ projects down to a subset of $A$. Recall the property of $A(d^\prime): \{\mu \in A(d^\prime) : \mu(\kappa)=\nu\}$ has size at most $s(\nu)^{++}$.
For each $\mu \in A^\prime$, $\dom(\mu) \subseteq d^\prime \subseteq N$ and $|\dom(\mu)| \le \mu(\kappa)<\kappa$, hence, $\mu \in N$.

Let $\nu \in A^\prime(\kappa)$. By Lemma \ref{sparseuf}, let $\{ \mu_j,h^0_j,h^1_j\}_{j < \lambda(\nu)}$ be an enumeration in
$N$ of the triples $\mu,h^0,h^1$ where $\mu \in A^\prime$, $\mu(\kappa)=\nu$, $h^0 \in \Col(\omega_1,<\nu)$, and $h^1 \in \Col(\nu,\rho(\nu)^+)$, respectively.

Define $D_\nu$ as the collection of $(g,x) \in \mathcal{A}(\kappa,\lambda) \times \mathbb{A}$ such that there is an $h \in \Col(\lambda(\nu)^+,<\kappa)$ with $h \leq H^2(\nu)$ meeting the following requirements:

\begin{enumerate}
    \item For all $j<\lambda(\nu)$, $\dom(\mu_j) \subset \dom(g)$.
    \item For all $j<\lambda(\nu)$,
    \begin{itemize}
        \item EITHER $(\langle  g\oplus \mu_j, \lambda(\nu),h^0_j,h^1_j,h \rangle,x)$ decides $b$,
        \item OR there are no $g^\prime \leq g$, $h^\prime \leq h,$ and $x^\prime \leq x$ such that 
        
         $(\langle g^\prime \oplus \mu_j,\lambda(\nu),h^0_j,h^1_j,h^\prime \rangle,x^\prime)$ decides $b$.
    \end{itemize}
\end{enumerate}

\begin{claim}
$D_\nu$ is a dense open subset of $\mathcal{A}(\kappa,\lambda)\times \mathbb{A}$ and $D_\nu \in N$.
\end{claim}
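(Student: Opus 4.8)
The plan is to verify the three assertions in the order: membership $D_\nu \in N$, openness, and then density, the last being the substantial one. For $D_\nu \in N$, I would observe that every parameter in the definition lies in $N$: since $\vert N\vert = \kappa$ and ${}^{<\kappa}N \subseteq N$ we have $\kappa \subseteq N$, so $\nu \in N$; the enumeration $\langle \mu_j, h^0_j, h^1_j : j<\lambda(\nu)\rangle$ was chosen in $N$; and $H^2(\nu), b, \mathbb{P}, \mathbb{A}, \mathcal{A}(\kappa,\lambda)$ all belong to $N$. As $D_\nu$ is defined by a formula over $H_\theta$ with these parameters and $N \prec H_\theta$, elementarity gives $D_\nu \in N$. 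For openness, suppose $(g,x) \in D_\nu$ with witness $h$ and let $(g^-, x^-) \leq (g,x)$; I claim the same $h$ witnesses $(g^-,x^-)\in D_\nu$. Property (1) persists because $\dom(g)\subseteq\dom(g^-)$. For (2), fix $j$: since $\dom(\mu_j)\subseteq\dom(g)\subseteq\dom(g^-)$ we get $g^-\oplus\mu_j \leq g\oplus\mu_j$, hence $(\langle g^-\oplus\mu_j,\lambda(\nu),h^0_j,h^1_j,h\rangle,x^-)$ extends $(\langle g\oplus\mu_j,\lambda(\nu),h^0_j,h^1_j,h\rangle,x)$. Thus if the EITHER branch held for $(g,x)$ at $j$ the decision is preserved downward, and if the OR branch held then every extension of the smaller condition is already an extension of the $(g,x,h)$-condition and so fails to decide. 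Hence $D_\nu$ is open.

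For density, fix $(g_0,x_0)$. I would first shrink $g_0$ so that $\bigcup_{j<\lambda(\nu)}\dom(\mu_j)\subseteq\dom(g_0)$; this union has size $<\kappa$, so this is legitimate, and it secures property (1) for everything below. I then build a $\leq$-decreasing sequence $\langle (g_j,x_j,h_j) : j \leq \lambda(\nu)\rangle$ with each $h_j \leq H^2(\nu)$, handling one object $\mu_j$ per successor stage. At stage $j$, ask whether there exist $g^* \leq g_j$, $h^* \leq h_j$, $x^* \leq x_j$ with $(\langle g^*\oplus\mu_j,\lambda(\nu),h^0_j,h^1_j,h^*\rangle,x^*)$ deciding $b$. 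If so, pass to such a triple; this forces the EITHER branch at $j$ for every later condition, since deciding is preserved downward. If not, keep the triple unchanged; this forces the OR branch at $j$ for everything below, exactly as in the openness argument. At limit stages I take lower bounds. Setting $(g,x):=(g_{\lambda(\nu)},x_{\lambda(\nu)})$ and $h:=h_{\lambda(\nu)}$ then yields a condition in $D_\nu$ below $(g_0,x_0)$.

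The main obstacle is the closure bookkeeping that makes this $\lambda(\nu)$-stage construction legitimate, and it is calibrated precisely by the design of the forcing. The construction requires lower bounds of decreasing sequences of length up to $\lambda(\nu)$ in each coordinate. In $\mathcal{A}(\kappa,\lambda)\times\mathbb{A}$ this is harmless, since that poset is $\kappa^+$-closed and $\lambda(\nu) < \kappa$; but in the third collapse coordinate $\Col(\lambda(\nu)^+,<\kappa)$ one uses exactly its $\lambda(\nu)^+$-closure to bound a sequence of length $\lambda(\nu)$, and this is tight. It is for this reason that the number of triples $(\mu,h^0,h^1)$ to diagonalize against was arranged, via Lemma \ref{sparseuf} together with a GCH cardinality count of the collapse forcings, to equal exactly $\lambda(\nu)=s(\nu)^{++}=\rho(\nu)^{++}$: the closure of the top collapse at $\nu$ is matched to the count of objects lying over $\nu$, and verifying this matching is the real content of the claim.
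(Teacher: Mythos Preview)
Your proof is correct and follows essentially the same approach as the paper's: the paper likewise dispatches $D_\nu \in N$ by the definability-from-parameters argument, declares openness easy, and proves density by the same $\lambda(\nu)$-stage recursion in $\mathcal{A}(\kappa,\lambda)\times\Col(\lambda(\nu)^+,<\kappa)\times\mathbb{A}$, using exactly the closure match you identify. Your treatment of openness and of the closure bookkeeping is in fact more detailed than the paper's.
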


\begin{claimproof}
  Since $D_\nu$ is defined using parameters in $N$, $D_\nu \in N$. It is easy to check that
  $D_\nu$ is open. Now we check the density for $D_\nu$. Let $g \in \mathcal{A}(\kappa,\lambda)$ and $x \in \mathbb{A}$. Because $\mathcal{A}(\kappa,\lambda)$ is $\kappa^+$-closed, we may assume $g$ meets the first requirement. Build sequences $\vec{g}=\{g_j\}_{j\leq \lambda(\nu)}$, $\vec{h}=\{h_j\}_{j \leq \lambda(\nu)}$, and $\vec{x}=\{x_j\}_{j \leq \lambda(\nu)}$ such that

\begin{itemize}
    \item $\vec{g}$ is a decreasing sequence in $\mathcal{A}(\kappa,\lambda)$.
    \item $\vec{h}$ is a decreasing sequence in $\Col(\lambda(\nu)^+,<\kappa)$.
\item $\vec{x}$ is a decreasing sequence in $\mathbb{A}$.
       \item $g_0=g$, $h_0=H^2(\nu)$ and $x_0=x$.
        \item At each limit $j \leq \lambda(\nu)$, we take $g_j=\bigcup\limits_{j^\prime<j}g_{j^\prime}$, and $h_j= \bigcup\limits_{j^\prime<j}h_{j^\prime}$.
\item At each limit $j\leq \lambda(\nu)$, take $x_j$ as a lower bound of $\{x_{j^\prime}:j^\prime <j\}$.
\end{itemize}

Note that the construction proceeds to the end since
$\mathcal{A}(\kappa,\lambda)$ and $\mathbb{A}$ are $\kappa^+$-closed, 
  $\Col(\lambda(\nu)^+,<\kappa)$ is $\lambda(\nu)^+$-closed, and $\lambda(\nu)<\kappa$.
Now suppose $f_j$, $h_j$, and $x_j$ are constructed and $j<\lambda(\nu)$.
Ask if there is a triple $g^\prime$,$h^\prime$,$x^\prime$ below $g_j$,$h_j$, and $x_j$, respectively, such that ($\langle g^\prime \oplus \mu_j,\lambda(\nu),h^0_j,h^1_j,h^\prime \rangle,x^\prime)$ decides $b$. If the answer is no, take $g_{j+1}=g_j$,$h_{j+1}=h_j$, and $x_{j+1}=x_j$.  Otherwise, there are such $g^\prime$,$h^\prime$, and $x^\prime$. Take $g_{j+1}=g^\prime$, $h_{j+1}=h^\prime$, and $x_{j+1}=x^\prime$. From the construction, we see $(g_{\lambda(\nu)},x_{\lambda(\nu)})\leq (g,x)$ is in $D_\nu$, as witnessed by $h_{\lambda(\nu)}$.
\end{claimproof}

By the construction of $f^\prime$ and $a^\prime$ we have $(f^\prime,a^\prime) \in D_\nu$ with a witness $h \in \Col(\rho(\nu)^{+3}, <\kappa)$.
Define $(H^2)^\prime(\nu)=h$.

\smallskip
We record the properties of $f^\prime$, $a^\prime$, and $(H^2)^\prime$ here:

\noindent $(\star)$ For each $\mu \in A^\prime$ with $\nu=\mu(\kappa)$, we have that for all $h^0 \in \Col(\omega_1,<\nu)$ and $h^1 \in \Col(\nu,\rho(\nu)^+)$,

\begin{itemize} 
\item EITHER $(\langle f^\prime \oplus \mu,\lambda(\nu),h^0,h^1,(H^2)^\prime(\nu)\rangle,a^\prime)$ decides $b$,
\item OR there are no $g\leq f^\prime$, $h\leq (H^2)^\prime(\nu)$, and $x\leq a^\prime$ such that
$(\langle g\oplus \mu,\lambda(\nu),h^0,h^1,h \rangle,x) $ decides $b$.
\end{itemize}

\medskip

  Now for each $\mu \in A^\prime$, find $g(\mu)\leq f^\prime$, $h^0(\mu) \leq H^0(\mu \restriction \dom(f))$, $h^1(\mu) \leq H^1(\mu \restriction \dom(f))$,
  $h^2(\mu)  \leq (H^2)^\prime(\mu(\kappa))$, and $x(\mu) \leq a^\prime$ such that
  
  \begin{center}
  $(\langle g(\mu) \oplus \mu, \lambda(\mu), h^0(\mu), h^1(\mu), h^2(\mu) \rangle, x(\mu))$ decides $b$.
\end{center}

By property $(\star)$, we have

\begin{center}
    $(\langle f^\prime \oplus \mu, \lambda(\nu), h^0(\mu), h^1(\mu), (H^2)^\prime(\mu(\kappa))\rangle,a^\prime)$ decides $b$. 
\end{center}
Set 
\begin{align*}
    B_0&=\{\mu \in A^\prime : (\langle f^\prime \oplus \mu,\lambda(\mu),h^0(\mu),h^1(\mu),(H^2)^\prime(\mu(\kappa))\rangle ,a^\prime)\Vdash b \}. \\
    B_1 &=\{\mu \in A^\prime : (\langle f^\prime \oplus \mu,\lambda(\mu),h^0(\mu),h^1(\mu),(H^2)^\prime(\mu(\kappa))\rangle ,a^\prime)\Vdash \neg b \}.
\end{align*}
We see that $A^\prime=B_0 \sqcup B_1$. Choose $i_0 \in \{0,1\}$ such that $B_{i_0} \in E(d^\prime)$. Define $p^{\prime\prime}= \langle f^{\prime\prime},A^{\prime\prime},(H^0)^{\prime\prime},(H^1)^{\prime\prime},(H^2)^{\prime\prime}\rangle$ (note that $f^\prime, A^\prime$, and $(H^2)^\prime$ are already defined) as follows:

\begin{itemize}
\item $f^{\prime\prime}=f^\prime$.
\item $A^{\prime\prime}=B_{i_0}$.
\item For $l=0,1$, $\dom((H^l)^{\prime\prime})=A^{\prime\prime}$ and $H^l(\mu)=h^l(\mu)$.
\item $(H^2)^{\prime\prime}=(H^2)^\prime \restriction A^{\prime\prime} (\kappa)$. 
\end{itemize}

\begin{claim}
$(p^{\prime\prime},a^\prime) \leq^*(p,a)$ and
$(p^{\prime\prime},a^\prime)$ decides $b$. 
\end{claim}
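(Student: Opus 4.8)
The plan is to prove the two assertions in turn, treating the case $i_0 = 0$ (so $A'' = B_0$); the case $i_0 = 1$ is symmetric with $b$ and $\neg b$ interchanged. For $(p'', a') \leq^* (p, a)$, since $p$ and $p''$ are both pure I would just check the clauses of the definition of $\leq^*$ coordinatewise. Concretely: $a' \leq a$ because $a_0 = a$ and $a'$ is a lower bound of the $a_i$; $f'' = f' \leq f'_0 = f$; $A'' = B_0 \subseteq A'$ and $A'$ projects down into $A$; for $l < 2$ we have $(H^l)''(\mu) = h^l(\mu) \leq H^l(\mu \restriction \dom(f)) = (H^l)^p(\pi_{d'', d}(\mu))$ by the choice of $h^l(\mu)$; and $(H^2)''(\nu) = (H^2)'(\nu) \leq H^2(\nu)$ since the witness defining $(H^2)'(\nu)$ was taken below $H^2(\nu)$ inside $D_\nu$. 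Each of these is just unwinding a definition.

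For the decision I would show $(p'', a') \Vdash b$ by contradiction. The structural fact I rely on, special to $\eta = 1$, is that any $q \leq p''$ satisfies either $q \leq^* p''$ or $q \leq^* p'' + \mu$ for a single addable $\mu \in A''$, because once the support reaches $\{0\}$ no further object can be added. So suppose $(q, y) \leq (p'', a')$ forces $\neg b$. In the first alternative, $\mu \in A'' = B_0$ gives $(p'' + \mu, a') \Vdash b$, and then $(q, y) \leq (p'' + \mu, a')$ would force $b$, a contradiction. In the second alternative $q$ is pure; I pick any $\mu \in A^q$ and set $\bar\mu = \mu \restriction d''$, which lies in $A''$ since $A^q$ projects into $A''$. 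Then $(q + \mu, y) \leq (q, y)$ forces $\neg b$, while $p'' + \bar\mu$ is precisely the condition witnessing $\bar\mu \in B_0$ and hence forces $b$; so it suffices to show $(q + \mu, y) \leq (p'' + \bar\mu, a')$, which would force $b$ and again contradict.

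I expect this last inclusion to be the main obstacle, and it is where the bookkeeping of the overwrite operation $\oplus$ enters. The one-step extension gives Cohen part $f^q \oplus \mu$, and I must recognize it as an extension of $f' \oplus \bar\mu$: setting $g = f^q \oplus (\mu \restriction (d^q \setminus d''))$ one checks $g \oplus \bar\mu = f^q \oplus \mu$ and, using $f^q \leq f'$, that $g \leq f'$, whence $f^q \oplus \mu \leq f' \oplus \bar\mu$. The collapse refinements then follow from $q \leq^* p''$: the first two coordinates give $(H^l)^q(\mu) \leq (H^l)''(\pi_{d^q, d''}(\mu)) = h^l(\bar\mu)$, and the third gives $(H^2)^q(\nu) \leq (H^2)''(\nu) = (H^2)'(\nu)$ with $\nu = \mu(\kappa)$; together with $y \leq a'$ these are exactly the clauses placing $(q + \mu, y)$ below $(p'' + \bar\mu, a')$. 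Note that the dichotomy $A' = B_0 \sqcup B_1$ and property $(\star)$ have already done their work upstream, so what remains here is purely this matching of Cohen overwrites and coordinatewise collapse orders.
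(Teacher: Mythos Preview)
Your argument is correct and follows essentially the same route as the paper. The only organizational difference is that you assume $i_0=0$ at the outset and observe directly that $p''+\mu$ coincides with the condition $\langle f'\oplus\mu,\lambda(\mu),h^0(\mu),h^1(\mu),(H^2)'(\mu(\kappa))\rangle$ appearing in the definition of $B_0$, so membership in $B_0$ immediately gives $(p''+\mu,a')\Vdash b$; the paper instead starts from an arbitrary deciding extension, invokes $(\star)$ to pull back to a condition with Cohen part $f'\oplus\mu$ and collapse parts $(h^l)^q$, and from that deduces $i_0=0$. Your shortcut is legitimate here and slightly more direct, though the paper's explicit use of $(\star)$ is the template that recurs in the higher cases.
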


\begin{claimproof}
  It is easy to see that $(p^{\prime\prime},a^\prime) \leq^*(p,a)$. Let $q \leq p^{\prime\prime}, x\leq a^\prime $ such that $(q,x)$ decides $b$.
    Without loss of generality, $(q,x) \Vdash b$, and $q$ is not pure. Hence, $q \leq^* p^{\prime\prime}+\mu$ for some $\mu \in A^{\prime\prime}$.  Observe that $f^q \leq g \oplus \mu$ for some $g \leq f^\prime$, and $(h^2)^q \leq (H^2)^{\prime\prime}(\mu(\kappa))=(H^2)^\prime(\mu(\kappa))$.  From the properties $(\star)$, we have 
\begin{center}$(\langle f^\prime \oplus \mu,\lambda(\mu),(h^0)^q,(h^1)^q,(H^2)^\prime(\mu(\kappa))\rangle,a^\prime) \Vdash b$.
\end{center}
Since for $l=0,1$, $(h^l)^q \leq (H^l)^{\prime\prime}(\mu)=h^l(\mu)$, $\mu$ can't be in $B_1$.
Hence, $i_0=0$, and $A^{\prime\prime} = B_0$.

By a similar argument, every extension of $(p^{\prime\prime},a^\prime)$ by $\mu^\prime\in A^{\prime\prime}$  forces $b$. Since every extension of $p^{\prime\prime}$ has a further extension which is not pure, and that forces $b$, by a density argument, $(p^{\prime\prime},a^\prime) \Vdash b$.
\end{claimproof}

\smallskip

\noindent \underline{\textbf{CASE 2}: $\mathbb{\eta}>1$ is a successor ordinal} 

\smallskip

The proofs for all successor ordinals $\eta>1$ are essentially the same.
For simplicity, assume $\eta=2$. Hence, $\mathbb{A}$ is $\kappa_1^+$-closed.
Suppose for simplicity that $p =  \langle p_0,p_1 \rangle$ is pure.
Write $p_0=\langle f_0,A_0,H_0^0,H_0^1,H_0^2 \rangle$, and $p_1= \langle f_1,A_1,H_1^0,H_1^1,H_2^1 \rangle$.  Also, let $a\in \mathbb{A}$.

Let $\theta$ be a sufficiently large regular cardinal. Let $N_1 \prec H_\theta$, $|N_1|=\kappa_1, {}^{<\kappa_1}N_1 \subseteq N_1,$ and $p,a,\mathbb{P},b \in N_1$. Enumerate the dense open subsets of $\mathcal{A}(\kappa_1,\lambda) \times \mathbb{A} \cap N_1$ as $\{D_i : i<\kappa\}$. Build a decreasing sequence $\{(f_{1,i}^\prime,a_i) : i<\kappa_1\}$
in $(\mathcal{A}(\kappa_1,\lambda)\times \mathbb{A}) \cap N_1$
such that $f_{1,0}^\prime=f_1$, $a_0=a$, and $(f_{1,i+1}^\prime,a_{i+1}) \in D_i$ for all $i$. Let $f_1^\prime =\bigcup\limits_{i<\kappa_1} f_{1,i}^\prime$
and $a^\prime=$ a lower bound of $\{a_i:i<\kappa_1\}$.  Set $d_1^\prime=\dom(f_1^\prime)$, which is $N_1 \cap \lambda$. Let $A_1^\prime \in E_1(d_1^\prime)$ be such that $A_1^\prime \subseteq A_1(d_1^\prime)$ and $A_1^\prime$ projects down to a subset of $A_1$. Similar to the one extender case, $A_1^\prime \subseteq N_1$.

Let $ \nu \in A_1^\prime(\kappa_1)$. By Lemma \ref{sparseuf}, let $\{t_j,h^0_j,h^1_j,\mu_j\}_{j<\lambda_1(\nu)}$ be an enumeration in $N_1$ of the quadruples $t,\mu,h^0,h^1$ where
$t \in \mathbb{P}_{h_1^0(\nu)}$, $\mu \in A^\prime$ with $\mu(\kappa_1)=\nu$, $h^0 \in \Col(\kappa_0^+,<\nu)$, and $h^1 \in \Col(\nu, \rho_1(\nu)^+)$, respectively.

Define $D_\nu$ as the collection of $(g,x) \in \mathcal{A}(\kappa_1,\lambda)\times \mathbb{A}$ such that there is an $h\in \Col(\lambda_1(\nu)^+,<\kappa_1)$ with $h \leq H_1^2(\nu)$ meeting the following requirements:

\begin{enumerate}
    \item For all $j<\lambda_1(\nu)$, $\dom(\mu_j) \subset \dom(g)$.
    \item For all $j<\lambda_1(\nu)$,
    \begin{itemize}
        \item EITHER  $(t_j^\frown \langle g \oplus \mu_j,\lambda_1(\nu),h^0_j,h^1_j,h \rangle,x)$ decides $b$,
        \item OR there is no $g^\prime \leq g$, $h^\prime \leq h$ and $x^\prime\leq x$ such that $(t_j^\frown \langle g^\prime \oplus \mu_j,\lambda_1(\nu),h^0_j,h^1_j,h \rangle,x^\prime)$ decides $b$.
    \end{itemize}
\end{enumerate}

    Similar to the one extender case, $D_\nu$ is a dense open subset of $\mathcal{A}(\kappa_1,\lambda) \times \mathbb{A}$ and $D_\nu \in N_1$. We have $(f_1^\prime,a^\prime) \in D_\nu$ with a witness $h \in \Col(\rho_1(\nu)^{+3},<\kappa_1)$. Define $(H_1^2)^\prime(\nu)=h$.

\smallskip
    We record some properties of $(H_1^2)^\prime$:

\noindent $(\star)$ For each $\mu \in A_1^\prime$ with $\nu=\mu(\kappa_1)$, we have that for all $h^0 \in \Col(\kappa_0^+,<\nu)$, $h^1 \in \Col(\nu,\rho_1(\nu)^+)$, and $t\in \mathbb{P}_{h_1^0(\nu)}$:

\begin{itemize} 
\item EITHER $(t^\frown \langle f^\prime_1 \oplus \mu,\lambda_1(\nu),h^0,h^1,(H^2_1)^\prime(\nu)\rangle,a^\prime)$ decides $b$,
\item OR there is no
  $g\leq f^\prime_1$, $h\leq (H^2_1)^\prime(\nu)$, and $x\leq a^\prime$ such that
$(t^\frown\langle g\oplus \mu,\lambda_1(\nu),h^0,h^1,h \rangle,x) $ decides $b$.
\end{itemize}

\smallskip

Fix $\mu \in A_1^\prime$. Let $\dot{G}$ be a canonical name for a generic object for $\mathbb{P}_{h_1^0(\mu(\kappa_1))} \times \Col(\kappa_0^+,<\mu(\kappa_1)) \times \Col(\mu(\kappa_1),\rho_1(\mu)^+)$.  Since the product of the collapses $\Col(\kappa_0^+,<\mu(\kappa_1)) \times \Col(\mu(\kappa_1),\rho(\mu)^+)$ is $\kappa_0^+$-closed, we apply the induction hypothesis for the one extender case to the condition 
        \begin{center}
        $((p_0)_\mu,(H_1^0)(\mu\restriction \dom(f_1)),(H_1^1)(\mu \restriction \dom(f_1)))$.
        \end{center}
        Let $t(\mu)\leq^*(p_0)_\mu$, $h^0(\mu)\leq (H_1^0)(\mu\restriction \dom(f_1))$, and $h^1(\mu)\leq (H_1^1)(\mu\restriction \dom(f_1))$  such that
\begin{center}
$(t(\mu),h^0(\mu),h^1(\mu))$ decides if $\exists (t,(h^*){}^0,(h^*){}^1) \in \dot{G}$, $(t ^\frown \langle f_1^\prime,\lambda_1(\mu),(h^*){}^0,(h^*){}^1,(H^2_1)^\prime(\mu(\kappa_1)) \rangle,a^\prime) \parallel b$.
\end{center}    

By strengthening $t(\mu)$ (under $\leq^*)$, $h^0(\mu)$ and $h^1(\mu)$ further, we can assume that for each $\mu \in A_1^\prime$, $(t(\mu),h^0(\mu),h^1(\mu))$ satisfies exactly one out of the following three mutually exclusive properties:

\begin{enumerate}[label=(P\arabic*)]
    \item \label{P1} $(t(\mu),h^0(\mu),h^1(\mu)) \Vdash 
    \exists (t,(h^0)^*),(h^1)^*) \in \dot{G}$, \\ $(t ^\frown \langle f_1^\prime \oplus \mu ,\lambda_1(\mu),(h^0)^*,(h^1)^*,(H^2_1)^\prime(\mu(\kappa_1)) \rangle,a^\prime) \Vdash b$.
    
     \item \label{P2} $(t(\mu),h^0(\mu),h^1(\mu))  \Vdash\exists (t,(h^0)^*),(h^1)^*) \in \dot{G}$, \\ $(t ^\frown \langle f_1^\prime \oplus \mu ,\lambda_1(\mu),(h^0)^*,(h^1)^*,(H^2_1)^\prime(\mu(\kappa_1)) \rangle,a^\prime) \Vdash \neg b$.
     
     \item  \label{P3} $(t(\mu),h^0(\mu),h^1(\mu)) \Vdash \nexists (t,(h^0)^*),(h^1)^*) \in \dot{G}$, \\ $(t ^\frown \langle f_1^\prime \oplus \mu ,\lambda_1(\mu),(h^0)^*,(h^1)^*,(H^2_1)^\prime(\mu(\kappa_1)) \rangle,a^\prime) \parallel b$.
\end{enumerate}

Shrink $A_1^\prime$ so that every $\mu \in A_1^\prime$ falls into the same case above. Use Lemma \ref{integrate} to find $q \leq^* p_0 ^\frown \langle f_1^\prime,A_1^\prime,H_1^0,H_1^1,(H_1^2)^\prime \rangle$ such that for each $\tau \in A_1^q$ with $\mu=\tau \restriction d_1^\prime$, we have $f_0^{q+\tau}=f_0^{t(\mu)}$, $(q+\tau) \restriction 1 \leq^* t(\mu)$, $(H_1^0)^q(\tau)=h^0(\mu)$, $(H_1^1)^q(\tau)=h^1(\mu)$, $(H_1^2)^q(\mu(\kappa_1))=(H_1^2)^\prime(\mu(\kappa_1))$, and $f_1^q \leq f_1^\prime$.

\begin{claim}
$(q,a^\prime)$ decides $b$  
\end{claim}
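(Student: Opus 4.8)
The plan is to reproduce the structure of the final claim of Case 1, now accounting for the extra coordinate: a non-pure extension of $(q,a^\prime)$ carries data at coordinates $0$ and $1$, and the fate of $b$ at coordinate $0$ is governed by the generic $\dot{G}$ through the trichotomy (P1)--(P3). Since $A_1^\prime$ was shrunk so that every $\mu\in A_1^\prime$ obeys the same one of these, I will show that (P3) is impossible, that the common case (P1) gives $(q,a^\prime)\Vdash b$, and that (P2) gives $(q,a^\prime)\Vdash\neg b$. For the setup, given a non-pure $(q^\prime,x)\leq(q,a^\prime)$ I may assume $1\in\supp(q^\prime)$, so $q^\prime$ extends $q+\tau$ for the object $\tau\in A_1^q$ added at coordinate $1$; put $\mu=\tau\restriction d_1^\prime$ and $\nu=\mu(\kappa_1)$. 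The properties of $q$ recorded above (from Lemma \ref{integrate}) yield $q^\prime\restriction 1\leq t(\mu)$, together with $f_1^{q^\prime}\leq f_1^\prime\oplus\mu$, $(h_1^0)^{q^\prime}\leq h^0(\mu)$, $(h_1^1)^{q^\prime}\leq h^1(\mu)$ and $(h_1^2)^{q^\prime}\leq(H_1^2)^\prime(\nu)$, so that $(q^\prime\restriction 1,(h_1^0)^{q^\prime},(h_1^1)^{q^\prime})$ is a condition below $(t(\mu),h^0(\mu),h^1(\mu))$ in the product forcing for $\dot{G}$.

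To rule out (P3), I use that the conditions deciding $b$ are dense: fix such a $(q^\prime,x)\leq(q,a^\prime)$, taken non-pure, and let $\mu,\nu$ be as above. Writing $f_1^{q^\prime}=g\oplus\mu$ with $g\leq f_1^\prime$, the condition $(q^\prime,x)$ decides $b$ while going below $f_1^\prime$, below $(H_1^2)^\prime(\nu)$ and below $a^\prime$ in the relevant slots, with coordinate-$0$ part $q^\prime\restriction 1$ and coordinate-$1$ collapses $(h_1^0)^{q^\prime},(h_1^1)^{q^\prime}$. Hence the second alternative of property $(\star)$ fails, so its first alternative holds: the condition $(\,(q^\prime\restriction 1){}^\frown\langle f_1^\prime\oplus\mu,\lambda_1(\nu),(h_1^0)^{q^\prime},(h_1^1)^{q^\prime},(H_1^2)^\prime(\nu)\rangle,a^\prime)$ decides $b$. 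But then $(q^\prime\restriction 1,(h_1^0)^{q^\prime},(h_1^1)^{q^\prime})$, which lies below $(t(\mu),h^0(\mu),h^1(\mu))$, forces itself into $\dot{G}$ and so witnesses the existential denied by (P3), a contradiction. Thus the common case is (P1) or (P2).

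Suppose the common case is (P1); I claim the conditions forcing $b$ are dense below $(q,a^\prime)$, whence no extension forces $\neg b$ and therefore $(q,a^\prime)\Vdash b$. Given $(q^\prime,x)\leq(q,a^\prime)$, reduce to $1\in\supp(q^\prime)$ and take $\mu,\nu$ as above. The predicate $\Phi$, where $\Phi((t,h^0,h^1))$ asserts that $(\,t\,{}^\frown\langle f_1^\prime\oplus\mu,\lambda_1(\nu),h^0,h^1,(H_1^2)^\prime(\nu)\rangle,a^\prime)\Vdash b$, is a ground-model, downward-closed predicate on triples, so (P1) says the triples satisfying it are predense below $(t(\mu),h^0(\mu),h^1(\mu))$. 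Taking a common refinement and using downward closure of $\Phi$, I obtain $(t^*,(h^0)^*,(h^1)^*)\leq(q^\prime\restriction 1,(h_1^0)^{q^\prime},(h_1^1)^{q^\prime})$ with $\Phi$ holding. Gluing $t^*,(h^0)^*,(h^1)^*$ onto the coordinate-$1$ Cohen part $f_1^{q^\prime}$ and collapse $(h_1^2)^{q^\prime}$ of $q^\prime$ yields a single condition $\bar{q}\leq q^\prime$ that also lies below the $b$-forcing condition named by $\Phi$, so $(\bar{q},x)\Vdash b$. Case (P2) is identical with $\neg b$ in place of $b$.

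The main obstacle is the compatibility bookkeeping in the last two paragraphs: the witness produced by (P1) (and the pulled-up condition in the (P3) argument) carries the unstrengthened Cohen part $f_1^\prime\oplus\mu$ and collapse $(H_1^2)^\prime(\nu)$, which are weaker than those of $q^\prime$, so one must check that overwriting them by $f_1^{q^\prime}$ and $(h_1^2)^{q^\prime}$ produces a legitimate condition lying below both, and that property $(\star)$ is being invoked with exactly the free parameters $f_1^\prime$, $(H_1^2)^\prime(\nu)$ and $a^\prime$ for which it was stated. The second point requiring care is the passage from the forced existential over $\dot{G}$ to genuine predensity of a ground-model set of triples, which is what licenses extracting an actual witnessing condition rather than a mere name; here the downward closure of $\Phi$ is what guarantees that the common refinement still satisfies $\Phi$.
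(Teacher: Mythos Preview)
Your proof is correct and follows essentially the same strategy as the paper: both use $(\star)$ to pull a deciding extension up to one with the canonical data $f_1^\prime\oplus\mu$, $(H_1^2)^\prime(\nu)$, $a^\prime$, then use the trichotomy (P1)--(P3) together with the integration properties of $q$ to conclude. The organization differs slightly: the paper fixes one $(r,x)\Vdash b$, uses it to rule out (P2) and (P3) by forcing below $(r_0,(h_1^0)^r,(h_1^1)^r)$ with an actual generic $G$, and then argues that the canonical condition $r^\mu$ already forces $b$; you instead rule out (P3) first and then treat (P1)/(P2) symmetrically via the ground-model predensity interpretation of the forced existential and downward closure of $\Phi$. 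These are equivalent formulations of the same argument, and your bookkeeping of the compatibility when gluing $t^*,(h^0)^*,(h^1)^*$ onto $f_1^{q^\prime},(h_1^2)^{q^\prime}$ is exactly the check the paper performs implicitly when comparing its two displayed conditions.
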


\begin{claimproof}
  Let $(r,x) \leq (q,a^\prime)$ be such that $(r,x) \parallel b$. Without loss of generality $(r,x) \Vdash b$. Assume also $1 \in \supp(r)$. Then $r$ is an extension of a 1-step extension of $q$ by $\mu^\prime \in A_1^q$ for some $\mu^\prime$.

  Recall $d_1^\prime=\dom(f_1^\prime)$. Set $\mu=\mu^\prime \restriction d_1^\prime$. We see that $f_1^r  \restriction d_1^\prime \leq f_1^\prime \oplus \mu,$ $(h_1^2)^r \leq (H_1^2)^\prime(\mu(\kappa_1))$, and $x \leq a^\prime$.
By $(\star) $, we have $(r_0 ^\frown \langle f_1^\prime \oplus \mu, \lambda_1(\mu),(h_1^0)^r,(h_1^1)^r,(H_1^2)^\prime(\mu(\kappa_1))\rangle ,a^\prime) \Vdash b$.  

    Since $r$ is an extension of a one-step extension of $q$ by $\mu^\prime$,
    $r_0 \leq t(\mu)$, $(h_1^0)^r \leq h^0(\mu)$, and $(h_1^1)^r \leq h^1(\mu)$.
    By construction $\mu$ must fall into exactly one of the cases \ref{P1}, \ref{P2}, \ref{P3} listed above:
    we claim that cases \ref{P2} and \ref{P3} are impossible. Suppose for contradiction that we are in case \ref{P2},
    and force below $(r_0, (h_1^0)^r, (h_1^1)^r)$ to obtain a generic object $G$. Then $(r_0, (h_1^0)^r, (h_1^1)^r) \in G$
    and $r_0^\frown \langle f_1^\prime \oplus \mu, \lambda_1(\mu),(h_1^0)^r,(h_1^1)^r,(H_1^2)^\prime(\mu(\kappa_1))\rangle ,a^\prime) \Vdash b$,  
    but $(r_0, (h_1^0)^r, (h_1^1)^r) \leq (t(\mu), h^0(\mu), h^1(\mu))$ and we are in case \ref{P2} so that also
    $G$ contains a triple $(t, (h^0)^*, (h^1)^*)$
    with
    $(t^\frown \langle f_1^\prime \oplus \mu ,\lambda_1(\mu),(h^0)^*,(h^1)^*,(H^2_1)^\prime(\mu(\kappa_1)) \rangle,a^\prime) \Vdash \neg b$:
    this gives comparable conditions forcing $b$ and $\neg b$, an immediate contradiction. Similarly, we are not in case \ref{P3},
    so $(t(\mu),h^0(\mu),h^1(\mu))$ must have the property \ref{P1}.

  We claim already $r^\mu:=(t(\mu) ^\frown \langle f_1^\prime \oplus \mu, \lambda_1(\mu),h^0(\mu),h^1(\mu),(H_1^2)^\prime(\mu(\kappa_1)) \rangle,a^\prime)$ forces $b$. Suppose not, find $(r^\prime,x^\prime) \leq r^\mu$ such that $(r^\prime,x^\prime)  \Vdash \neg b$. Strengthen further, we assume $f_1^{r^\prime} \restriction d_1^\prime \leq f_1^\prime \oplus \mu$, $(h_1^2)^{r^\prime}\leq (H_1^2)^\prime(\mu(\kappa_1))$, and $x^\prime \leq a^\prime$. By $(\star)$, we have
\begin{equation} \label{notb}
      (r^\prime_0{}^\frown \langle f^\prime_1 \oplus,\mu,\lambda_1(\mu),(h^0_1)^{r^\prime},(h^1_1)^{r^\prime}(H_1^2)^\prime(\mu(\kappa_1)) \rangle ,a^\prime) \Vdash \neg b. \tag*{$(\dagger)$}
 \end{equation}
  
Let $G$ be a generic object for 
$\mathbb{P}_{h_1^0(\mu(\kappa_1))}\times \Col(\kappa_0^+,<\mu(\kappa_1)) \times \Col(\mu(\kappa_1),\rho_1(\mu)^+)$ containing
$(r_0^\prime,(h_1^0)^{r^\prime},(h_1^1)^{r^\prime})$.  Since $(r^\prime_0,(h_1^0)^{r^\prime},(h_1^1)^{r^\prime}) \leq (t(\mu),h^0(\mu),h^1(\mu))$, we have $(t(\mu),h^0(\mu),h^1(\mu)) \in G$. Then find $(t,(h^0)^*,(h^1)^*) \in G$ according to the property \ref{P1},  i.e. 

\begin{equation} \label{b}
(t ^\frown \langle f_1^\prime \oplus \mu,\lambda_1(\mu),(h^0)^*,(h^1)^*,(H^2)^\prime(\mu(\kappa_1)) \rangle ,a^\prime)\Vdash b. \tag*{$(\dagger\dagger)$}
\end{equation}
By the directedness of $G$, we may assume $t \leq r_0^\prime, (h^0)^* \leq (h_1^0)^{r^\prime}$, and $(h^1)^* \leq (h_1^1)^{r^\prime}$.
Thus, from \ref{notb}, we have

  \[
  (t^\frown \langle f_1^\prime \oplus \mu,\lambda_1(\mu),(h^0)^*,(h^1)^*,(H_1^2)^\prime(\mu(\kappa_1))\rangle,a^\prime) \Vdash \neg b, 
\]
which is a contradiction when comparing to \ref{b}.

With the way we shrank the measure one set $A_1^\prime$, we have that for every $\widetilde{\mu}\in A_1^q$,$(t(\widetilde{\mu}\restriction d_1^\prime),h^0(\widetilde{\mu}\restriction d_1^\prime),h^1(\widetilde{\mu}\restriction d_1^\prime))$
has the property (P1). By the same proof, $(q+\widetilde{\mu} ,a^\prime)\Vdash b$. Finally, we are going to show that $(q,a^\prime) \Vdash b$. If not, let $(q^\prime,x^\prime) \leq (q,a^\prime)$ such that $(q^\prime,x^\prime) \Vdash \neg b$, but we can extend further so that $1 \in \supp(q^\prime)$. Hence, $(q^\prime,x^\prime) \leq (q+\widetilde{\mu},a^\prime)$ for some $\widetilde{\mu}\in A_1^q$, but $(q+\widetilde{\mu},a^\prime) \Vdash b$, a contradiction.

Hence, we have finished the proof for $\eta$ a successor ordinal.

\end{claimproof}

\smallskip

\noindent \underline{\textbf{CASE 3}: $\mathbb{\eta}$ is limit}
\smallskip

The proofs for the limit cases are essentially the same, we may assume $\eta=\omega$. Suppose for simplicity that $p= \langle p_n : n<\omega \rangle$ is pure. Write 
\begin{center}
$p_n=\langle f_n^p,A_n^p,(H_n^0)^p,(H_n^1)^p,(H_n^2)^p \rangle$ for each $n$.
\end{center}
Recall $\mathbb{A}$ is $\overline{\kappa}_\omega^+$-closed. Let $a \in \mathbb{A}$. We will build inductively a $\leq^*$-sequence $\{ (q^m,a^m): m<\omega \} $ where $q^0 \leq^* p$, $a^0 \leq a$. We will show that $(q,a^\prime)$, the $\leq^*$-lower bound of the sequence $\{(q^m,a^m):m<\omega\}$, will decide $b$.
We divide our constructions into two parts: $q^0,a^0$, and $q^m,a^m$ for positive $m$.

\textbf{Construction of} $q^0$: Define
\begin{center}
$\mathbb{Q}_0=\{(f,\vec{r}) \in (\mathcal{A}(\kappa_0,\lambda)/f_0^p,\leq)\times ((\mathbb{P}/p)\setminus 1,\leq^*): \dom(f) \mbox{ is a subset of $\dom(f_1^r)$ }\}$.
\end{center}

Here note that $f_1^r$ is the first Cohen part of $\vec{r}$. Observe that $\mathbb{Q}_0$ is $\kappa_0^+$-closed. Let $\theta$ be a sufficiently large regular cardinal. Let $N_0 \prec H_\theta$ be such that $|N_0|=\kappa_0$, ${}^{<\kappa_0}N_0 \subseteq N_0$, $p,a,\mathbb{Q}_0,\mathbb{P},\mathbb{A} \in N_0$. Enumerate dense open sets in $(\mathbb{Q}_0 \times \mathbb{A})\cap N_0$ as $\{D_i: i<\kappa_0\}$. Build a $\mathbb{Q}_0 \times \mathbb{A}$-decreasing sequence $\{(f^\prime_{0,i},\vec{r}_{0,i},a_i):i<\kappa_0\}$, each in $N_0$, where $f^\prime_{0,0}=f_0^p$, $\vec{r}_{0,0}=p \setminus 1$, $a_0=a$, and for all $i$, $(f^\prime_{0,i+1},\vec{r}_{0,i+1},a_{i+1}) \in D_i$. Let $(f_0^\prime,\vec{r}_0,a^0)$ be a lower bound of the sequence $\{(f^\prime_{0,i},\vec{r}_{i},a_i):i<\kappa_0\}$. Set $d_0^\prime=\dom(f_0^\prime)$, which is just $N_0 \cap \lambda$. Let $A_0^\prime\in E_0(d_0^\prime)$ be such that $A_0^\prime \subseteq A_0(d_0^\prime)$ and $A_0^\prime$ projects down to a subset of $A_0^p$. As usual, $A_0^\prime \subseteq N_0$.

Let $ \nu \in A_0^\prime(\kappa_0)$. By Lemma \ref{sparseuf}, let $\{\mu_j,h_j^0,h_j^1\}_{j<\lambda_0(\nu)}$ be an enumeration in $N_0$ of the triples $\mu,h^0,h^1$ where $\mu\in A_0^\prime$, $\mu(\kappa_0)=\nu$, $h^0 \in \Col(\omega_1,<\nu)$, and $h^1\in \Col(\nu,\rho_0(\nu)^+),$ respectively.

Define $D_\nu$ as the collection of $(g,\vec{r},x)\in \mathbb{Q}_0 \times \mathbb{A}$ such that there is an $h \in \Col(\lambda_0(\nu)^+,<\kappa_0)$ with $h \leq(H_0^2)^p(\nu)$ meeting the following requirements:

\begin{enumerate}
    \item For all $j<\lambda_0(\nu)$, $\dom(\mu_j) \subset \dom(g)$.
    \item For all $j<\lambda_0(\nu)$, 
    \begin{itemize}
        \item EITHER $(\langle g \oplus \mu_j,\lambda_0(\nu),h_j^0,h_j^1,h \rangle ^\frown \vec{r},x) $ decides $b$,
        \item OR there is no $g^\prime \leq g$, $h^\prime \leq h$,  $\vec{r}{}^\prime \leq^*\vec{r}$, and $x^\prime \leq x$ such that $\dom(g^\prime)$ is a subset of the Cohen part of $\vec{r}{}^\prime$, and the condition 
        $(\langle g^\prime \oplus \mu_j,\lambda_0(\nu),h_j^0,h_j^1,h^\prime \rangle ^\frown \vec{r}{}^\prime,x^\prime) $ decides $b$. 
    \end{itemize}
\end{enumerate}
Similar to the one-extender case, the set $D_\nu$ is a dense, open subset of $\mathbb{Q}_0 \times \mathbb{A}$, and $D_\nu \in N_0$. Hence, we have $(f_0^\prime,\vec{r}_0,a^0) \in D_\nu$ with a witness $h \in \Col(\lambda_0(\nu)^+,<\kappa_0)$. Define $(H_0^2)^\prime(\nu)=h$.

We record some properties of $(H_0^2)^\prime$: 

\medskip

\noindent $(\star_0)$: 
For each $\mu \in A_0^\prime$ with $\nu=\mu(\kappa_0)$, we have that for all $h^0 \in \Col(\omega_1,<\nu)$ and $h^1 \in \Col(\nu,\rho_0(\nu)^+)$, 
\begin{itemize}
    \item EITHER $(\langle f_0^\prime \oplus \mu,\lambda_0(\nu),h^0,h^1,(H^2_0)^\prime(\nu) \rangle ^\frown \vec{r}_0,a^0 \rangle $  decides $b$,
    \item OR there is no $g \leq f_0^\prime,h\leq (H_0^2)^\prime(\nu)$,  $\vec{r}\leq^* \vec{r}_0$ such that $\dom(g)$ is a subset of the Cohen part of $\vec{r}$, and $x\leq a^0$ such that $(\langle g \oplus \mu,\lambda_0(\nu),h^0,h^1,h \rangle ^\frown \vec{r},x)$ decides $b$. 
\end{itemize}

Now for each $\mu \in A_0^\prime$, find  $h^0(\mu) \leq H^0(\mu \restriction d_0^p)$, $h^1(\mu) \leq H^1(\mu \restriction d_0^p)$,  (if they exist), such that 
\begin{center}
    $(\langle f^\prime_0 \oplus \mu,\lambda_0(\mu),h^0(\mu),h^1(\mu),(H_0^2)^\prime(\mu(\kappa_0)) \rangle^\frown \vec{r}_0,a^0)$ decides b,
\end{center}
otherwise, $h^0(\mu)=H^0_0(\mu \restriction d_0^p)$, $h^1(\mu)=H^1_0(\mu \restriction d_0^p)$.

\medskip

Now set
\begin{align*}
    B_0 & = \{\mu \in A_0^\prime : (\langle f_0^\prime \oplus \mu,\lambda_0(\mu),h^0(\mu),h^1(\mu),(H^2_0)^\prime(\mu(\kappa_0))\rangle ^\frown \vec{r}_0,a^0) \Vdash b\} \\
    B_1 & = \{\mu \in A_0^\prime : (\langle f_0^\prime \oplus \mu,\lambda_0(\mu),h^0(\mu),h^1(\mu),(H^2_0)^\prime(\mu(\kappa_0))\rangle ^\frown \vec{r}_0,a^0)\Vdash  \neg b\} \\
    B_2 & = \{\mu \in A_0^\prime : (\langle f_0^\prime \oplus \mu,\lambda_0(\mu),h^0(\mu),h^1(\mu),(H^2_0)^\prime(\mu(\kappa_0))\rangle ^\frown \vec{r}_0,a^0) \nparallel b\}
\end{align*}

We see that $A_0^\prime=B_0 \sqcup B_1 \sqcup B_2$. As in  the case of one extender, choose $i_0\in\{0,1,2\}$ such that $B_{i_0}\in E_0(d_0^\prime)$. 
Define $f_0^{\prime\prime}=f_0^\prime$. Let $A_0^{\prime\prime}=B_{i_0}$. For $l=0,1$, set $\dom(H_0^l)^{\prime\prime}=A^{\prime\prime}_0$, and $(H_0^l)^{\prime \prime}(\mu)=h^l(\mu)$. Let $(H^2)^{\prime\prime}=(H^2)^\prime \restriction A_0^{\prime\prime}(\kappa_0)$.

Finally, extend $\vec{r}_0$ to $\vec{r}_0{}^{\prime}$ in a natural way: make sure $\dom(f^{\prime\prime})$ is a subset of the Cohen part in $\vec{r}_0^{\prime}$, $f_m^{\vec{r}_0^{\prime}}(\xi)=f_m^{\vec{r}_0}(\xi)$ if $\xi$ is in $\dom(f_m^{\vec{r}_0})$, otherwise $f_m^{\vec{r}_0^\prime}(\xi)=0$. All measure one sets project down to subsets of the corresponding measure one sets in $\vec{r}_0$, meaning $A_m^{\vec{r}_0^\prime} \restriction \dom(f_m^{\vec{r}_0}) \subseteq A_m^{\vec{r}_0}$. If $\mu$ belongs to $A^{\vec{r}_0^{\prime}}_m$, define $(H^0_m)^{\vec{r}_0^\prime}(\mu)=(H^0_m)^{\vec{r}_0}(\mu \restriction \dom(f_m^{\vec{r}_0}))$ $(H^1_m)^{\vec{r}_0^\prime}(\mu)=(H^1_m)^{\vec{r}_0}(\mu \restriction \dom(f_m^{\vec{r}_0}))$, and $H^2_m(\nu)=(H^2_m)^{\vec{r}_0^\prime}(\nu)=(H^2_m)^{\vec{r}_0}(\nu)$, see more details at \ref{A4} of Lemma \ref{integrate}. Set $q^0=\langle f^{\prime\prime},A_0^{\prime\prime},(H_0^0)^{\prime\prime},(H_0^1)^{\prime\prime},(H_0^2)^{\prime\prime} \rangle ^\frown \vec{r}_0^\prime$. This finishes the construction of $q^0$ and $a^0$.

\textbf{Construction of} $q^{m+1}$: Suppose $q^m$ and $a^m$ are constructed. Define 

\begin{center}
    $\mathbb{Q}_{m+1}=\{(f,\vec{r}) \in (\mathcal{A}(\kappa_{m+1},\lambda)/f_{m+1}^{q^m}),\leq) \times (\mathbb{P}/q^m \setminus (m+2),\leq^*): \dom(f) \mbox{ is a subset of } \dom(f_{m+2}^r)\}$.
\end{center}

Here note that $f_{m+2}^r$ is the first Cohen part of $\vec{r}$. $\mathbb{Q}_{m+1}$ is $\kappa_{m+1}^+$-closed. Let $\theta$ be a sufficiently large regular cardinal. Let $N_{m+1}\prec H_\theta$ be such that $|N_{m+1}|=\kappa_{m+1}$, ${}^{<\kappa_{m+1}}N_{m+1}\subseteq N_{m+1}$, $q^m,a^m,\mathbb{P},\mathbb{A}\in N_{m+1}$ (so $\mathbb{Q}_{m+1} \in N_{m+1}$).

Enumerate the dense open sets in $(\mathbb{Q}_{m+1} \times \mathbb{A}) \cap N_{m+1}$ as $\{D_i:i<\kappa_{m+1}\}$. Build a $\mathbb{Q}_{m+1} \times \mathbb{A}$-decreasing sequence $\{(f^\prime_{m+1,i},\vec{r}_{m+1,i},a^m_i): i<\kappa_{m+1}\}$, each in $N_{m+1}$, where $f^\prime_{m+1,0}=f_{m+1}^{q^m}$, $\vec{r}_{m+1,0}=q^m \setminus (m+2)$, $a^m_0=a^m$, and for all $i$, $(f^\prime_{m+1,i+1},\vec{r}_{m+1,i+1},a^m_{i+1}) \in D_i$. Let $(f^\prime_{m+1},\vec{r}_{m+1},a^{m+1})$ be the greatest lower bound of the sequence $\{(f^\prime_{m+1,i},\vec{r}_{m+1,i},a^m_{i}): i<\kappa_{m+1}\}$.  $d_{m+1}^\prime=\dom(f_{m+1}^\prime)$, which is $N_{m+1}\cap \lambda$. Let $A_{m+1}^\prime \in E_{m+1}(d_{m+1}^\prime)$ be such that $A_{m+1}^\prime \subseteq A_{m+1}(d_{m+1}^\prime)$, and $A_{m+1}^\prime$ projects down to a subset of $A_{m+1}^{q^m}$. As usual, $A_{m+1}^\prime \subseteq N_{m+1}$.

Let $\nu \in A_{m+1}^\prime(\kappa_{m+1})$. By Lemma \ref{sparseuf}, let $\{t_j,\mu_j,h^0_j,h^1_j\}_{j<\lambda_{m+1}(\nu)}$ be an enumeration in $N_{m+1}$ of the quadruples $t,\mu,h^0,h^1$ where $t \in \mathbb{P}_{\langle h_{m+1}^n(\nu):n\leq m \rangle}$, $\mu\in A_{m+1}^\prime$ with $\mu(\kappa_{m+1})=\nu$, $h^0 \in \Col(\kappa_m^+,<\nu)$, and $h^1 \in \Col(\nu,\rho_{m+1}(\nu)^+)$, respectively.

Define $D_\nu$ as the collection of $(g,\vec{r},x)\in \mathbb{Q}_{m+1} \times \mathbb{A}$ such that there is an $h \in \Col(\lambda_{m+1}(\nu)^+,<\kappa_{m+1})$ with $h\leq (H_{m+1}^2)^{q^m}(\nu)$ meeting the following requirements:

\begin{enumerate}
    \item For all $j<\lambda_{m+1}(\nu)$, $\dom(\mu_j) \subset \dom(g)$.
    \item For all $j<\lambda_{m+1}(\nu)$,
    \begin{itemize}
        \item EITHER $(t^\frown\langle g \oplus \mu_j,\lambda_{m+1}(\nu),h^0_j,h^1_j,h \rangle ^\frown \vec{r},x)$ decides $b$,
        \item OR there is no $g^\prime \leq g$, $h^\prime \leq h$, $\vec{r}{}^\prime \leq^* \vec{r}$, and $x^\prime \leq x$ such that $\dom(g^\prime)$ is a subset of the Cohen part of $\vec{r}{}^\prime$, and the condition \\ $(t^\frown \langle g^\prime \oplus \mu_j,\lambda_{m+1}(\nu),h^0_j,h^1_j,h^\prime \rangle^\frown \vec{r}{}^\prime,x^\prime)$ decides $b$.
    \end{itemize}
\end{enumerate}
Similar to the other cases, the set $D_\nu$ is a dense, open subset of $\mathbb{Q}_{m+1} \times \mathbb{A}$, and
$D_\nu \in N_{m+1}$. Hence, we have $(f_{m+1}^\prime,\vec{r}_{m+1},a^{m+1}) \in D_\nu$ with a witness $h \in \Col(\lambda_{m+1}(\nu)^+,<\kappa_{m+1})$. Define $(H^2_{m+1})^\prime(\nu)=h$.

We record some properties of $(H^2_{m+1})^\prime$: 

\smallskip

$(\star_{m+1})$ 
 For each $\mu \in A_{m+1}^\prime$ with $\nu=\mu(\kappa_{m+1})$, we have that for all $t \in \mathbb{P}_{\langle h_{m+1}^n(\nu): n \leq m \rangle}$, $h^0\in \Col(\kappa_m^+,<\nu)$, and $h^1 \in \Col(\nu,\rho_{m+1}(\nu)^+)$,
\begin{itemize}
    \item EITHER $(t^\frown \langle f_{m+1}^\prime \oplus \mu,\lambda_{m+1}(\nu),h^0,h^1,(H^2_{m+1})^\prime(\mu)\rangle ^\frown \vec{r}_{m+1},a^{m+1})$ decides $b$,
    \item OR there is no $g\leq f_{m+1}^\prime, h\leq (H^2_{m+1})^\prime(\nu)$, $\vec{r} \leq^* \vec{r}_{m+1}$, and $x\leq a^{m+1}$ such that $(t^\frown \langle g \oplus \mu,\lambda_{m+1}(\nu),h^0,h^1,h \rangle^\frown \vec{r},x)$ decides $b$.
\end{itemize}

\smallskip

Fix $\mu \in A_1^\prime$. Let $\dot{G}$ be a canonical name for a generic object for $\mathbb{P}_{\langle h_{m+1}^n(\mu(\kappa_{m+1})):n\leq m \rangle} \times \Col(\kappa_m^+,<\mu(\kappa_{m+1})) \times \Col(\mu(\kappa_{m+1}),\rho_{m+1}(\mu)^+)$. Let $(\vec{q}^m_{\leq m})_\mu=\langle ((q^m)_n)_\mu:n \leq m)$, which is just the first $m+1$ coordinates of $q^m$ squished by $\mu$.  Since the product of collapses $\Col(\kappa_m^+,<\mu(\kappa_{m+1})) \times \Col(\mu(\kappa_{m+1}),\rho_{m+1}(\mu)^+)$ is $\kappa_m^+$-closed, we apply the induction hypothesis for the Prikry property for the $m$-extender case to the condition 

\begin{center}
$((\vec{q}^m_{\leq m})_\mu,(H_{m+1}^0)^{q^m}(\mu \restriction \dom(f_{m+1}^{q^m})),(H_{m+1}^1)^{q^m}(\mu \restriction \dom(f_{m+1}^{q^m})))$.
\end{center}

Let $t(\mu) \leq^* (\vec{q}^m_{\leq m})_\mu$, $h^0(\mu)\leq (H_{m+1}^0)^{q^m}(\mu \restriction \dom(f_{m+1}^{q^m}))$, and \\ $h^1(\mu) \leq (H_{m+1}^1)^{q^m}(\mu \restriction \dom(f_{m+1}^{q^m}))$ be such that 

\begin{center}
    $(t(\mu),h^0(\mu),h^1(\mu))$ decides if $\exists (t,(h^0)^*,(h^1)^*) \in \dot{G}$, \\
    $(t^\frown \langle f_{m+1}^\prime,\lambda_{m+1}(\mu),(h^0)^*,(h^1)^*,(H^2_{m+1})^\prime (\mu(\kappa_{m+1})) \rangle ^\frown \vec{r}_{m+1},a^{m+1}) \parallel b$.
\end{center}

By strengthening $t(\mu)$ (under $\leq^*$), $h^0(\mu)$, and $h^1(\mu)$ further, we can assume that for each $\mu \in A_{m+1}^\prime$, $(t(\mu),h^0(\mu),h^1(\mu))$ satisfies exactly one out of the following three mutually exclusive properties: 

\begin{enumerate}[label=(Q\arabic*)]
    \item \label{Q1} $(t(\mu),h^0(\mu),h^1(\mu)) \Vdash \exists (t,(h^0)^*,(h^1)^*) \in \dot{G}$, \\
    $(t^\frown \langle f_{m+1}^\prime \oplus \mu,\lambda_{m+1}(\mu),(h^0)^*,(h^1)^*,(H^2_{m+1})^\prime(\mu(\kappa_{m+1}))\rangle ^\frown \vec{r}_{m+1},a^{m+1}) \Vdash b$.
    \item \label{Q2} $(t(\mu),h^0(\mu),h^1(\mu)) \Vdash \exists (t,(h^0)^*,(h^1)^*) \in \dot{G}$, \\
    $(t^\frown \langle f_{m+1}^\prime \oplus \mu,\lambda_{m+1}(\mu),(h^0)^*,(h^1)^*,(H^2_{m+1})^\prime(\mu(\kappa_{m+1}))\rangle ^\frown \vec{r}_{m+1},a^{m+1}) \Vdash \neg b$.
    \item \label{Q3} $(t(\mu),h^0(\mu),h^1(\mu)) \Vdash \nexists (t,(h^0)^*,(h^1)^*) \in \dot{G}$, \\
    $(t^\frown \langle f_{m+1}^\prime \oplus \mu,\lambda_{m+1}(\mu),(h^0)^*,(h^1)^*,(H^2_{m+1})^\prime(\mu(\kappa_{m+1}))\rangle ^\frown \vec{r}_{m+1},a^{m+1}) \parallel b$.
\end{enumerate}

Shrink $A_{m+1}^\prime$ so that every $\mu\in A_{m+1}^\prime$ falls into the same case above.

Use Lemma \ref{integrate} to find a condition $q^{m+1}$ such that 
\begin{center}
    $q^{m+1} \leq ^* \langle q^m_n:n \leq m \rangle^\frown \langle f_{m+1}^\prime,A_{m+1}^\prime,(H_{m+1}^0)^{q^m}(H_{m+1}^1)^{q^m},(H_{m+1}^2)^\prime \rangle^\frown \vec{r}_{m+1}$
\end{center}
satisfying all properties stated in Lemma \ref{integrate}, where $t(\mu),h^0(\mu),h^1(\mu)$ are as described right before introducing the properties \ref{Q1},\ref{Q2}, and \ref{Q3}, which means that if $\tau \in A_{m+1}^{q^{m+1}}$ and $\mu=\tau \restriction d_{m+1}^\prime$, and $\nu=\mu(\kappa_{m+1})$, then for $k \leq m$, $f_k^{q^{m+1}+\tau}=f_k^{t(\mu)}$, $(q^{m+1} + \tau) \restriction (m+1) \leq^* t(\mu)$, for $l=0,1$, $(H_{m+1}^l)^{q^{m+1}}(\tau)=h^0(\mu)$, $(H_{m+1}^2)^{q^{m+1}}(\nu)=(H_{m+1}^2)^\prime(\nu)$, and $f_{m+1}^{q^{m+1}} \leq f_{m+1}^\prime$. We have finished the construction of $q^{m+1}$, and $a^{m+1}$.

Recall we take $(q,a^\prime)$ as a lower bound of the sequence $\{(q^m,a^m):m<\omega\}$. 

\begin{claim}
 There is a direct extension of $(q,a^\prime)$ which decides $b$.
 \end{claim}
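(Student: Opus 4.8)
The plan is to prove the stronger statement that $(q,a^\prime)$ itself decides $b$; since $(q,a^\prime)$ is a direct extension of itself, this already yields the claim. The heart of the argument is a localization of the decision to the top coordinate of any deciding condition, after which a short genericity/commutativity argument forces $(q,a^\prime)$ to decide.

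First I would isolate the following key fact: \emph{if $(r,x)\leq (q,a^\prime)$ forces $b$ and $\supp(r)\neq\emptyset$, then, writing $n=\max(\supp(r))$, one has $(q+\mu,a^\prime)\Vdash b$ for every addable $\mu\in A_n^q$}, and symmetrically with $\neg b$ in place of $b$. To see this, fix $\mu^\prime\in A_n^r$ witnessing $n\in\supp(r)$ and put $\mu=\mu^\prime\restriction d_n^\prime$. Splitting $r$ into a lower part $t=r\restriction n$ (a condition in the length-$n$ forcing, possibly impure), the coordinate-$n$ data, and an upper part $r\setminus(n+1)\leq^*\vec r_n$ with $x\leq a^n$, the property $(\star_n)$ (or $(\star_0)$ when $n=0$) forces its \textbf{EITHER} branch, so the canonical condition with $f_n^\prime\oplus\mu$ at coordinate $n$ forces $b$. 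This is exactly the situation handled in Case~1 (when $n=0$) and Case~2 (when $n\geq 1$): at stage $n$ of the construction we shrank the measure one set so that every object falls into the same one of the trichotomy cases \ref{Q1}--\ref{Q3} (respectively $B_0,B_1,B_2$), and the existence of a $b$-forcing extension rules out \ref{Q2} and \ref{Q3}, pinning us in \ref{Q1} (respectively $\mu\in B_0$). The directedness-and-genericity argument of Case~2 then gives $(q+\mu,a^\prime)\Vdash b$ for every such $\mu$, using that $(q+\mu)\restriction n\leq^* t(\mu\restriction d_n^\prime)$ and that the coordinate-$n$ collapse parts of $q+\mu$ are $h^0(\mu\restriction d_n^\prime),h^1(\mu\restriction d_n^\prime)$ by the conclusion of Lemma \ref{integrate}.

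With the key fact in hand, suppose toward a contradiction that $(q,a^\prime)$ does not decide $b$. Then there are $(r_0,x_0)\Vdash b$ and $(r_1,x_1)\Vdash\neg b$ below $(q,a^\prime)$. If either $r_0$ or $r_1$ is pure we already have a direct extension of $(q,a^\prime)$ deciding $b$ and are done, so assume both are impure, with top coordinates $n_0=\max(\supp(r_0))$ and $n_1=\max(\supp(r_1))$. By the key fact, $(q+\mu,a^\prime)\Vdash b$ for every addable $\mu\in A_{n_0}^q$, and $(q+\mu,a^\prime)\Vdash\neg b$ for every addable $\mu\in A_{n_1}^q$. If $n_0=n_1$, then a single addable $\mu\in A_{n_0}^q$ produces a condition $(q+\mu,a^\prime)$ forcing both $b$ and $\neg b$, a contradiction. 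If $n_0\neq n_1$, choose suitable addable $\mu_0\in A_{n_0}^q$ and $\mu_1\in A_{n_1}^q$; since the two objects live on distinct coordinates, the commutativity of adding objects (Lemma~2.4 of \cite{ben2019}) gives $(q+\langle\mu_0,\mu_1\rangle,a^\prime)\leq(q+\mu_0,a^\prime)$ and $(q+\langle\mu_0,\mu_1\rangle,a^\prime)\leq(q+\mu_1,a^\prime)$, so this single condition forces both $b$ and $\neg b$, again a contradiction. Hence $(q,a^\prime)$ decides $b$.

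Implicit here is a density observation: any $(q^\prime,x^\prime)\leq(q,a^\prime)$ can, if necessary, be extended by an addable object at a prescribed top coordinate so as to lie below the corresponding one-step extension of $q$, which is what lets the information localized at a single coordinate propagate to a genuine decision by $(q,a^\prime)$. The main obstacle is the key fact itself: one must verify that the trichotomy and the data $t(\mu),h^0(\mu),h^1(\mu)$ fixed at stage $n$, together with $(\star_n)$ and the conclusions of Lemma \ref{integrate}, really do reduce the behaviour of an arbitrary deciding extension --- whose lower part $t$ may add objects on \emph{all} coordinates below $n$ --- to the single homogenized coordinate $n$, the lower objects being harmlessly absorbed into the condition ``$t$'' quantified over in $(\star_n)$. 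Carrying out the directedness-and-genericity step exactly as in Case~2, now relative to this absorbed lower part, is the delicate point.
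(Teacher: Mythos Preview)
Your proposal is essentially correct and its core---the ``key fact''---is exactly the argument the paper gives in its CASE~A and CASE~B: from a deciding extension $(r,x)$ with $\max(\supp(r))=n$, one uses $(\star_n)$ together with the stage-$n$ trichotomy and the output of Lemma~\ref{integrate} to pin down case \ref{Q1} (resp.\ $B_0$) and conclude $(q+\mu,a')\Vdash b$ for every addable $\mu\in A_n^q$. Two small remarks. First, the phrase ``$\mu'\in A_n^r$'' is a slip: since $n\in\supp(r)$ there is no $A_n^r$; you mean the object in $A_n^q$ (or in the intermediate $A_n^{q^n}$) through which $r$ factors. Second, your closing commutativity argument with two conditions $(r_0,x_0)$ and $(r_1,x_1)$ is valid but unnecessary: once the key fact gives $(q+\mu,a')\Vdash b$ for all addable $\mu\in A_n^q$, the paper simply observes that every extension of $(q,a')$ is compatible with some such $(q+\mu,a')$ (by extending into coordinate $n$), and concludes $(q,a')\Vdash b$ directly---no second condition or commutativity of object-addition is needed. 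The paper also phrases the argument with a minimal choice of $\max(\supp(r))$, but as you implicitly note, that minimality is not actually used.
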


 If we can find a direct extension of $(q,a^\prime)$ deciding $b$, then the proof is done. Suppose this is not the case. Let $(r,x) \leq (q,a^\prime)$ be such that $(r,x) \Vdash b$ and $r$ is not pure.  Suppose $r$ is not pure, which has the least possible value of $\max(\supp(r))$. Our proof is divided into 2 cases: $\max(\supp(r))=0$, and $\max(\supp(r))>0$.

\underline{CASE A} $\max(\supp(r))=0$. 

This means $\supp(r)=1$. Hence, $r \leq^* q+\mu^\prime$ for some $\mu^\prime \in A_0^q$.

We refer to all the notations in the construction of $q^0$. Set $\mu=\mu^\prime \restriction d_0^\prime$, and $\nu=\mu(\kappa_0)$. We have $r \setminus 1 \leq^* q \setminus 1 \leq^*\vec{r}{}^\prime_0\leq^*\vec{r}_0$. Similarly, we can trace back to see that $f^r \leq f_0^\prime \oplus \mu$. Also, $(h_0^2)^r \leq (H_2^0)^\prime(\nu)$ and $x \leq a^\prime \leq a^0$. By the property $(\star_0)$, $(\langle f_0^\prime \oplus \mu,\lambda_0(\nu),(h^0)^r,(h^1)^r,(H_0^2)^\prime(\nu)\rangle^\frown \vec{r}_0,a^0 ) \Vdash b$. This means $B_{i_0}=B_0$. Thus, for every $\widetilde{\mu} \in A_0^q$, we have

\begin{equation}\label{genb}
    (f_0^\prime \oplus (\widetilde{\mu}\restriction d_0^\prime),\lambda_0(\widetilde{\mu}),h^0_0(\widetilde{\mu}),h^1_0(\widetilde{\mu},(H^2_0)^\prime(\widetilde{\mu}(\kappa_0))^\frown \vec{r}_0,a^0) \Vdash b. \tag*{$(\dagger)$}
\end{equation}

One can check that $(q+\widetilde{\mu},a^\prime)$ is stronger than the condition in \ref{genb}. Therefore $(q+\widetilde{\mu},a^\prime) \Vdash b$. Since every extension of $(q, a^\prime)$ is compatible with $(q+\widetilde{\mu},a^\prime)$ for some $\widetilde{\mu}$,
  $(q, a^\prime) \Vdash b$. 

\underline{CASE B} $\max(\supp(r))=m+1$ for some $m<\omega$. 

We will refer to all the notations 
in the construction of $q^{m+1}$. Suppose part of the extension used some $\mu^\prime \in A_{m+1}^q$. Set $\mu=\mu^\prime \restriction d_{m+1}^\prime$ and $\nu=\mu(\kappa_{m+1})$. By tracing back, and lemma 5, the following properties hold for $r$ and $x$:
\begin{enumerate}
    \item \label{Prik1} $r \restriction (m+1) \leq t(\mu)$.
    \item \label{Prik2}$f_{m+1}^r \leq f_{m+1}^\prime \oplus \mu$.
    \item \label{Prik3}For $l=0,1$, $(h_{m+1}^l)^r \leq h^l(\mu)$.
    \item \label{Prik4}$(h_{m+1}^2)^r \leq (H_{m+1}^2)^\prime(\nu)$.
    \item \label{Prik5}$r \setminus (m+2) \leq^* \vec{r}_{m+1}$.
    \item \label{Prik6}$x \leq a^{m+1}$.
\end{enumerate}

By (\ref{Prik2}),(\ref{Prik4}),(\ref{Prik5}), and (\ref{Prik6}), and by $(\star_{m+1})$, we have

\begin{center}
 $(r \restriction (m+1)^\frown \langle f_{m+1}^\prime \oplus \mu, \lambda_{m+1}(\mu),(h_{m+1}^0)^r,(h_{m+1}^1)^r,(H_{m+1}^2)^\prime(\nu) \rangle ^\frown \vec{r}_{m+1},a^{m+1}) \Vdash b$.
\end{center}

By (\ref{Prik1}) and (\ref{Prik3}), $(t(\mu),h^0(\mu),h^1(\mu))$ has the property \ref{Q1}. Set 
\begin{center}
    $r^\mu:=t(\mu)^\frown\langle f_{m+1}^\prime \oplus \mu,\lambda_{m+1}(\nu),h^0(\mu),h^1(\mu),(H_{m+1}^2)^\prime(\mu(\kappa_{m+1}))\rangle ^\frown \vec{r}_{m+1}$ 
\end{center}

We claim that already $(r^\mu,a^{m+1}) \Vdash b$. Suppose not. Find $r^\prime \leq r,x^\prime \leq a^{m+1}$ such that $(r^\prime,x^\prime) \Vdash \neg b$. let $G$ be a generic extension of $\mathbb{P}_{\langle h_{m+1}^n(\nu):n \leq m \rangle}$ containing $( r^\prime \restriction (m+1),(h_{m+1}^0)^{r^\prime},(h_{m+1}^1)^{r^\prime})$, hence, containing $(t(\mu),h^0(\mu),h^1(\mu))$. By
the property \ref{Q1}, we can find $(t,(h^0)^*,(h^1)^*)\in G$ below $( r^\prime \restriction (m+1),(h_{m+1}^0)^{r^\prime},(h_{m+1}^1)^{r^\prime})$ such that 

\begin{equation}\label{gennotb}
(t^\frown \langle f_{m+1}^\prime \oplus \mu,\lambda_{m+1}(\mu),(h^0)^*,(h^1)^*,(H^2_{m+1})^\prime(\mu(\kappa_{m+1}))\rangle ^\frown \vec{r}_{m+1},a^{m+1}) \Vdash b. \tag*{$(\dagger\dagger)$}
\end{equation}

Since 
\begin{enumerate}
    \item $t \leq r^\prime \restriction (m+1)$.
    \item $f_{m+1}^{r^\prime} \leq  f_{m+1}^\prime \oplus \mu$.
    \item for $l=0,1,$ $(h^*)^l \leq (h_{m+1}^l)^{r^\prime}$,
    \item $(h_{m+1}^2)^{r^\prime} \leq (H_{m+1}^2)^\prime(\mu(\kappa_{m+1}))$.
    \item $r^\prime \setminus (m+2) \leq \vec{r}_{m+1}$, and
    \item $x^\prime \leq a^{m+1}$.
\end{enumerate}

Combining the fact that $(r^\prime,x^\prime) \Vdash \neg b$, and \ref{gennotb}, we have

\begin{equation*}
    (t^\frown \langle f_{m+1}^{r^\prime},\lambda_{m+1}(\nu),(h^0)^*,(h^1)^*,(h_{m+1}^2)^{r^\prime} \rangle ^\frown r^\prime \setminus (m+2),x^\prime) \Vdash b,\neg b,
\end{equation*}
which is a contradiction.

To show that $(q,a^\prime)$ forces $b$, note that every extension of $(q,a^\prime)$ is compatible with $(q+\widetilde{\mu},a^\prime)$ for some $\widetilde{\mu}^\prime \in A_{m+1}^{q^m}$. Hence, it is enough to show that every extension of $(q,a^\prime)$ by $\widetilde{\mu}^\prime \in A_{m+1}^{q^m}$ forces $b$.

Let $\widetilde{\mu}^\prime \in A_{m+1}^{q^m}$. Let $\widetilde{\mu}=\widetilde{\mu}^\prime \restriction d_{m+1}^\prime$ and $\widetilde{\nu}=\widetilde{\mu}(\kappa_{m+1})$. By the way we shrank $A_{m+1}^\prime$, $(t(\widetilde{\mu}),h^0(\widetilde{\mu}),h^1(\widetilde{\mu}))$ has the property \ref{Q1}. Similar proof as above shows

\begin{center}
($t(\widetilde{\mu})^\frown\langle f_{m+1}^\prime \oplus \widetilde{\mu},\lambda_{m+1}(\widetilde{\nu}),h^0(\widetilde{\mu}),h^1(\widetilde{\mu}),(H_{m+1}^2)^\prime(\widetilde{\nu})\rangle ^\frown \vec{r}_{m+1},a^{m+1}) \Vdash b$. 
\end{center}

Hence, already $(q+\widetilde{\mu}^\prime,a^\prime) \Vdash b$. This completes the proof of the Prikry property.

\end{proof}

A similar proof shows the following statement, which is known as the
  ``strong Prikry property''.

\begin{lemma}\label{strongprikry}
       Let $p\in\mathbb{P}$, and $D$ be a dense open subset of $\mathbb{P}$. Then there is $p^\prime \leq^*p$, and a finite set $I \subseteq \eta$ (can be empty) such that $I\cap \supp(p^\prime)=\emptyset$, and for each $\vec{\mu}\in \prod_{\alpha \in I}A_\alpha^{p^\prime}$, each $\mu_i$ addable, $p^{\prime}+\vec{\mu}\in D$.
\end{lemma}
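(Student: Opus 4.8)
The plan is to re-run the induction on $\eta$ from the proof of the Prikry property, keeping the auxiliary $\overline{\kappa}_\eta^+$-closed poset $\mathbb{A}$ and the same three cases $\eta=1$, $\eta$ a successor, and $\eta$ a limit, but replacing the phrase ``decides $b$'' everywhere by the phrase ``belongs to $D$''. All of the preprocessing carries over unchanged: one builds the $N$-generic Cohen part $f'$, passes to the sparse measure one set $A'\subseteq A(d')$ supplied by Lemma \ref{sparseuf}, and meets the dense sets $D_\nu$ to manufacture the top collapse function $(H^2)'$. The only modification is that each $D_\nu$ is now defined in terms of membership in $D$ rather than forcing $b$, and the products of collapses remain highly closed, so the density and openness of the $D_\nu$ are verified exactly as before.

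The key point is what becomes of the dichotomy $(\star)$. Writing $\alpha$ for the top coordinate being processed, the analogue of $(\star)$ reads: for each stem $\mu$, EITHER the canonical one-step extension $p'+\mu$ already lies in $D$, OR no same-stem $\leq$-extension of $p'+\mu$ lies in $D$. Openness of $D$ makes ``lies in $D$'' upward absolute, which is precisely what lets this dichotomy propagate through the coordinatewise construction just as the two-sidedness of ``decides $b$'' did. Density enters only at the end: once we restrict attention, via the minimality argument below, to the least number of extra coordinates needed to reach $D$, adding still further objects cannot help, so at the maximal required coordinate the second alternative is impossible and the canonical extension must itself be in $D$. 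Consequently the relevant measure one piece — the analogue of $B_0=\{\mu : p'+\mu\in D\}$ — is pinned down exactly as $i_0$ was in the Prikry proof.

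Assembling the finite set $I$ is where strong Prikry genuinely differs from Prikry, and I would build $I$ along the same case division. The inductive hypothesis, applied to the lower block with the top collapses absorbed into $\mathbb{A}$, returns for each stem $\mu$ a finite set $I(\mu)\subseteq\alpha$ together with a direct extension of the lower part witnessing $(p'+\mu)+\vec\nu\in D$ for addable $\vec\nu$ on $I(\mu)$. One then homogenises the assignment $\mu\mapsto I(\mu)$ and the accompanying collapse data, using Lemma \ref{rowbottom} together with the sparseness from Lemma \ref{sparseuf}, so that $I(\mu)$ equals a fixed finite $I_{\mathrm{low}}$ on a measure one set; the integration Lemma \ref{integrate} then glues the coordinatewise witnesses into a single direct extension $p''\leq^* p'$. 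According to whether the top coordinate's measure one piece is the ``$\in D$'' piece, one sets $I=I_{\mathrm{low}}$ or $I=I_{\mathrm{low}}\cup\{\alpha\}$; the disjointness $I\cap\supp(p'')=\emptyset$ holds because every coordinate we touch lies outside the support.

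I expect the main obstacle to be guaranteeing that $I$ stays finite, that is, terminating the recursion in the limit case. I would control this by a minimality argument: for $q\leq^* p$ let $\ell(q)$ be the least number of coordinates, beyond $\supp(q)$, occurring in the support of a member of $D$ extending $q$; density makes $\ell(q)$ finite. Since $\ell$ takes values in $\omega$, the minimum $n$ of $\ell$ over all $\leq^*$-extensions of $p$ is attained, say at $p'$, and because every $\leq^*$-extension of $p'$ is again a $\leq^*$-extension of $p$, no such extension can have rank below $n$. With $\ell(p')$ minimal, the case analysis on $\max(\supp(r))$ from the limit case shows that the members of $D$ compatible with $p'$ have their extra support confined to a fixed finite set of coordinates, which bounds $I$. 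The delicate point is that this bound must be uniform across a measure one set of object-sequences, so that one single $I$ works simultaneously for all addable $\vec\mu$; it is exactly here that the homogenisation via Lemma \ref{rowbottom}, together with the gluing provided by the integration Lemma \ref{integrate}, is indispensable.
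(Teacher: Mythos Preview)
Your proposal is correct and follows essentially the same route as the paper: induct on $\eta$ with the auxiliary $\overline{\kappa}_\eta^+$-closed poset, replace ``decides $b$'' by ``$\in D$'' throughout, apply the inductive hypothesis to the lower block to obtain $I(\mu)$, homogenise via Lemma~\ref{rowbottom}, and glue via Lemma~\ref{integrate}. The one organisational difference is that the paper minimises in the \emph{lexicographic} order on $[\eta]^{<\omega}$ (both when invoking the inductive hypothesis to choose $\vec\alpha^{\mu}$ and at the final step, after the full sequence $\langle q^m\rangle$ has been built and a lower bound $q$ taken), rather than minimising the cardinality $\ell(q)$ over direct extensions as a preliminary step; this pins down the actual set $I$, not merely its size, directly.
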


\begin{proof}
  (Sketch) As in the proof of the Prikry property, we induct on the a stronger statement: by induction on $\eta$, if $\mathbb{A}$ is $\overline{\kappa}_\eta^+$- closed, then for each $(p,a)\in \mathbb{P} \times \mathbb{A}$, and a dense open set $D \subseteq \mathbb{P} \times \mathbb{A}$, there is a condition $(p^\prime,a^\prime)  \leq^* (p,a)$ and a finite set $I \subseteq \eta$ (can be empty) such that $I \cap \supp(p)=\emptyset$, and for each $\vec{\mu} \in \prod_{\alpha \in I}A_\alpha^{p^\prime}$, each $\mu_i$ is addable, and $(p^\prime + \vec{\mu},a^\prime) \in D$. We assume for simplicity that $p$ is pure. The elements of the proof for the case $\eta=\omega$ contain all the elements from the other cases. We will show only the case $\eta=\omega$. The proof has the same style as the proof of the
Prikry property, we assume $\mathbb{A}$ is trivial, and remove $\mathbb{A}$ from the proof to make the proof more readable.

  We only emphasise the key different ingredients from the proof of the Prikry property. For more details, look at the proof of the Prikry property.

Assume $p=\langle \langle f_n^p,A_n^p,(H_n^0)^p,(H_n^1)^p,(H_n^2)^p \rangle \rangle$ is a pure condition.
We will build a $\leq^*$-decreasing sequence $\langle q_m : m<\omega \rangle$,
  it will then be routine to check that a lower bound of the sequence $\langle q_m : m<\omega \rangle$ will satisfy the condition for the strong Prikry property.

\textbf{Construction of} $q^0$: Let 
\begin{center}
  $\mathbb{Q}_0=\{(f,\vec{r}) \in (\mathcal{A}(\kappa_0,\lambda)/f_0^p,\leq)\times ((\mathbb{P}/p)\setminus 1,\leq^*): \dom(f) \mbox{ is a subset of $\dom(f_1^r)$ }\}$.
\end{center}

Note that $f_1^r$ is the first Cohen part of $\vec{r}$. Fix a sufficiently large regular cardinal $\theta$. Build an elementary submodel $N_0 \prec H_\theta$ of size $\kappa_0$ closed under $<\kappa_0$-sequences containing enough information. Let $(f_0^\prime,\vec{r}_0)$ be $(N_0,\mathbb{Q}_0)$-generic. Let $d_0^\prime=\dom(f_0^\prime)$ and $A_0^\prime\in E_0(d_0^\prime)$ be such that $A_0^\prime \subseteq A_0(d_0^\prime)$ and $A_0^\prime$ projects down to a subset of $A_0^p$.

Let $\nu \in A^\prime_0(\kappa_0)$. As usual, let $\{\mu_j,h_j^0,h_j^1\}_{j<\lambda_0(\nu)}$ be an enumeration in $N_0$ of the triples $\mu,h^0,h^1$ where $\mu\in A_0^\prime$, $\mu(\kappa_0)=\nu$, $h^0 \in \Col(\omega_1,<\nu)$, and $h^1\in \Col(\nu,\rho_0(\nu)^+),$ respectively.

Define $D_\nu$ as the collection of $(g,\vec{r})\in \mathbb{Q}_0$ such that there is an $h \in \Col(\lambda_0(\nu)^+,<\kappa_0)$ with $h \leq(H_0^2)^p(\nu)$ meeting the following requirements:

\begin{enumerate}
    \item For all $j<\lambda_0(\nu)$, $\dom(\mu_j) \subset \dom(g)$.
    \item For all $j<\lambda_0(\nu)$, 
    \begin{itemize}
        \item EITHER $(\langle g \oplus \mu_j,\lambda_0(\nu),h_j^0,h_j^1,h \rangle ^\frown \vec{r}) \in D$,
        \item OR there is no $g^\prime \leq g$, $h^\prime \leq h$, and $\vec{r}{}^\prime \leq^*\vec{r}$ such that $\dom(g^\prime)$ is a subset of the Cohen part of $\vec{r}{}^\prime$, and the condition 
       
        $(\langle g^\prime \oplus \mu_j,\lambda_0(\nu),h_j^0,h_j^1,h^\prime \rangle ^\frown \vec{r}^\prime) \in D$. 
    \end{itemize}
\end{enumerate}
Then $D_\nu$ is a dense open subset of $\mathbb{Q}_0$ and is in $N_0$. Hence, $(f_0^\prime,\vec{r}_0) \in D_\nu$, with a witness $h\in \Col(\lambda_0(\nu)^+,<\kappa_0)$. Define $(H_0^2)^\prime(\nu)=h$.

Now fix $\mu \in A_0^\prime$. Find $h^0(\mu) \leq (H^0_0)^p(\mu \restriction d_0^p)$ and $h^1(\mu) \leq H^1(\mu \restriction d_0^p)$ (if exist) such that there are $f\leq f_0^\prime$, $h \leq (H_0^2)^\prime(\mu(\kappa_0))$, and $\vec{r} \leq^* \vec{r}_0$,

\begin{center}
$\langle f^\prime \oplus \mu,\lambda_0(\mu),h^0(\mu),h^1(\mu),h)\rangle ^\frown \vec{r} \in D$.
\end{center}

Hence,

\begin{center}
$\langle f_0^\prime \oplus \mu,\lambda_0(\mu),h^0(\mu),h^1(\mu),(H_0^2)^\prime(\mu(\kappa_0))\rangle ^\frown \vec{r}_0 \in D$.
\end{center}
  
Otherwise, set $h^0(\mu)=(H_0^0)^p(\mu \restriction d_0^p)$, $h^1(\mu)=(H_0^2)^p(\mu \restriction d_0^p)$. Define $(H_0^0)^\prime(\mu)=h^0(\mu)$ and $(H_0^1)^\prime(\mu)=h^1(\mu)$. Shrink $A_0^\prime$ to $A_0^{\prime\prime}$ so that

\begin{enumerate}[label=(R\arabic*)]

\item \label{R1} EITHER for every $\mu \in A_0^{\prime\prime}$, there are $f^\prime \leq f_0^\prime$, $h \leq (H_0^2)^\prime(\mu(\kappa_0))$, and $\vec{r} \leq^* \vec{r}_0$ such that $\langle f^\prime \oplus \mu, \lambda_0(\mu),h^0(\mu),h^1(\mu),h) \rangle ^\frown \vec{r} \in D$,
\item \label{R2} OR for every $\mu \in A_0^{\prime\prime}$, for every $f^\prime \leq f_0^\prime$, $h \leq (H_0^2)^\prime(\mu(\kappa_0))$, and $\vec{r} \leq^* \vec{r}_0$ such that $\langle f^\prime \oplus \mu, \lambda_0(\mu),h^0(\mu),h^1(\mu),h) \rangle ^\frown \vec{r} \not \in D$.
\end{enumerate}

Finally, define $q_0= \langle f_0^\prime, A_0^{\prime\prime},(H_0^0)^\prime \restriction A_0^{\prime\prime},(H_0^1) \restriction A_0^{\prime\prime},(H_0^2)^\prime \restriction A_0^{\prime\prime}(\kappa_0) \rangle ^\frown \vec{r}_0$. Here is the property of $q^0$: if $q^\prime$ is an extension of $q^0$ with $\supp(q^\prime)=\{0\}$ and $q^\prime \in D$, then for every $\tau \in A_0^{q^0}$, $q^0+ \tau \in D$. 

\textbf{Construction of} $q^{m+1}$: Suppose $q^m$ is constructed. Define 

\begin{center}

    $\mathbb{Q}_{m+1}=\{(f,\vec{r}) \in (\mathcal{A}(\kappa_{m+1},\lambda)/f_{m+1}^{q^m}),\leq) \times (\mathbb{P}/q^m \setminus (m+2),\leq^*): \dom(f) \mbox{ is a subset of } \dom(f_{m+2}^r)\}$.
\end{center}

Here note that $f_{m+2}^r$ is the first Cohen part of $\vec{r}$. Fix a sufficiently large regular cardinal $\theta$. Build an elementary submodel $N_{m+1} \prec H_\theta$ of size $\kappa_{m+1}$ closed under $<\kappa_{m+1}$-sequences and containing enough information. Let $(f_{m+1}^\prime,\vec{r}_{m+1})$ be $(N_{m+1},\mathbb{Q}_{m+1})$-generic. Let $d_{m+1}^\prime=\dom(f_{m+1}^\prime)$ and $A_{m+1}^\prime \in E_{m+1}(d_{m+1}^\prime)$ be such that $A_{m+1}^\prime \subseteq A_{m+1}(d_{m+1}^\prime)$ and $A_{m+1}^\prime$ projects down to a subset of $A_{m+1}^{q^m}$. 

Let $\nu \in A_{m+1}^\prime(\kappa_{m+1})$. As usual, let $\{t_j,\mu_j,h^0_j,h^1_j\}_{j<\lambda_{m+1}(\nu)}$ be an enumeration in $N_{m+1}$ of the quadruples $t,\mu,h^0,h^1$ where $t \in \mathbb{P}_{\langle h_{m+1}^n(\nu):n\leq m \rangle}$, $\mu\in A_{m+1}^\prime$ with $\mu(\kappa_{m+1})=\nu$, $h^0 \in \Col(\kappa_m^+,<\nu)$, and $h^1 \in \Col(\nu,\rho_{m+1}(\nu)^+)$, respectively.

Define $D_\nu$ as the collection of $(g,\vec{r}) \in \mathbb{Q}_{m+1} $ such that there is an $h \in \Col(\lambda_{m+1}(\nu)^+,<\kappa_{m+1})$ with $h\leq (H_{m+1}^2)^{q^m}(\nu)$ meeting the following requirements:

\begin{enumerate}
    \item For all $j<\lambda_{m+1}(\nu)$, $\dom(\mu_j) \subset \dom(g)$.
    \item For all $j<\lambda_{m+1}(\nu)$,
    \begin{itemize}
        \item EITHER $t_j^\frown\langle g \oplus \mu_j,\lambda_{m+1}(\nu),h^0_j,h^1_j,h \rangle ^\frown \vec{r} \in D$,
        \item OR there is no $g^\prime \leq g$, $h^\prime \leq h$, and $\vec{r}{}^\prime \leq^* \vec{r}$, such that $\dom(g^\prime)$ is a subset of the Cohen part of $\vec{r}{}^\prime$, and the condition \\ $t_j^\frown \langle g^\prime \oplus \mu_j,\lambda_{m+1}(\nu),h^0_j,h^1_j,h^\prime \rangle^\frown \vec{r} \in D$.
    \end{itemize}
\end{enumerate}
The set $D_\nu$ is a dense, open subset of $\mathbb{Q}_{m+1}$, and $D_\nu \in N_{m+1}.$ Hence, we have $(f_{m+1}^\prime,\vec{r}_{m+1})\in D_\nu$ with a witness $h \in \Col(\lambda_{m+1}(\nu)^+,<\kappa_{m+1})$. Define $(H^2_{m+1})^\prime(\nu)=h$.

Now outside $N_{m+1}$, let $E_\mu$ be the collection of
$(t,h^0,h^1) \in \mathbb{P}_{\langle h_{m+1}^n(\nu) : n\leq m \rangle} \times \Col(\kappa_m^+,<\nu) \times \Col(\nu,\rho_{m+1}(\nu)^+)$ such that EITHER

\begin{center}
$t ^\frown \langle f_{m+1}^\prime \oplus \mu, \lambda_{m+1}(\nu),h^0,h^1,(H_{m+1}^2)^\prime(\nu)\rangle ^\frown \vec{r}_{m+1} \in D$,
\end{center}

OR for all $g \leq f_{m+1}^\prime \oplus \mu$, $h^2 \leq (H_{m+1}^2)^\prime(\mu)$, and $\vec{r}^{\prime} \leq^* \vec{r}_{m+1}$,
\begin{center}
$t ^\frown \langle f_{m+1}^\prime \oplus \mu, \lambda_{m+1}(\nu),h^0,h^1,(H_{m+1}^2)^\prime(\nu)\rangle ^\frown \vec{r}_{m+1} \not \in D$.
\end{center}
 
 We can use the property of $D_{\mu(\kappa_{m+1})}$ to show that $E_\mu$ is open dense. Use the induction hypothesis to find $t(\mu) \leq^* (q^m \restriction (m+1))_{\mu}$, $h^0(\mu) \leq (H_{m+1}^0)^{q^m}(\mu \restriction d_{m+1}^{q^m})$, and $h^1(\mu) \leq (H_{m+1}^1)^{q^m}(\mu \restriction d_{m+1}^{q^m})$ with the least $[OR]^{<\omega}$-element $\vec{\alpha}^\mu=\alpha^\mu_0< \dots <\alpha^\mu_{k(\mu)-1}$, in the lexicographic order, such that for every $\vec{\tau} \in \prod\limits_{\alpha \in \vec{\alpha}^\mu}A_\alpha^{t(\mu)}$, $h^0 \leq h^0(\mu)$, and $h^1 \leq h^1(\mu)$, we have $(t(\mu)+\vec{\tau},h^0,h^1) \in E_\mu$.

For each $\mu \in A_{m+1}^\prime$, define
$F_\mu:A_{\alpha_0^\mu}^{t(\mu)} \times \dots A_{\alpha_{k(\mu)-1}^\mu}^{t(\mu)} \to 2$ by $F_{\mu}(\tau_0, \dots, \tau_{k(\mu)-1})=1$ if and only if 

\begin{center}
$(t(\mu) + \langle \tau_0, \dots , \tau_{k(\mu)-1} \rangle)^\frown$\\  $ \langle f_{m+1}^\prime \oplus \mu ,\mu(\kappa_{m+1}),h^0(\mu),h^1(\mu), (H_{m+1}^2)^\prime(\mu(\kappa_{m+1})) \rangle ^\frown \vec{r}_{m+1} \in D$.
\end{center}

By Lemma \ref{rowbottom}, we have a measure one set $B_{\alpha^\mu_i}^{t(\mu)} \subseteq A_{\alpha_i^\mu}^{t(\mu)}$ for all $i<k(\mu)$ such that
$F \restriction B_{\alpha_0^\mu}^{t(\mu)} \times \dots B_{\alpha_{k(\mu)-1}^\mu}^{t(\mu)}$ is constant. Shrink the measure one sets $A_{\alpha_0^\mu}^{t(\mu)}, \dots, A_{\alpha_{k(\mu)-1}^\mu}^{t(\mu)}$ inside $t(\mu)$ to $B_{\alpha_0^\mu}^{t(\mu)}, \dots, B_{\alpha_{k(\mu)-1}^\mu}^{t(\mu)}$, respectively, that $F_\mu$ is constant on the product of those measure one sets. Restrict the collapses based on the measure one sets we just shrank. Call the resulting condition $t^*(\mu)$. By the shrinking of measure one sets in $t(\mu)$, we arranged that

\begin{enumerate}[label=(S\arabic*)]

\item \label{S1} EITHER $t^*(\mu) + \langle \tau_0, \dots, \tau_{k(\mu)-1} \rangle^\frown \langle f_{m+1}^\prime \oplus \mu,h^0(\mu),h^1(\mu),(H_{m+1}^2)^\prime(\mu(\kappa_{m+1}) \rangle^\frown \vec{r}_{m+1} \in D$ for all $\vec{\tau}$, 
\item \label{S2} OR for all $\tau$, there are no $g \leq f_{m+1}^\prime \oplus \mu$, $h^2 \leq (H_{m+1}^2)^\prime(\mu(\kappa_{m+1}))$, and $\vec{r}^\prime \leq^* \vec{r}_{m+1}$ such that $t^*(\mu) + \langle \tau_0, \dots, \tau_{k(\mu)-1} \rangle\frown \langle g,h^0(\mu),h^1(\mu),h^2 \rangle^\frown \vec{r}^\prime \in D$.
\end{enumerate}

Shrink $A_{m+1}^\prime$ further so that every $\mu$ satisfies \ref{S1}, or every $\mu$ satisfies \ref{S2}. If every $\mu$ satisfies \ref{S1}, shrink further so that there is a sequence $\vec{\alpha}_{m+1}$ such that for every $\mu \in A_{m+1}^\prime$, $\vec{\alpha}^\mu=\vec{\alpha}_{m+1}$.

Observe that $t^*(\mu) \leq^* (q^m \restriction (m+1))_\mu$, $h^0(\mu) \leq (H_{m+1}^0)^{q^m}(\mu \restriction d_{m+1}^{q^m})$, and $h^1(\mu) \leq (H_{m+1}^1)^{q^m}(\mu \restriction d_{m+1}^{q^m})$. Use Lemma \ref{integrate} to integrate these components together to form a condition $q^{m+1}$. Hence, for $\tau \in A_{m+1}^{q^{m+1}}$ with $\mu=\tau \restriction d_{m+1}^{q^m}$, and $\nu=\mu(\kappa_{m+1})=\tau(\kappa_{m+1})$, we have $(q^{m+1}+\tau) \restriction (m+1) \leq^* t^*(\mu)$,  for $n\leq m$, $f_n^{q+\tau}=f_n^{t^*(\mu)}$, $(H_{m+1}^0)^{q^{m+1}}(\tau)=h^0(\mu), (H_{m+1}^1)^{q^{m+1}}(\tau)=h^1(\mu)$, and $(H_{m+1}^2)^{q^{m+1}}(\nu)=h^2(\nu)$. This completes the construction of $q^{m+1}$.
Here is what we have: if $q^\prime$ is an extension of $q^{m+1}$ such that $\supp(q^\prime)$ is the least in the lexicographic order in $[OR]^{<\omega}$, $\max(\supp(q^\prime))=m+1$, and $q^\prime \in D$, then every extension $q^{\prime\prime}$ of $q^{m+1}$ with $\supp(q^{\prime\prime})=\supp(q^\prime)$ is in $D$. 
Now we have $q \leq^* q^m$ for all $m$.

\begin{claim} 
$q$ satisfies the strong Prikry property.
\end{claim}

\begin{claimproof}(sketch) Let $q^\prime \leq q$ with $q^\prime \in D$. Assume $q^\prime$ is not pure with the least $\supp(q^\prime)$ in the lexicographic order in $[OR]^{< \omega}$. Enumerate $\supp(q^\prime)$ in increasing order as $\alpha_0<\dots < \alpha_{k-1}$. If $\alpha_{k-1}=0$, then the proof is easy. Assume $\alpha_{k-1}=m+1$. Using the notations from the construction of $q^{m+1}$, we have that for every $\tau \in A^{q^\prime}_{m+1}$, $\tau \restriction d_{m+1}^\prime$ satisfies the property \ref{S1}, and $\vec{\alpha}_{m+1}=\langle \alpha_0, \dots,\alpha_{k-1} \rangle$. By the way we shrank $A_{m+1}^\prime$, for every $\vec{\tau} \in \prod\limits_{\alpha \in \vec{\alpha}_{m+1}} A_\alpha^q$, we have $q+ \vec{\tau} \in D$.
\end{claimproof}

\end{proof}

\section{cardinal preservation} \label{cardinalpreserve}

This section is dedicated to showing some cardinal preservation results. We analyze the generic extension. From now, assume the length of the extender sequence is limit $\eta$. Let $\eta^+=\aleph_{\gamma_\eta}$. Also, assume $\rho=\bar{\kappa}_\eta$ and $\overline{\kappa}_0=\max\{\omega,\eta\}$. Let $G$ be $\mathbb{P}$-generic over $V$.

\begin{lemma}\label{preserve}
For each $\alpha<\eta$, the cardinal $\kappa_\alpha$ is preserved in $V[G]$, $(2^{\kappa_\alpha})^{V[G]}\leq \kappa_{\alpha+1}$. As a consequence, for any limit ordinal $\beta \leq \eta$, $\overline{\kappa}_\beta$ is a singular strong limit cardinal in $V[G]$, and $\overline{\kappa}_\beta=\aleph_{\gamma_{\eta}+\beta}^{V[G]}$.
\end{lemma}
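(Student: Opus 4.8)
The plan is to exploit the two structural facts already established for $\mathbb{P}$: the factorization $\mathbb{P}/p \cong (\mathbb{P}/p)\restriction\alpha \times (\mathbb{P}/p)\setminus\alpha$ available whenever $\alpha\in\supp(p)$, together with the $\overline{\kappa}_\alpha^+$-closure of $((\mathbb{P}/p)\setminus\alpha,\leq^*)$ and the Prikry property. Fix $\alpha<\eta$ and pick $p\in G$ with $\alpha\in\supp(p)$. Writing $L=(\mathbb{P}/p)\restriction\alpha$ and $U=(\mathbb{P}/p)\setminus\alpha$, I would first check $|L|<\kappa_\alpha$: the Cohen parts of $L$ have domains of size $\kappa_\beta$ inside $\lambda_\alpha<\kappa_\alpha$, the measure one sets are bounded in size by Lemma \ref{sparseuf}, the collapse parts live below $\kappa_\alpha$, and GCH together with the inaccessibility of $\kappa_\alpha$ keeps the product over the $<\kappa_\alpha$-many coordinates below $\kappa_\alpha$; hence $L$ is $\kappa_\alpha$-c.c. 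In $U$ the coordinate $\alpha$ is a product of $\mathcal{A}(\kappa_\alpha,\cdot)$ (which is $\kappa_\alpha$-closed) with L\'evy collapses confined to $[\overline{\kappa}_\alpha,\kappa_\alpha]$ (whose top factor is $\kappa_\alpha$-c.c.), while the tail $(\mathbb{P}/p)\setminus(\alpha+1)$ is $\overline{\kappa}_{\alpha+1}^+=\kappa_\alpha^+$-closed under $\leq^*$. Forcing first with $U$ and then with $L$ (which remains of size $<\kappa_\alpha$), I conclude $\kappa_\alpha$ is preserved.

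For $(2^{\kappa_\alpha})^{V[G]}\leq\kappa_{\alpha+1}$ I would choose $p\in G$ with both $\alpha,\alpha+1\in\supp(p)$ (this is dense), so that $\alpha^*=\alpha+1$ and $f_\alpha\in\mathcal{A}(\kappa_\alpha,\lambda_{\alpha+1})$. Factoring at $\alpha+1$ gives $\mathbb{P}/p\cong L'\times U'$ with $U'=(\mathbb{P}/p)\setminus(\alpha+1)$. The key step is that $U'$ adds no new subset of $\kappa_\alpha=\overline{\kappa}_{\alpha+1}$: given a $U'$-name $\dot x\subseteq\kappa_\alpha$ one builds, using the Prikry property to decide each statement ``$\check\xi\in\dot x$'' and the $\kappa_\alpha^+$-closure of $\leq^*$ to take lower bounds at limits, a $\leq^*$-decreasing sequence of length $\kappa_\alpha$ whose lower bound decides all of $\dot x$, whence $\dot x$ is forced equal to a ground model set. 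Thus $\mathcal{P}(\kappa_\alpha)^{V[G]}=\mathcal{P}(\kappa_\alpha)^{V^{U'}[L']}$ with $|L'|\leq\lambda_{\alpha+1}$. Working in $V^{U'}$, where the only subsets of $\lambda_{\alpha+1}$ added by $U'$ come from a forcing of size $<\lambda_{\alpha+1}$ (the two lower collapses at coordinate $\alpha+1$), so that $2^{\lambda_{\alpha+1}}=\lambda_{\alpha+1}^+$ there, a count of nice $L'$-names for subsets of $\kappa_\alpha$ gives $2^{\kappa_\alpha}\leq\lambda_{\alpha+1}^+<\kappa_{\alpha+1}$.

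For the consequences, fix a limit $\beta\leq\eta$. Singularity and the strong limit property are immediate from the first two clauses: $\overline{\kappa}_\beta=\sup_{\alpha<\beta}\kappa_\alpha$ is a limit of preserved cardinals, hence a cardinal, of cofinality $\cf^{V[G]}(\beta)\leq\eta<\kappa_0\leq\overline{\kappa}_\beta$; and if $\theta<\overline{\kappa}_\beta$ then $\theta<\kappa_\alpha$ for some $\alpha$ with $\alpha+1<\beta$, so $2^\theta\leq 2^{\kappa_\alpha}<\kappa_{\alpha+1}<\overline{\kappa}_\beta$. For the $\aleph$-value I would first read off, from the same factorizations, the exact block structure: $G$ realizes at each coordinate $\alpha$ honest L\'evy-collapse generics confined to $[\overline{\kappa}_\alpha,\kappa_\alpha]$, collapsing precisely the intervals $(\overline{\kappa}_\alpha^+,f_\alpha(\kappa_\alpha))$, $(f_\alpha(\kappa_\alpha),\rho_\alpha^+]$ and $(\lambda_\alpha^+,\kappa_\alpha)$, so that the surviving $V[G]$-cardinals in this block are exactly $\overline{\kappa}_\alpha,\overline{\kappa}_\alpha^+,f_\alpha(\kappa_\alpha),\lambda_\alpha,\lambda_\alpha^+,\kappa_\alpha$; in particular $\kappa_\alpha=\overline{\kappa}_\alpha^{+5}$ in $V[G]$. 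Writing $\overline{\kappa}_\alpha=\aleph_{\delta_\alpha}^{V[G]}$, this yields $\delta_{\alpha+1}=\delta_\alpha+5$, while $\delta_\beta=\sup_{\alpha<\beta}\delta_\alpha$ at limits because $\aleph$ is continuous and the added constant $5$ is absorbed in the supremum. Since $\eta^+=\aleph_{\gamma_\eta}$ gives $\delta_0+1=\gamma_\eta$, a short transfinite induction using $1+\beta=\beta$ for infinite $\beta$ shows $\delta_\beta=\gamma_\eta+\beta$ for every limit $\beta\leq\eta$, which is the claim.

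The main obstacle I anticipate is pinning down the block structure precisely: one must verify that $G$ induces genuine L\'evy-collapse generics at each coordinate and, crucially, that the collapses and Cohen forcings attached to distinct coordinates do not interact so as to collapse (or spare) extra cardinals in a neighbouring block. This separation is exactly what the factorization and the closure of the upper part are meant to buy us, but making it airtight --- and handling the one delicate cardinal-arithmetic computation of $2^{\lambda_{\alpha+1}}$ in the intermediate model $V^{U'}$ --- is where the real work lies; the ordinal arithmetic in the final induction is then routine.
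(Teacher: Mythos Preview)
Your proposal is correct and follows essentially the same approach as the paper: factor $\mathbb{P}/p$ using points in $\supp(p)$, use the small chain condition of the lower factor, the Prikry property plus $\leq^*$-closure of the upper factor to add no new $\kappa_\alpha$-sequences, and then read off the block structure of surviving cardinals. The only cosmetic difference is that the paper takes $p$ with both $\alpha,\alpha+1\in\supp(p)$ from the outset and splits into three pieces $G_0,G_1,G_2$ at once, whereas you factor first at $\alpha$ and then again at $\alpha+1$; your explicit transfinite induction on $\delta_\alpha$ is also more detailed than the paper's one-line ``hence we are done,'' and the paper is slightly more cautious than you about whether $\nu_\alpha$ survives at limit $\alpha$, but as you correctly observe this finite uncertainty is absorbed in the supremum and does not affect $\overline{\kappa}_\beta=\aleph_{\gamma_\eta+\beta}$.
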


\begin{proof}
 Let $\alpha<\eta$. By a density argument, we find $p \in G$ with $\alpha,\alpha+1 \in \supp(p)$. Then $G/p$ is factored into $G_0:=G/p \restriction \mathbb{P}_{\langle h_\alpha^\gamma(f_\alpha^p(\kappa_\alpha)) : \gamma < \alpha \rangle}$, $G_1:=G/p \restriction \{\alpha\}$ and $G_2:=G/p \restriction (\mathbb{P} \setminus (\alpha+1))$, for some $\lambda_\alpha<\kappa_\alpha$. $G_0$ comes from a
  forcing that has $\lambda_\alpha$-c.c. Thus, $\kappa_\alpha$ is preserved in the extension by
  $G_0$. $G_1$ comes from a
  forcing which is equivalent to $\mathcal{A}(\kappa_\alpha,\lambda_{\alpha+1}) \times \Col(\bar{\kappa}_\alpha,<\nu) \times \Col(\nu,s_\alpha(\nu)^+) \times \Col(s_\alpha(\nu)^{+3},<\kappa_{\alpha})$ for some inaccessible $\nu$. We see that $\kappa_\alpha$ is still preserved, and in
  the extension by $G_0$ and $G_1$, $2^{\kappa_\alpha} \leq 2^{\kappa_\alpha^+}= \lambda_{\alpha+1}<\kappa_{\alpha+1}$. 
  Finally, $G_2$ comes from a forcing whose $\leq^*$ is $\kappa_\alpha^+$-closed. By the Prikry property, it does not add new $\kappa_\alpha$-sequences. Hence, in $V[G]$, $\kappa_\alpha$ is preserved, and $(2^{\kappa_\alpha})^{V[G]} \leq \kappa_{\alpha+1}$.
  
  By our analysis, if $\alpha$ is $0$ or a successor ordinal, then on each interval $(\overline{\kappa}_\alpha,\kappa_\alpha]$, there is a $\nu_\alpha \in (\overline{\kappa}_\alpha,\kappa_\alpha)$ such that $\overline{\kappa}_\alpha^+$,  $\nu_\alpha$, $s_\alpha(\nu_\alpha)^{++}$,$s_\alpha(\nu_\alpha)^{+3}$,
  and $\kappa_\alpha$ are preserved (see Lemma \ref{limitcard}), and $\nu_0$ becomes $\eta^+$.
   If $\alpha<\eta$ is limit,  then  since $\mathbb{P}_{\langle h_\alpha^\beta(\nu):\beta<\alpha \rangle}$, for some $\nu$, is $\lambda_\alpha$-c.c., we have that in the interval $(\overline{\kappa}_\alpha,\kappa_\alpha]$, $\lambda_\alpha,\lambda_\alpha^+,\kappa_\alpha$ are preserved, while the rest but $\overline{\kappa}_\alpha^+,\nu_\alpha$ are collapsed (where $s_\alpha(\nu_\alpha)^{++}=\lambda_\alpha$), hence, we are done.
\end{proof}

We know for each limit $\beta \leq \eta$, $\bar{\kappa}_\beta$ is preserved. $(\bar{\kappa}_\beta^+)^V$ is also preserved by
this forcing:

\begin{lemma}\label{limitcard}
For any limit ordinal $\beta \leq \eta, \bar{\kappa}_\beta^+{}^V$ is preserved in $V[G]$.
\end{lemma}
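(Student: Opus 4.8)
The plan is to show that if $(\overline{\kappa}_\beta^+)^V$ were collapsed in $V[G]$, it would acquire a short cofinal sequence which is captured by a \emph{small} factor of the forcing, contradicting that this factor preserves it. First, by Lemma \ref{preserve}, $\overline{\kappa}_\beta$ is a (singular strong limit) cardinal in $V[G]$ with $\cf^{V[G]}(\overline{\kappa}_\beta)=\cf(\beta)<\kappa_0\le\overline{\kappa}_\beta$. Hence, if $(\overline{\kappa}_\beta^+)^V$ fails to be a cardinal in $V[G]$, then $|(\overline{\kappa}_\beta^+)^V|^{V[G]}\le\overline{\kappa}_\beta$, so its $V[G]$-cofinality is some regular $\mu\le\overline{\kappa}_\beta$; since $\overline{\kappa}_\beta$ is singular in $V[G]$, necessarily $\mu<\overline{\kappa}_\beta$. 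Thus it suffices to rule out a cofinal map $c\colon\mu\to(\overline{\kappa}_\beta^+)^V$ lying in $V[G]$ with $\mu<\overline{\kappa}_\beta$.

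Choose $\gamma<\beta$ (or, when $\beta=\eta$, any $\gamma<\eta$) with $\mu<\overline{\kappa}_\gamma$; this is possible since $\overline{\kappa}_\gamma\to\overline{\kappa}_\beta>\mu$. By a density argument fix $p\in G$ with $\gamma\in\supp(p)$ and factor $\mathbb{P}/p\cong\mathbb{P}_{<\gamma}\times\mathbb{P}_{\ge\gamma}$, writing $G=G_{<\gamma}\times G_{\ge\gamma}$, where $\mathbb{P}_{<\gamma}=(\mathbb{P}/p)\restriction\gamma$ is the length-$\gamma$ forcing built from $\langle E_\delta\restriction\lambda_\gamma:\delta<\gamma\rangle$ and $\mathbb{P}_{\ge\gamma}=(\mathbb{P}/p)\setminus\gamma$. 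A GCH computation shows $|\mathbb{P}_{<\gamma}|<\kappa_\gamma$: every Cohen domain is a subset of $\lambda_\gamma<\kappa_\gamma$ of size $<\overline{\kappa}_\gamma$, there are at most $2^{\lambda_\gamma}=\lambda_\gamma^+$ measure-one sets and collapse functions at each coordinate, and $\kappa_\gamma$ is inaccessible. Hence $\mathbb{P}_{<\gamma}$ is a small forcing, so it preserves every cardinal $\ge\kappa_\gamma$; in particular $(\overline{\kappa}_\beta^+)^V$ is still a regular cardinal in $V[G_{<\gamma}]$, as $\overline{\kappa}_\beta^+>\overline{\kappa}_\beta\ge\kappa_\gamma$.

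The heart of the proof is then to show that $\mathbb{P}_{\ge\gamma}$ adds no new $\mu$-sequence of ordinals over $V[G_{<\gamma}]$; granting this, $c\in V[G_{<\gamma}]$, contradicting the regularity of $(\overline{\kappa}_\beta^+)^V$ there. Over $V$ this is immediate: $(\mathbb{P}_{\ge\gamma},\le^*)$ is $\overline{\kappa}_\gamma^+$-closed by the closure lemma with $\alpha=\gamma$, and together with the Prikry property (Section \ref{prikry}) it adds no new sequences of length $\le\overline{\kappa}_\gamma$, hence none of length $\mu<\overline{\kappa}_\gamma$. The obstacle — which I expect to be the genuine work — is transferring both facts to $V[G_{<\gamma}]$. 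For closure I would use that $\mathbb{P}_{<\gamma}$ is $\kappa_\gamma$-c.c. while each $E_\delta(d)$ for $\delta\ge\gamma$ is $\kappa_\delta$-complete with $\kappa_\delta\ge\kappa_\gamma>|\mathbb{P}_{<\gamma}|$: a nice-name argument shows that any $V[G_{<\gamma}]$-sequence of $E_\delta(d)$-measure-one sets of length $<\kappa_\gamma$ has, coordinatewise, at most $|\mathbb{P}_{<\gamma}|<\kappa_\gamma$ possible $V$-values, whose intersection is a $V$-measure-one set contained in the realized intersection; and the collapse and Cohen coordinates retain their closure because $\mathbb{P}_{<\gamma}$ preserves the relevant cardinals. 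Thus $(\mathbb{P}_{\ge\gamma},\le^*)$ remains $\overline{\kappa}_\gamma^+$-closed in $V[G_{<\gamma}]$. For the Prikry property over $V[G_{<\gamma}]$ I would invoke that $\mathbb{P}_{<\gamma}$ is small relative to every $\kappa_\delta$ with $\delta\ge\gamma$: by Lévy–Solovay the extenders $E_\delta$ generate extenders in $V[G_{<\gamma}]$ with the same critical points and completeness, so the density arguments and extender analysis of Section \ref{prikry} relativize verbatim. Combining the two gives that $\mathbb{P}_{\ge\gamma}$ adds no $\mu$-sequence of ordinals over $V[G_{<\gamma}]$, which completes the proof; the reduction to a short cofinal map and the smallness estimate for $\mathbb{P}_{<\gamma}$ are routine by comparison.
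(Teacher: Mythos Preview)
Your approach is correct in outline but takes a genuinely different route from the paper. You factor at $\gamma$, pass to $V[G_{<\gamma}]$, and then argue that the upper factor $\mathbb{P}_{\ge\gamma}$ still has the Prikry property and $\overline{\kappa}_\gamma^+$-closed $\le^*$ there, via a L\'evy--Solovay transfer. The paper avoids this relativization entirely. Instead, after factoring at some $\alpha$ with $\xi<\kappa_\alpha$ (where $\xi=\cf^{V[G]}(\overline{\kappa}_\beta^+)$), it argues that the collapse must already occur in the extension by $\mathbb{P}\setminus(\alpha+1)$ alone over $V$: the lower factors have $\kappa_\alpha$-c.c., and since $\mathbb{P}\setminus(\alpha+1)$ adds no $\kappa_\alpha$-sequences, that chain condition persists in the upper extension, so the lower factors cannot be responsible. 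Then, working entirely in $V$ with $\mathbb{P}\setminus(\alpha+1)$-names $\dot{\gamma}_i$ for the cofinal $\xi$-sequence, the paper uses the \emph{strong} Prikry property (Lemma~\ref{strongprikry}) rather than the plain one: it builds a $\le^*$-decreasing chain of length $\xi$ so that below the resulting lower bound $r$, each $\dot{\gamma}_i$ is decided by adding objects from a fixed finite set $I_i\subseteq\eta$ of coordinates. This bounds the set of possible values of $\dot{\gamma}_i$ below $r$ by some $\kappa_{\alpha_i}^+<\overline{\kappa}_\eta$, hence $\sup_i\sup A_i<(\overline{\kappa}_\beta^+)^V$, contradicting cofinality.

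The trade-off: the paper's route is shorter and uses only facts already established in $V$, at the cost of needing the strong Prikry lemma rather than just Prikry plus closure. Your route is conceptually clean (``small forcing below, Prikry-closed above'') but the transfer you flag as ``the genuine work'' is indeed nontrivial---checking that coherence, the integration lemma, and the inductive Prikry argument all relativize is more labor than the direct counting bound the paper gives.
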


\begin{proof}

We only show the case $\beta = \eta$ here. The case $\beta<\eta$ is similar, together with the fact that $(\mathbb{P} \setminus \beta, \leq^*)$ is $\bar{\kappa}_\beta^+$-closed.

Suppose not. Then in $V[G]$, let $\xi=\cf{\overline{\kappa}_\eta^+}<\overline{\kappa}_\eta$. Choose $\alpha<\eta$ such that $\xi<\kappa_\alpha$. Extend $p$ so that $\alpha \in \supp(p)$. Break $p$ into $p \restriction \alpha$, $p(\alpha)$, and $p \setminus (\alpha+1)$. Since $p \restriction \alpha$ and  the Collapse parts in $p(\alpha)$ come from forcings which have $\kappa_\alpha$-.c.c., and the Cohen part of $p(\alpha)$ comes from a forcing which is $\kappa_\alpha^+$-closed. $\overline{\kappa}_\eta$ is collapsed in the forcing in which $p \setminus (\alpha+1)$ lives (which is $\mathbb{P} \setminus (\alpha+1)$).

In $V$, let $\{\dot{\gamma}_i:i<\xi\}$ be a sequence of names, forced by $ p \setminus (\alpha+1) \in \mathbb{P} \setminus (\alpha+1)$, to be a cofinal sequence in $\bar{\kappa}_\eta^+{}^V$. Build a sequence of conditions
$\{p_i : i<\xi \}$ such that $p_0=p \setminus (\alpha+1)$, $\{p_i: i<\xi \}$ is $\leq^*$-decreasing, and $p_{i+1}$ satisfies Lemma \ref{strongprikry} for $D_i=\{q \in \mathbb{P \setminus \alpha}: q$ decides the value of $\dot{\gamma}_i \}$. 

Set $r$ to be a $\leq^*$- lower bound of $\{p_i :i<\xi\}$ in $\mathbb{P} \setminus (\alpha+1)$. By Lemma \ref{strongprikry}, for each $i<\xi$, $A_i=\{\beta : \exists r^\prime \leq r, r^\prime \Vdash \dot{\gamma}_i=\check{\beta}\}$ has size at most $\kappa_{\alpha_i}$ for some $\alpha_i<\eta$. Set $\beta_i=\sup A_i$, and $\beta=\sup\limits_{i<\xi} \beta_i$. Then $r \Vdash \sup\{ \dot{\gamma}_i:i<\xi\} \leq \check{\beta}$ and $\beta<(\bar{\kappa}_\eta^+)^V$, which is a contradiction.
\end{proof}

From the series of lemmas above and some chain condition arguments,
in the generic extension $V[G]$, we get that for each limit ordinal $\beta < \eta$, $\overline{\kappa}_\beta$, $(\overline{\kappa}_\beta^+)^V$ are preserved (where $\lambda_\beta(\nu_\beta)=\lambda_\beta^p$ for some $p \in G$ with $\beta \in \supp(p)$), and $\overline{\kappa}_\eta$, $(\overline{\kappa}_\eta^+)^V$, and $(\overline{\kappa}_\eta^{++})^V$ are preserved. Furthermore, for limit $\beta \leq \eta$, $\overline{\kappa}_\beta=\aleph_{\gamma_\eta+\beta}^{V[G]}$, $(\overline{\kappa}_\beta^+)^V=(\aleph_{\gamma_\eta+\beta+1})^{V[G]}$, and if $\beta<\eta$ is limit, then $\lambda_\beta=(\aleph_{\gamma_\eta+\beta+k})^{V[G]}$, where $k=2$ or $3$, and $\lambda_\beta=\lambda_\beta^p$ for some $p \in G$ with $\beta \in \supp(p)$. Finally, $\lambda$ becomes $\aleph_{\gamma_\eta+\eta+2}^{V[G]}$. The main reason why the value $\lambda_\beta$ for limit $\beta<\eta$ is unclear is because we do not know exactly if $\nu_\beta$ is preserved, where $s_\beta(\nu_\beta)^{++}=\lambda_\beta$.

From this point, for limit $\beta<\eta$, let $\lambda_\beta=\lambda_\beta^p$ for some $p \in V[G]$ with $\beta \in \supp(p)$, and $\lambda_\eta=\lambda$. Next, we are going to verify that in $V[G]$, for limit $\beta \leq \eta$, $2^{\overline{\kappa}_\beta}=\lambda_\beta$. On one hand, $2^{\overline{\kappa}_\beta} \leq \lambda_\beta$ by a chain condition argument.

To show $\lambda_\beta \leq 2^{\overline{\kappa}_\beta}$, we will build a scale of length $\lambda_\beta$. We analyse the scales in the next section.

\section{scale analysis}\label{scaleanalysis}

The goal of this section is to build a scale $t_\gamma: \gamma \in [\overline{\kappa}_\eta,\lambda)$ on $\overline{\kappa}_\eta$, and obtain $2^{\overline{\kappa}_\eta}=\lambda$ as desired.
We also show that the situation ``reflects down", namely for $\beta<\eta$ limit, we will have a scale on $\overline{\kappa}_\beta$ of length $\lambda_\beta$ when $\lambda_\beta=\lambda_\beta^p$ for some $p \in G$ with $\beta \in \supp(p)$.
The initial segment of the scales mentioned above of lengths successors of singular cardinals will also form very good scales.

\begin{lemma}\label{scaledef}
  Let $\beta \leq \eta$ be a limit ordinal, let $q$ be a condition such that $\beta \in \supp(q)$ if $\beta<\eta$, and let $\lambda_\beta=\lambda$ if $\beta=\eta$ and $\lambda_\beta=\lambda_\beta^q$ if $\beta<\eta$. Let $\gamma \in [\overline{\kappa}_\beta,\lambda_\beta)$ and $\alpha<\beta$. Let $D$ be the collection of $p \leq q$ such that 
\begin{enumerate}
\item $\alpha \in \supp(p)$.
\item If we enumerate $\supp(p) \cap (\beta+1) \setminus \alpha$ in decreasing order as
    $\alpha_0> \dots >\alpha_{k-1}=\alpha$ such that if $\beta<\eta$, $\alpha_0=\beta$, then $\gamma \in \dom(f^p_{\alpha_0})$, the sequence of ordinals defined inductively by setting $\gamma_0 = \gamma$ and $\gamma_{i+1} = f^p_{\alpha_i}(\gamma_i)$ for as long as $\gamma_i \in \dom(f^p_{\alpha_i})$
    reaches a stage where $\gamma_{k-1}$ is defined, and $\gamma_{k-1} \in \dom(f^p_\alpha)$. 
  \end{enumerate}  
Then $D$ is open dense below $q$.
\end{lemma}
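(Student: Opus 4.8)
The plan is to verify the two halves of the statement—openness and density—separately, with openness being the delicate one. Throughout I write the \emph{trace} of $\gamma$ through a condition $p$ whose support points in $[\alpha,\beta]$ are $\alpha_0 > \dots > \alpha_{k-1} = \alpha$ as the sequence $\gamma_0 = \gamma$, $\gamma_{i+1} = f^p_{\alpha_i}(\gamma_i)$, and say that $p$ \emph{realizes} $\gamma$ if this sequence is defined through a stage $\gamma_{k-1}$ with $\gamma_{k-1} \in \dom(f^p_\alpha)$; thus $D = \{p \le q : \alpha \in \supp(p) \text{ and } p \text{ realizes } \gamma\}$, and I must show this is open dense below $q$.

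For density, given $p \le q$ I would first perform a one-step extension to arrange $\alpha \in \supp(p)$, and then set up the trace by a finite downward recursion on the support points $\alpha_0 > \dots > \alpha_{k-1}=\alpha$ lying in $[\alpha,\beta]$. At the top I add $\gamma$ to $\dom(f_{\alpha_0})$ and set $f_{\alpha_0}(\gamma)=\gamma_1$ for a freshly chosen ordinal $\gamma_1 < \lambda^p_{\alpha_0}$, then add $\gamma_1$ to $\dom(f_{\alpha_1})$, and repeat, finally arranging $\gamma_{k-1}\in\dom(f_\alpha)$. Whenever I enlarge a domain $d_{\alpha_{i+1}}$ I must also enlarge every $d_{\alpha'}$ with $\alpha'\ge\alpha_{i+1}$ to preserve clause (\ref{domin}) of Definition~\ref{poset}, and lift the pure measure-one sets and collapse functions along the projections, which is routine. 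The only arithmetic point is that at each stage there is room to pick $\gamma_{i+1}$ fresh below $\lambda^p_{\alpha_i}$ and inside the allowable domain: by clause (\ref{reflect}), $\lambda^p_{\alpha_i}=\rho_{\alpha_i}^{++}$ is a cardinal with $|d_{\alpha_{i+1}}|=\kappa_{\alpha_{i+1}}\le\overline{\kappa}_{\alpha_i}<\rho_{\alpha_i}<\lambda^p_{\alpha_i}$, so fresh ordinals are plentiful. Each step is a direct extension, so the resulting $p'$ satisfies $p'\le p$, realizes $\gamma$, and has $\alpha\in\supp(p')$, i.e.\ $p'\in D$.

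For openness, suppose $p\in D$ and $p'\le p$; writing $p'\le^* p+\vec{\mu}$, it suffices by induction to treat a single direct extension and a single one-step extension $p\mapsto p+\mu$ at a coordinate $\delta$ (as in Definition~\ref{onestep}). A direct extension keeps the support fixed and only enlarges the Cohen parts of support coordinates, so the trace values are literally unchanged and membership in $D$ is preserved. For a one-step extension the key identity is that squishing replaces $f_{\beta'}$ by $f_{\beta'}\circ\mu^{-1}$, while the second addability condition of Definition~\ref{addability} gives $\dom(\mu)\supseteq\bigcup_{\beta'<\delta}d^p_{\beta'}$; hence every trace value $\gamma_i$ that feeds into level $\delta$ lies in $\dom(\mu)$, and $f_{\beta'}\circ\mu^{-1}$ evaluated at $\mu(\gamma_i)$ returns $f^p_{\beta'}(\gamma_i)$, the old value. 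Thus inserting $\delta$ simply splices an extra step $\gamma_i\mapsto\mu(\gamma_i)\mapsto\gamma_{i+1}$ into the trace and leaves the remaining values intact, so $p+\mu$ still realizes $\gamma$; composing finitely many such steps and one final direct extension gives $p'\in D$.

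The main obstacle is exactly the bookkeeping in this last step when the one-step extension reindexes the relevant support points—most notably when $\beta=\eta$ and $\delta$ is added above the current largest support point $\alpha_0^{\mathrm{old}}=\max(\supp(p))$, so that the top of the trace moves up to $\delta$. Here one must check both that $\gamma$ enters $\dom(f^{p+\mu}_\delta)$, which follows from the $\subseteq$-increasing domains since $\gamma\in d^p_{\alpha_0^{\mathrm{old}}}\subseteq d^p_\delta$, and that the new value $\mu(\gamma)$ lands in the domain of the former top coordinate so that the trace resumes at $f^p_{\alpha_0^{\mathrm{old}}}(\gamma)$. This is where one uses that a one-step extension squishes the Cohen part of every coordinate below $\delta$ down to the previous support point, together with the composition identity and addability; once this is verified the trace is seen to survive every one-step extension, completing openness and hence the proof.
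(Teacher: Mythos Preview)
The paper states this lemma without proof, so there is no argument to compare against; your proposal is essentially what a full proof would have to supply.

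Your openness argument is correct and is the heart of the matter: the composition identity $(f^p_{\beta'}\circ\mu^{-1})(\mu(\gamma_i)) = f^p_{\beta'}(\gamma_i)$, together with the addability clause $\bigcup_{\beta'<\delta} d^p_{\beta'} \subseteq \dom(\mu)$, shows that a one-step extension merely splices a new step into the trace without disturbing the later values.  This is exactly the calculation the paper performs immediately after the lemma when checking that $t_\gamma$ is well-defined.

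Your density argument has a gap.  You write ``add $\gamma$ to $\dom(f_{\alpha_0})$ and set $f_{\alpha_0}(\gamma)=\gamma_1$ for a freshly chosen ordinal $\gamma_1<\lambda^p_{\alpha_0}$'', but $\gamma$ may already lie in $\dom(f^p_{\alpha_0})$ --- indeed when $\beta<\eta$ one has $\gamma<\lambda_\beta<\kappa_\beta$, so $\gamma\in\kappa_\beta+1\subseteq d^p_\beta$ automatically --- and then $\gamma_1=f^p_{\alpha_0}(\gamma)$ is fixed.  Nothing in Definition~\ref{poset} forces $\gamma_1<\lambda^p_{\alpha_0}$: the Cohen part $f_{\alpha_0}$ takes values in $\kappa_{\alpha_0}$, and $\lambda^p_{\alpha_0}<\kappa_{\alpha_0}$.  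If $\gamma_1\in[\lambda^p_{\alpha_0},\kappa_{\alpha_0})$ then $\gamma_1$ cannot be inserted into $\dom(f_{\alpha_1})\subseteq\lambda^p_{\alpha_0}$, and your own openness analysis shows this value survives every further extension (one-step extensions above $\alpha_0$ squish and then reproduce the same $\gamma_1$; those below $\alpha_0$ leave it untouched).  So literally below such a $p$ the set $D$ is empty.  This looks like a minor imprecision in the paper's formulation rather than a defect in your strategy, and it is harmless for the intended application: for the scale one only needs the trace to be defined for a tail of $\alpha<\beta$, and once $\alpha$ exceeds $\max(\supp(p))$ your argument goes through cleanly.
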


\begin{proof}

We only prove the case $\beta=\eta$.
Clearly, $D$ is open.
Let $\gamma \in [\overline{\kappa}_\eta,\lambda)$, $\alpha \in \supp(p)$, and $p \in \mathbb{P}$.
By density, assume $\alpha \in \supp(p)$.
Enumerate $\alpha \in \supp(p)$ decreasingly as $\alpha_0> \cdots > \alpha_{k-1}=\alpha$.
If $\gamma \not \in f_{\alpha_0}^p$, we can extend $p$ so that $\gamma \in f_{\alpha_0}^p$ and map $f_{\alpha_0}^p(\gamma)$ to any appropriate value and make sure that for $\beta>\alpha_0$, $\gamma \in \dom(f_\beta^p)$ (so that $p$ still forms a condition).
Now, suppose that $i$ is the least such that $\gamma_{i+1} \not \in \dom(f_{\alpha_i}^p)$.
Then extend $p$ appropriately so that $\gamma_{i+1} \in \dom(f_{\alpha_i}^p)$.
With this process, we obtain a condition in which $\gamma_{k-1}$ is defined and is in $\dom(f_\alpha^p)$.
Hence, the extended condition is in $D$, and the proof is completed.   

\end{proof}

Fix a limit ordinal $\beta \leq \eta$. In $V[G]$, if $\beta<\eta$, let $\lambda_\beta=\lambda_\beta^p$ when $p \in G$ with $\beta \in \supp(p)$, and if $\beta=\eta$, let $\lambda_\beta=\lambda$.
Note that by genericity
of $V[G]$, $\lambda_\beta$ is well-defined.

For $\gamma \in [\overline{\kappa}_\beta,\lambda_\beta)$, define a function $t_\gamma:\beta \rightarrow \bar{\kappa}_\beta$ in $V[G]$ as follows: for $\alpha<\beta$,
  find $p \in G$ with $p$ lying in the dense set from Lemma \ref{scaledef}.
  Enumerate $\supp(p) \cap (\beta+1) \setminus \alpha$ in decreasing order as $\alpha_0> \dots >\alpha_{k-1}=\alpha$.
Define $\gamma_0, \dots ,\gamma_{k-1}$ as in Lemma \ref{scaledef}, and define $t_\gamma(\alpha)=f^p_\alpha(\gamma_{k-1})$.

To check that $t_\gamma$ is well-defined, suppose $p,q \in G$ satisfy the conditions in Lemma \ref{scaledef}. Find $r \in G$ with $r \leq p,q$. Hence, $((\supp(p) \cup \supp(q)) \cap (\beta+1)) \setminus \alpha \subseteq (\supp(r) \cap (\beta+1)) \setminus \alpha$ and $\alpha_0=\beta$ if $\beta<\eta$. Assume $r \leq^* p+ \langle \mu_0, \dots, \mu_{l-1} \rangle$ and $r \leq^* q+ \langle \tau_0, \dots, \tau_{l^\prime-1} \rangle$. For simplicity, assume $\mu_i$ is an $\beta_i$-object, $\tau_j$ is an $\zeta_j$-object, $\alpha<\beta_0< \dots < \beta_{l-1}$ and $\alpha<\zeta_0< \dots <\zeta_{m-1}$. We will show that $p$ and $r$ compute the same $t_\gamma(\alpha)$-value. A similar argument will show $q$ computes the same $t_\gamma(\alpha)$ as $r$. Simplify further that $l=1$, $\mu=\mu_0$, and $\xi=\beta_0$. Enumerate $(\supp(p) \cap (\beta+1)) \setminus \alpha$ in decreasing order as $\alpha_{k-1}> \dots \alpha_n>\alpha_{n-1} > \dots> \alpha_0=\alpha$, where $\alpha_n>\xi>\alpha_{n-1}$. Then

\begin{align*}
f_{\alpha_{k-1}}^r \circ \dots f_{\alpha_n}^r \circ f_\xi^r \circ f_{\alpha_{n-1}}^r \circ \dots f_{\alpha_0}^r(\gamma)
&= f_{\alpha_{k-1}}^p \circ \dots f_{\alpha_n}^r \circ f_\xi^r \circ f_{\alpha_{n-1}}^p \circ \dots f_{\alpha_0}^p(\gamma) \\
&=f_{\alpha_{k-1}}^p \circ \dots f_{\alpha_n}^p \circ \mu^{-1} \circ \mu \circ f_{\alpha_{n-1}}^p \circ \dots f_{\alpha_0}^p(\gamma) \\
& =f_{\alpha_{k-1}}^p \circ \dots f_{\alpha_n}^p \circ f_{\alpha_{n-1}}^q \circ \dots f_{\alpha_0}^p(\gamma).
\end{align*}

Thus, $p$ and $r$ compute the same $t_\gamma(\alpha)$.
Lemmas \ref{unbdd} and \ref{scale} are parallel to Lemmas
2.29 and 3.12 in \cite{Gitikblow},

\begin{lemma}\label{unbdd} 
In $V[G]$, $\langle t_\gamma : \gamma \in[\bar{\kappa}_\beta,\lambda_\beta)\rangle $ is $<^*$- increasing, where $t<^*t^\prime$ means there is $\alpha<\eta$ such that for all $\alpha^\prime>\alpha$, $t(\alpha^\prime)<t^\prime(\alpha^\prime)$.
\end{lemma}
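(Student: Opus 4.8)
The plan is to prove the stronger local statement that for every pair $\gamma<\gamma'$ in $[\bar{\kappa}_\beta,\lambda_\beta)$, the set of conditions forcing ``$t_\gamma(\alpha)<t_{\gamma'}(\alpha)$ for every sufficiently large $\alpha<\beta$'' is dense below every condition; since this then holds for each pair, $V[G]$ satisfies that the whole sequence is $<^*$-increasing. Fix such a pair and a condition $q$ (with $\beta\in\supp(q)$ and $\gamma,\gamma'\in\dom(f^q_\beta)$ when $\beta<\eta$). The key structural fact I will exploit is that every object $\mu$ is order preserving (clause (4) of the definition of $\OB_\alpha(d)$) and is the identity on $\dom(\mu)\cap\kappa_\alpha$; consequently the Cohen map $f\oplus\mu$ of a one-step extension preserves the order of any two arguments that both lie in $\dom(\mu)$. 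Recalling the well-definedness computation for $t_\gamma$, the value $t_\gamma(\alpha)$ is obtained by composing the Cohen parts $f^p_{\alpha_{k-1}}\circ\dots\circ f^p_{\alpha_0}$ from the top coordinate down to $\alpha$, with the squishing maps $\mu^{-1}$ cancelling against the objects. Thus, once I guarantee that at every coordinate of the trace both the $\gamma$-value and the $\gamma'$-value fall inside the domain of the object added there (so that neither is read off the unconstrained pure Cohen part), the whole composition is order preserving and $t_\gamma(\alpha)<t_{\gamma'}(\alpha)$ follows.

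To secure this along a tail, I would first use density to plant suitable seeds in a fixed lower domain. When $\beta=\eta$ the ordinals $\gamma,\gamma'$ exceed $\bar{\kappa}_\eta=\rho$, hence exceed $\kappa_{\alpha_0}$ at every top coordinate $\alpha_0$, so they sit in the upper, order-preserving part of any object whose domain contains them; I therefore extend $q$ so that $\gamma,\gamma'\in d_{\beta^*}$ for some fixed $\beta^*<\eta$, and then addability (clause (2), $\bigcup_{\beta'<\alpha}d_{\beta'}\subseteq\dom(\mu)$) forces $\gamma,\gamma'$ into the domain of every object added at a coordinate $\alpha>\beta^*$. When $\beta<\eta$ the top coordinate $\beta$ is fixed and $\gamma,\gamma'$ may instead be read off the pure part of $f^q_\beta$; here I first extend the pure Cohen part so that $f^q_\beta(\gamma)<f^q_\beta(\gamma')$, then push these two values into a fixed lower domain $d_{\beta^*}$, after which the same addability argument applies to all coordinates of $(\beta^*,\beta)$. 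In both cases the $\subseteq$-increasing coherence of the domains (clause \ref{domin}) together with the squishing rules of Definition \ref{onestep} keeps the successive trace values inside the object domains as they are mapped downward, so order is preserved at every step. A density-to-forcing argument then yields the conclusion in $V[G]$.

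The main obstacle is exactly this last point: controlling the trace values through the finitely many intermediate support coordinates and verifying that, along an \emph{entire} final segment of $\beta$ rather than merely cofinally often, no trace value ever escapes the object domains into the pure Cohen part (which could reverse the order). This is resolved by the coherence of the domain system and addability, and the argument runs parallel to Lemma~2.29 of \cite{Gitikblow}; the separate bookkeeping for the fixed top coordinate when $\beta<\eta$ versus the moving top coordinate when $\beta=\eta$ is the only genuine case distinction, and in neither case does the value land in the ``gap'' $[\mu(\kappa_\alpha),\kappa_\alpha)$ once the seeds have been placed in a fixed lower domain.
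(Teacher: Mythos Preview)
Your approach for the case $\beta=\eta$ is correct and is exactly what the paper does: extend so that $\gamma,\gamma'$ lie in $d_{\alpha_0}$ for some $\alpha_0$ above the current support, and then use that every addable object at a level $\alpha>\alpha_0$ has $\gamma,\gamma'$ in its domain (the paper phrases this as shrinking the measure-one sets, you phrase it via clause~(2) of addability; these amount to the same thing), so that the trace is computed entirely by order-preserving maps.

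Your treatment of $\beta<\eta$ has a genuine gap, however. You write that you will ``first extend the pure Cohen part so that $f^q_\beta(\gamma)<f^q_\beta(\gamma')$'', but this is not possible: since $\gamma,\gamma'<\lambda^q_\beta<\kappa_\beta$ and $\kappa_\beta+1\subseteq d^q_\beta$, the ordinals $\gamma,\gamma'$ are \emph{already} in $\dom(f^q_\beta)$ the moment $\beta\in\supp(q)$, and a direct extension cannot change existing values of the Cohen part. So the order of $f^q_\beta(\gamma)$ and $f^q_\beta(\gamma')$ is fixed by $q$ and is not under your control. The paper does not spell this case out either (it only says ``similar''), but your proposed mechanism for it does not work as written; the correct reduction is to note that once $\beta\in\supp(q)$ the forcing below $q$ factors, and the scale below $\beta$ is the scale of the smaller poset $\mathbb{P}_{\langle E_{\beta'}\restriction\lambda_\beta:\beta'<\beta\rangle}$ with $\lambda_\beta$ playing the role of $\lambda$, so one simply reruns the $\beta=\eta$ argument inside that factor rather than trying to pass through the fixed map $f^q_\beta$.
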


\begin{proof}
  We prove the case $\beta=\eta$. The case $\beta<\eta$ is similar. Let $\gamma<\gamma^\prime \in [\overline{\kappa}_\eta,\lambda)$.
    We will show the conclusion by a density argument. Let $p\in \mathbb{P}$. We can find $p^\prime \leq p$ such that $\gamma$ and $\gamma^\prime$ are in the domains of the Cohen parts of $p$. Assume $\gamma$ and $\gamma^\prime$ belong to
$\dom(f^{p^\prime}_{\alpha_0})$ where $\max(\supp(p^\prime)) < \alpha_0$

We can also assume that for $\alpha>\alpha_0$, the domain of each object in
$A_\alpha^{p^\prime}$ contains $\gamma$, and $\gamma^\prime$. We will show
\begin{center}
    $p^\prime \Vdash \forall \alpha>\alpha_0(\dot{t}_\gamma(\alpha)<\dot{t}_{\gamma^\prime}(\alpha))$.
\end{center}
This is true because for each $\alpha>\alpha_0$, we can find $q \leq p^\prime$ with $\alpha \in \supp(q)$ and for $\alpha^\prime \in \supp(q) \setminus \alpha$, we use an addable $\alpha^\prime$-object whose domain contains $\gamma$ and $\gamma^\prime$. Every addable object is order-preserving, and by a density argument, we are done.
\end{proof}

In particular, we conclude that in $V[G]$, $2^{\aleph_{\gamma_\eta+\beta}} =\aleph_{\gamma_\eta+\beta+k}$ for $\beta<\eta$, where $k=2$ or $3$, and $2^{\aleph_{\gamma_\eta+\eta}}=\aleph_{\gamma_\eta+\eta+2}$.

We set $\lambda_\alpha$ as $\lambda^p_\alpha$ when $p\in G$ and $\alpha \in \supp(p)$. We have 

\begin{lemma}\label{scale}
In $V[G]$, $\langle t_\gamma : \gamma \in[\overline{\kappa}_\beta,\lambda_\beta)\rangle $ is a scale in $(\prod\limits_{\alpha<\beta}\lambda_\alpha,<_{bd})$. 
\end{lemma}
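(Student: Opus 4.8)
The plan is to combine Lemma \ref{unbdd}, which already shows that $\langle t_\gamma : \gamma \in [\overline{\kappa}_\beta,\lambda_\beta)\rangle$ is $<_{bd}$-increasing, with a cofinality (domination) argument; since a scale is exactly an increasing cofinal sequence, it remains only to prove that the $t_\gamma$ are \emph{cofinal} in $(\prod_{\alpha<\beta}\lambda_\alpha,<_{bd})$. For $\beta<\eta$ I work below a condition $q$ with $\beta\in\supp(q)$ and inside the factor $(\mathbb{P}/q)\restriction\beta$, which may be regarded as $\mathbb{P}_{\langle E_\gamma \restriction \lambda_\beta : \gamma<\beta\rangle}$; for $\beta=\eta$ I work in $\mathbb{P}$ itself with $\lambda_\beta=\lambda$. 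Fix a name $\dot f$ and a condition $p$ forcing $\dot f\in\prod_{\alpha<\beta}\lambda_\alpha$. It suffices to show that the set of $r\le p$ for which there is some $\gamma\in[\overline{\kappa}_\beta,\lambda_\beta)$ with $r\Vdash \dot f <_{bd} t_\gamma$ is dense below $p$; genericity then supplies, in $V[G]$, a $\gamma$ with $\dot f <_{bd} t_\gamma$, and as $\dot f$ is arbitrary this gives cofinality.

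To produce such an $r$ and $\gamma$, first apply the strong Prikry property (Lemma \ref{strongprikry}) together with the integration machinery of Lemma \ref{integrate} to obtain $p^*\le^* p$ with the following feature: for every coordinate $\alpha<\beta$ outside $\supp(p^*)$ and every addable $\mu\in A_\alpha^{p^*}$, the one-step extension $p^*+\mu$ decides $\dot f(\alpha)$, say $p^*+\mu\Vdash \dot f(\alpha)=g_\alpha(\mu)$, where necessarily $g_\alpha(\mu)<\lambda_\alpha(\mu)=s_\alpha(\mu(\kappa_\alpha))^{++}$. Regarding $g_\alpha$ as a function on $A_\alpha^{p^*}$, set $\delta_\alpha := j_{E_\alpha}(g_\alpha)(\mc_\alpha(d_\alpha^{p^*}))$. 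By {\L}o{\'s}'s theorem together with the computation (from the addability sketch) that $j_{E_\alpha}(s_\alpha)(\kappa_\alpha)=\rho$ and $\lambda=\rho^{++}$, and its cut analogue yielding $\lambda_\beta$ when $\beta<\eta$, we get $\delta_\alpha<\lambda_\beta$. Since $\lambda_\beta$ is regular and $\beta<\overline{\kappa}_\beta<\lambda_\beta$, we have $\sup_{\alpha<\beta}\delta_\alpha<\lambda_\beta$, so we may fix $\gamma\in[\max\{\overline{\kappa}_\beta,\sup_{\alpha<\beta}\delta_\alpha\},\lambda_\beta)$.

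The heart of the argument is that for this fixed $\gamma$ and each $\alpha<\beta$ the value $t_\gamma(\alpha)$, viewed as a function of the object added at coordinate $\alpha$, \emph{represents $\gamma$ modulo $E_\alpha$}: using the well-definedness computation preceding Lemma \ref{unbdd} (the composition of the Cohen parts across the coordinates strictly between $\alpha$ and $\beta$), the coherence of the extender sequence, and the identity $\mc_\alpha(d)=(j_{E_\alpha}\restriction d)^{-1}$, one checks that $[\mu\mapsto t_\gamma(\alpha)]_{E_\alpha}=\gamma$ on a set in $E_\alpha(d_\alpha^{p^*})$. Hence, because $\gamma>\delta_\alpha=[g_\alpha]_{E_\alpha}$, the set $B_\alpha:=\{\mu\in A_\alpha^{p^*} : p^*+\mu\Vdash \dot f(\alpha)<t_\gamma(\alpha)\}$ lies in $E_\alpha(d_\alpha^{p^*})$. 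Shrinking the measure-one set at every non-support coordinate $\alpha<\beta$ from $A_\alpha^{p^*}$ to $B_\alpha$ and restricting the collapse parts accordingly yields $r\le^* p^*$, and since the object realized at each such $\alpha$ in the generic is drawn from $B_\alpha$, we obtain $r\Vdash \dot f(\alpha)<t_\gamma(\alpha)$ for every $\alpha<\beta$ outside the finite set $\supp(p^*)$. Thus $r\Vdash \dot f <_{bd} t_\gamma$, which completes the density argument and, with Lemma \ref{unbdd}, the proof.

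The step I expect to be the main obstacle is the representation claim $[\mu\mapsto t_\gamma(\alpha)]_{E_\alpha}=\gamma$: one must track $\gamma$ through the composition of the Cohen functions across the coordinates strictly between $\alpha$ and $\beta$ and verify, via coherence, that the pulled-back index is transported correctly into $\Ult(V,E_\alpha)$. A secondary delicate point is confirming the bound $\delta_\alpha<\lambda_\beta$ in the case $\beta<\eta$, where one must genuinely use that the relevant extenders are cut at $\lambda_\beta$ inside the factor $(\mathbb{P}/q)\restriction\beta$.
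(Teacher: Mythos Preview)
There is a genuine gap in your first step. You assert that strong Prikry together with the integration lemma yields $p^*\le^* p$ such that the \emph{single} one-step extension $p^*+\mu$ at coordinate $\alpha$ already decides $\dot f(\alpha)$. But the strong Prikry property only produces a finite set $I_\alpha$ of coordinates such that extensions by objects from all of $I_\alpha$ decide $\dot f(\alpha)$, and $I_\alpha$ may well contain coordinates \emph{strictly above} $\alpha$. Lemma~\ref{integrate} only amalgamates data living below a fixed level, so it cannot eliminate dependence on objects at levels $\delta>\alpha$. Consequently your function $g_\alpha$ is not well-defined as a function of a single $\alpha$-object $\mu$, and the quantity $\delta_\alpha=j_{E_\alpha}(g_\alpha)(\mc_\alpha(d_\alpha^{p^*}))$ has no meaning as written.

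This is precisely where the paper does nontrivial work. It keeps the full set $I_\alpha$, defines $Y_\alpha(\vec\mu)$ as the decided value, and first bounds out the coordinates in $I_\alpha$ below $\alpha$ by a sup (using that their number is small compared to the regular cardinal $\lambda_\alpha(\mu_\alpha)$). It then passes to $\Ult(V,E_\alpha)$, plugs in $\mc_\alpha(d_\alpha^q)$ at coordinate $\alpha$, and for each $\delta\in I_\alpha\setminus(\alpha+1)$ uses that $j_{E_\alpha}(E_\delta(d_\delta^q))$ is $j_{E_\alpha}(\kappa_\delta)$-complete with $j_{E_\alpha}(\kappa_\delta)>\lambda$; this completeness lets one shrink to measure-one sets $B_\delta^\alpha$ on which the (sup of the) values is a fixed $\gamma_\alpha<\lambda$. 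Only after this step does one have a single ordinal $\gamma_\alpha$ attached to $\alpha$, and then $\gamma=\sup_\alpha\gamma_\alpha$ works. Your outline skips exactly this completeness argument, which is the heart of the cofinality proof.

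Your flagged worry about the representation $[\mu\mapsto t_\gamma(\alpha)]_{E_\alpha}=\gamma$ is real but secondary: the paper avoids computing this class directly by arguing first inside $M_\alpha$ (where the $\alpha$-object is literally $\mc_\alpha(d_\alpha^r)$, so that $\mc_\alpha(d_\alpha^r)(j_{E_\alpha}(\gamma))$ appears naturally) and then reflecting back via elementarity to shrink the measure-one sets in $V$.
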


\begin{proof}

  Again, assume for simplicity that $\beta=\eta$. First, note that for each condition $p$ and $\alpha>\max(\supp(p))$, $\ran (\mc_\alpha(d_\alpha^p))\subseteq \lambda=j(s_\alpha)(\kappa_\alpha)^{++}$. Hence, there is a measure one set of $\mu$ such that $\ran(\mu)\subseteq s_\alpha(\mu(\kappa_\alpha))^{++}=\lambda_\alpha(\mu)$. Hence, the type is correct.

  Let $\dot{h}$ be a name and $p$ be a condition forcing that $\dot{h} \in \prod\limits_{\alpha<\eta} \dot{\lambda}_\alpha$. Suppose now for simplicity that $p$ is pure. For $\alpha<\eta$, let $D_\alpha=\{q: q \mbox{ decides } \dot{h}(\alpha)\}$. Find $q \leq^* p$ witnessing the strong Prikry property for $D_\alpha$, with the finite set of coordinates
  $I_\alpha$, for all $\alpha<\eta$. Assume further that $\alpha \in I_\alpha$. Define $Y_\alpha(\vec{\mu})$ to be the value that $q+ \vec{\mu}$ decides for $\dot{h}(\alpha)$. Note that this is less than $\lambda_\alpha(\mu_\alpha)$, which is a regular cardinal. Now for $\vec{\mu} \in \prod\limits_{\delta \in I_\alpha \setminus \alpha} A^q_\delta$, define $Z_\alpha(\vec{\mu}):=\sup\limits_{\vec{\tau}} (Y(\vec{\tau}^\frown\vec{\mu}))+1$. By a simple counting argument $Z_\alpha(\vec{\mu})<\lambda_\alpha(\mu_\alpha)$, and for $\vec{\mu} \in\prod\limits_{\delta \in I_\alpha \setminus \alpha} A^q_\delta$, we still have $q+\vec{\mu} \Vdash \dot{h}(\alpha)< Z_\alpha(\vec{\mu})$. We have that for $\vec{\mu} \in \prod\limits_{\delta \in I_\alpha \setminus (\alpha+1)}j_{E_\alpha}(A_\delta^q)$,
  
 \begin{center}
  $j_{E_\alpha}(q)+(\langle \mc_\alpha(d_\alpha^q) \rangle^\frown \vec{\mu}) \Vdash j_{E_\alpha}(\dot{h}(\alpha))<\lambda$.
  \end{center}
  
   Since for $\delta>\alpha$, $j_{E_\alpha}(A_\delta^q)$ comes from a measure which is $j_{E_\alpha}(\kappa_\delta)$-complete, and $j_{E_\alpha}(\kappa_\delta)>j_{E_\alpha}(\kappa_\alpha)\geq \lambda$, there are $\gamma_\alpha<\lambda$ and measure one set $B_\delta^\alpha$ for $\delta \in I_\alpha \setminus (\alpha+1)$ such that for $\vec{\mu} \in \prod\limits_{\delta \in I_\alpha \setminus (\alpha+1)}B_\delta^\alpha$,

   \begin{center}
   $j_{E_\alpha}(q)+ (\langle \mc_\alpha(d_\alpha^q) \rangle^\frown \vec{\mu}) \Vdash j_{E_\alpha}(\dot{h}(\alpha))= \gamma_\alpha$.
   \end{center}

%\textcolor{blue}{Enumerate $I_\alpha \setminus \alpha$ in decreasing order as $\alpha_0> \dots > \alpha_{k-1}=\alpha$. For $i=0, \dots, k-1$, define $j^i$ recursively by $j^0=j_{E_{\alpha_0}}$ and $j^{i+1}=j_{E_{\alpha_{i+1}}}\circ j^i$. Define $\mc_{\alpha_j}^i$ recursively by $\mc_{\alpha_j}^0:=\mc_{\alpha_j}(d_{\alpha_j}^q)$ and $\mc_{\alpha_j}^{i+1}:=j_{E{\alpha_{i+j+1}}}(\mc_{\alpha_j}^i)$. Define $Z_{\alpha,i}$ recursively by first let $Z_{\alpha,0}=Z_\alpha$. Let $Z_{\alpha,i+1}$ be a function whose domain is $j^i(\prod\limits_{j>i}A_{\alpha_j}^q)$ (which is just pointwise image). For $\vec{\mu} \in \dom(Z_{\alpha,i+1})$, $Z_{\alpha,i+1}(\vec{\mu})=j_{E{\alpha_i}}(Z_{\alpha,i})(\vec{\mu}^\frown \langle  \mc_{\alpha_{i}}^0,\dots,\mc_{\alpha_0}^i \rangle )$. If $\mu_\alpha$ is the first coordinate of $\vec{\mu}$. By elementarity, for all $i<k-1$, $Z_{\alpha,i+1}(\vec{\mu})<j^i(\lambda_\alpha(\mu_\alpha))$, which is just $\lambda_\alpha(\mu_\alpha)$. Observe that $Z_{\alpha,k}=j_{\alpha_{k-1}}(Z_{\alpha,k-1})(\mc_{\alpha_{k-1}}^0,\dots,\mc_{\alpha_0}^{k-1})$. Hence $Z_{\alpha,k}$, which is just a single value, is less than $\lambda$. Set $\gamma_\alpha=Z_{\alpha,k}$.}

%\textcolor{blue}{By commutativity of embeddings, shrink \textcolor{orange}{measure one} sets $j_{E_\alpha}(A_{\alpha_0}^q), \dots, j_{E_\alpha}(A_{\alpha_{k-2}}^q)$ so that for every $\vec{\mu} \in j_{E_\alpha} (\prod\limits_{j<k-1}A_{\alpha_j}^q)$, ,we have $j_{E_\alpha}(Z_\alpha)(\mc_{\alpha}(d_\alpha^q)^\frown \vec{\mu})=\gamma_\alpha$.}

We run through the process as above for all $\alpha<\eta$. Take $\gamma=\sup_{\alpha<\eta} \gamma_\alpha$. Let $r \leq^* q$ with $\gamma \in \dom(f_0^r)$. Hence, for $\alpha<\eta$, and $\vec{\mu} \in \prod\limits_{\delta \in I_\alpha \setminus (\alpha+1)} j_{E_\alpha}(A_\delta^r) \cap B_\delta^\alpha$,

\begin{center}
$j_{E_\alpha}(r)+(\langle \mc_\alpha(d_\alpha^r) \rangle^\frown \vec{\mu}) \Vdash j_{E_\alpha}(\dot{h}(\alpha))<\mc_\alpha(d_\alpha^r)(j_{E_\alpha}(\gamma)).$
\end{center}

Since the sets of such $\vec{\mu}$ are of measure one, by elementarity we may shrink measure one sets $A_\delta^r$ for $\delta \in I_\alpha \setminus \alpha$ so that every extension of $r$ using objects in $\{A_\delta^r : \delta \in I_\alpha\}$ decides that $\dot{h}(\alpha)<\mu(\gamma)$ where $\mu$ is the object being used in $A_\alpha^r$. Repeat the shrinking process for all $\alpha$ and call the resulting condition $s$. 

We claim that $s \Vdash \dot{h}(\alpha)<\dot{t}_\gamma(\alpha)$ for all $\alpha$. To show this, fix an $\alpha$. Let $s^\prime \leq s$ such that $s^\prime$ decides $\dot{h}(\alpha)$. Assume $s^\prime \leq^* s^*+ \vec{\mu}$ where $\vec{\mu}$ comes from measure one sets $\{ A_\delta^r : \delta \in X\}$ and $X \supseteq I_\alpha$. By the strong Prikry property, $s^{**}:=s^* +(\vec{\mu} \restriction I_\alpha)$ decides $\dot{h}(\alpha)$ to be an ordinal less than $\mu_\alpha(\gamma)$. A straightforward  calculation tells that $s^{**}$ decides $\dot{h}(\alpha)$ to be an ordinal less than $\dot{t}_\gamma(\alpha)$. Hence, we are done.

\end{proof}

To investigate the scale further, note that if $p \in G$ and $\beta \in \supp(p)$, $\lambda_\beta=\rho_\beta^{++}$ for some $\beta$. The exact same argument shows that $\langle t_\gamma : \gamma \in [\overline{\kappa}_\beta,\rho_\beta^+)\rangle$ is a scale in $(\prod\limits_{\alpha<\beta}\rho_\alpha^+,<_{bd})$. Recall that a scale $\langle h_\alpha : \alpha<\chi^+ \rangle$ on $\prod\limits_{\beta<\theta} \theta_\beta$ is {\em very good} if modulo club filter, every $\alpha<\chi^+$ with $\cf(\alpha)>\theta$ is a {\em very good point}, meaning there is a club $C \subseteq \alpha$ of type $\cf(\alpha)$ and $\gamma<\theta$ such that for $\beta_0$ and $\beta_1$ in $C$ with $\beta_0<\beta_1$ and $\xi>\gamma$, $f_{\beta_0}(\xi)<f_{\beta_1}(\xi)$. 

\begin{lemma}
$\langle t_\gamma : \gamma \in [\overline{\kappa}_\beta,\rho_\beta^+)\rangle$ is a very good scale.
\end{lemma}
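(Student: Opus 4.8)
The plan is to deduce very-goodness from the order-preservation of addable objects, which is already the engine behind Lemma \ref{unbdd}. The starting point is a uniform reading of that lemma: if $p \in G$, $\beta \in \supp(p)$, $\alpha^\ast = \max(\supp(p)\cap\beta)$, and $\gamma_0 < \gamma_1$ both lie in $\dom(f^p_{\alpha^\ast})$, then $p$ forces $t_{\gamma_0}(\alpha) < t_{\gamma_1}(\alpha)$ for every $\alpha \in (\alpha^\ast,\beta)$. Indeed, any addable object used at a coordinate $\alpha \in (\alpha^\ast,\beta)$ has domain containing $d^p_{\alpha^\ast} \ni \gamma_0,\gamma_1$ and is order-preserving, so the entire composition of Cohen parts that computes $t_{\gamma_i}(\alpha)$ is order-preserving on the pair. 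Thus to each pair $\gamma_0 < \gamma_1$ we may attach a \emph{threshold} $\tau(\gamma_0,\gamma_1) < \beta$, namely the coordinate below $\beta$ at which $G$ first places them in a common Cohen domain, and very-goodness of a point $\gamma^\ast$ amounts to realizing one such threshold uniformly along a club cofinal in $\gamma^\ast$. Since the eventual-domination structure of $\prod_{\alpha<\beta}\rho_\alpha^+$ only sees a cofinal set of coordinates of order type $\cf(\beta)$, we may assume all thresholds are taken from such a fixed cofinal set, so $\tau$ takes at most $\cf(\beta)$ many values.

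I would first dispose of the easy cofinalities. Fix $\gamma^\ast$ with $\delta := \cf(\gamma^\ast) > \cf(\beta)$ and, in the case $\delta < \overline{\kappa}_\beta$, choose $\alpha^\ast < \beta$ with $\kappa_{\alpha^\ast} > \delta$ and a continuous increasing cofinal sequence $\langle \gamma_\xi : \xi<\delta\rangle$ in $\gamma^\ast$. By density (using that $\beta \in \supp$ forces the next-support bound to be $\lambda_\beta = \rho_\beta^{++} > \gamma^\ast$, so the whole set $\{\gamma_\xi\}$ of size $\delta < \kappa_{\alpha^\ast}$ fits into $\dom(f^p_{\alpha^\ast})$ for a single $p \in G$) we obtain a uniform threshold $\alpha^\ast$: every pair from $C := \{\gamma_\xi : \xi<\delta\}$ is dominated above $\alpha^\ast$, so $\gamma^\ast$ is very good and $C$ even gives the required club directly.

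For the remaining cofinalities the set cannot be squeezed into one Cohen domain, and here I would argue only at the level of thresholds. Along a continuous cofinal $\langle \gamma_\xi : \xi<\delta\rangle$ the thresholds are subadditive, since $t_{\gamma_{\xi_0}}(\alpha) < t_{\gamma_{\xi_1}}(\alpha) < t_{\gamma_{\xi_2}}(\alpha)$ whenever $\alpha$ exceeds both $\tau(\gamma_{\xi_0},\gamma_{\xi_1})$ and $\tau(\gamma_{\xi_1},\gamma_{\xi_2})$; as the consecutive thresholds $\xi \mapsto \tau(\gamma_\xi,\gamma_{\xi+1})$ map into a set of size $\cf(\beta) < \delta = \cf(\delta)$, the monotone sup function $\xi \mapsto \sup_{\zeta<\xi}\tau(\gamma_\zeta,\gamma_{\zeta+1})$ is continuous and cannot be cofinal in $\beta$, so it stabilizes below some $\alpha^\ast < \beta$ on a club $C_0 \subseteq \delta$. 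The main obstacle, and the step I expect to require the most care, is promoting domination of consecutive pairs to domination of \emph{all} pairs from a club while keeping strict inequality through limit stages: for $\xi$ a limit point of $C_0$, subadditivity alone does not bound $\tau(\gamma_{\xi'},\gamma_\xi)$ unless the scale is continuous at $\gamma_\xi$ above $\alpha^\ast$. I would resolve this by thinning $C_0$ to the club of $\xi$ at which $t_{\gamma_\xi}\!\restriction(\alpha^\ast,\beta)$ is the coordinatewise supremum of the earlier $t_{\gamma_{\xi'}}$ — such $\xi$ form a club because each coordinate value lies below the regular cardinal $\rho_\alpha^+$ and the sequence $\langle \gamma_\xi\rangle$ is continuous, so the suprema are attained and, by order-preservation of the object at coordinate $\alpha$ once all earlier $\gamma_{\xi'}$ have entered $d_{\alpha^\ast}$, are strictly dominated by $t_{\gamma_\xi}(\alpha)$. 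Taking $C$ to be the image of this thinned club and the threshold $\alpha^\ast$ then yields $t_{\gamma_0}(\alpha) < t_{\gamma_1}(\alpha)$ for all $\gamma_0 < \gamma_1$ in $C$ and $\alpha \in (\alpha^\ast,\beta)$, which is precisely very-goodness of $\gamma^\ast$; running this for every $\gamma^\ast$ of cofinality exceeding $\cf(\beta)$ completes the proof.
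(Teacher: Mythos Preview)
Your first paragraph and your ``easy case'' are exactly the paper's proof, and they are in fact the \emph{entire} proof: there is no second case. The scale has length $\rho_\beta^+$, and $\rho_\beta = \overline{\kappa}_\beta$ is singular (it is the supremum of the $\kappa_\alpha$ for $\alpha<\beta$). Hence any $\gamma^\ast < \rho_\beta^+$ has regular cofinality $\delta \le \rho_\beta$, and since $\rho_\beta$ is singular this forces $\delta < \overline{\kappa}_\beta$. So every point of cofinality greater than $\cf(\beta)$ already falls under your easy case, and the ``remaining cofinalities'' paragraph is addressing an empty set. The paper simply observes $\eta < \cf(\gamma) < \kappa_\alpha$ for some $\alpha<\eta$, puts a club $C$ of size $\cf(\gamma)$ into a single Cohen domain above the support, and is done.

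For what it is worth, the argument you sketch for the phantom case also has a gap. The step where you thin $C_0$ to the $\xi$ at which $t_{\gamma_\xi}\!\restriction(\alpha^\ast,\beta)$ equals the coordinatewise supremum of the earlier $t_{\gamma_{\xi'}}$ is not justified: continuity of $\xi \mapsto \gamma_\xi$ in the ordinal topology says nothing about continuity of $\xi \mapsto t_{\gamma_\xi}(\alpha)$ at a fixed coordinate $\alpha$, and there is no reason to expect the scale to be continuous at limits in this sense. So even if the case existed, this would need a different idea. But it does not exist, so you can simply delete that paragraph.
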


\begin{proof}
  For simplicity, assume $\beta=\eta$. Let $\gamma<\rho^+$, say $\eta<\cf(\gamma)<\kappa_\alpha$ for some $\alpha<\eta$. Let $C \subseteq (\gamma \setminus \rho$ be a club of order type $\cf(\gamma)$. Let $p \in \mathbb{P}$ be such that $\alpha+1 \in \supp(p)$. Let $\theta=\min(\eta \setminus supp(p))$. Extend $p$ to $p^\prime$ so that $C \subseteq d_\theta^{p^\prime}$. Shrink the measure one set $A_{\alpha+1}^{p^\prime}$ so that the domain of any $\theta$-object in the measure one set contains $C$. Call the final condition $q$. It is easy to see that $q \Vdash \forall \beta_0,\beta_1 \in C(\beta_0<\beta_1 \rightarrow \forall \xi>\theta(f_{\beta_0}(\xi)<f_{\beta_1}(\xi)))$. Hence, the scale is very good.
\end{proof}

\begin{rmk}\label{mainrmk}

\begin{itemize}

\item \label{merge} In  Theorem \ref{main}, it is possible to get the equation $2^{\aleph_{\gamma+\beta}}=\aleph_{\gamma+\beta}^{+n}$ for limit $\beta<\eta$.
To do so, in the forcing describe in section \ref{forcing}, the third and the fourth coordinates will be merged.
For example, we consider $n=2$, in Definition \ref{poset} item (\ref{impurecoll}), instead of having $h_\alpha^0$ and $h_\alpha^1$, we merge to only $h_\alpha \in \Col(\overline{\kappa}_\alpha^+,\rho_\alpha^+)$, and so on.
Then in $V[G]$, for $\beta<\eta$, the value $2^{\aleph_{\gamma+\beta}}$ can be specified, and it will be $\aleph_{\gamma+\beta+2}$.

\item The noise $\gamma$ in $\aleph_{\gamma+\beta}$ and $\aleph_{\gamma+\eta}$ can be removed for several possible $\eta$.
The cause of the noise is from the fact that to prove the Prikry property, we have to make sure that the very first collapse appearing in Definition \ref{poset} is $\Col(\eta^+,<\nu)$ for some $\nu$, and if $\eta$ can be large, then cardinals below $\eta$ remains untouched.
We exemplify as follows.
First, we allow $\eta$, the length of the sequence of strong cardinals, to be any value, the requirement $\eta<\kappa_0$ is not necessary.
If $\eta \geq (\overline{\kappa}_\omega)^{+n}$ is regular, then first force by $\mathbb{P}_0:=\mathbb{P}_{\langle E_\alpha:\alpha<\omega \rangle}$ when $\lambda=(\sup_{n<\omega} \kappa_n)^{+n}$.
Since $\mathbb{P}_0$ has size $\lambda<\kappa_\omega$. 
Hence, in $V^{\mathbb{P}_0}$, $\langle \kappa_\alpha: \alpha \in [\omega,\eta) \rangle$ is still a sequence of strong cardinals, and $\eta$ is still regular.
We can proceed this ``chopping" process as we like.
If $\eta<\bar{\kappa}_\omega$, then modify the forcing to make sure that $\eta$ is preserved.

\end{itemize}

\end{rmk}

\section{acknowledgement}
We are extremely grateful to James Cummings for his guidance and encouragement in completing this project. We also thank Moti Gitik and Omer Ben-Neria for explaining some points. We would like to thank the referees for the comments and suggestions. 

\bibliographystyle{ieeetr}
\bibliography{references}

\textsc{School of Mathematical Sciences, Tel Aviv University, Tel Aviv-Yafo,  Tel Aviv, Israel, 6997801}
\newline \textit{E-mail address}: \texttt{sittinon@mail.tau.ac.il}
\newline \textit{URL}: \texttt{\url{http://www.math.tau.ac.il/~sittinon/}}
    
\end{document}